\newcommand{\rev}{\ensuremath\mathrm{rev}}
\newtheorem{remark}[theorem]{Remark}
\title{The conditioning of block Kronecker $\ell$-ifications of matrix polynomials}
\author{Javier P\'{e}rez\footnotemark[1]}
\begin{document}
\maketitle
\slugger{simax}{xxxx}{xx}{x}{x--x}%slugger should be set to mms, siap, sicomp, sicon, sidma, sima, simax, sinum, siopt, sisc, or sirev
\renewcommand{\thefootnote}{\fnsymbol{footnote}}
\footnotetext[1]{Department of Mathematical Sciences, University of Montana, USA. Email: {\tt javier.perez-alvaro@mso.umt.edu}.}

\renewcommand{\thefootnote}{\arabic{footnote}}

\begin{abstract}
A strong $\ell$-ification of a matrix polynomial $P(\lambda)=\sum A_i\lambda^i$ of degree $d$ is a matrix polynomial $\mathcal{L}(\lambda)$ of degree $\ell$ having the same finite and infinite elementary divisors, and the same number of left and right minimal indices as $P(\lambda)$.
Strong $\ell$-ifications can be used  to transform the polynomial eigenvalue problem associated with $P(\lambda)$ into an equivalent polynomial eigenvalue problem associated with a larger matrix polynomial $\mathcal{L}(\lambda)$ of lower degree. 
Of most interest in applications is $\ell=1$, for which $\mathcal{L}(\lambda)$ receives the name of strong linearization. 
However, there exist some situations, e.g., the preservation of algebraic structures, in which it is more convenient to replace strong  linearizations by other low degree matrix polynomials.
In this work, we investigate the eigenvalue conditioning of $\ell$-ifications from a family of matrix polynomials recently identified and studied by Dopico, P\'erez and Van Dooren, the so-called block Kronecker companion forms.
We compare the conditioning of these $\ell$-ifications with that of the matrix polynomial $P(\lambda)$, and show that they are  about as well conditioned as the  polynomial itself, provided we scale $P(\lambda)$ so that $\max\{\|A_i\|_2\}=1$, and the quantity $\min\{\|A_0\|_2,\|A_d\|_2\}$ is not too small. 
Moreover, under the scaling assumption  $\max\{\|A_i\|_2\}=1$, we show that any block Kronecker companion form, regardless of its degree or block structure, is about as well-conditioned as the  well-known Frobenius companion forms.
Our theory is illustrated by numerical examples.
\end{abstract}
\begin{keywords}
matrix polynomial, polynomial eigenvalue problem, linearization, quadratification, $\ell$-ification, companion form, conditioning, condition number, accuracy
\end{keywords}
\begin{AMS}
65F15, 65F30, 65F35
\end{AMS}

\pagestyle{myheadings}
\thispagestyle{plain}
%\markboth{J. P\'{E}REZ}{}

\section{Introduction}\label{sec:intro}

Finding eigenvalues of matrix polynomials is an important task in scientific computation.
In the present paper, we study the conditioning of solving a polynomial eigenvalue problem by using $\ell$-ifications, and its implications on the accuracy of computed eigenvalues. 

An $n\times n$ \emph{matrix polynomial} takes the form
\begin{equation}\label{eq:poly}
P(\lambda) = \sum_{i=0}^d A_i\lambda^i, \quad \mbox{with} \quad A_0,A_1,\hdots,A_d\in\mathbb{C}^{n\times n}.
\end{equation}
We assume throughout that the polynomial $P(\lambda)$ is \emph{regular}, this is, the scalar polynomial $\det P(\lambda)$ is not identically equal to the zero polynomial. 
When $A_d\neq 0$, we say that $P(\lambda)$ has \emph{degree} $d$, otherwise we say that $P(\lambda)$ has \emph{grade} $d$.
The \emph{polynomial eigenvalue problem (PEP)} associated with $P(\lambda)$ consists in finding scalars $\lambda\in\mathbb{C}$ and nonzero vectors $x,y\in\mathbb{C}^n$ satisfying
\[
P(\lambda)x = 0 \quad \mbox{and} \quad y^* P(\lambda) = 0,
\]
where $(\cdot)^*$ denotes the complex conjugate transpose.
The scalar $\lambda$ is called an \emph{eigenvalue} of $P(\lambda)$, and the vectors $x$ and $y$ are, respectively, the \emph{right and left eigenvectors} of $P(\lambda)$ associated with the eigenvalue $\lambda$.
%The set of all the eigenvalues of $P(\lambda)$ is called the \emph{spectrum} of $P(\lambda)$.
The pairs $(\lambda,x)$ and $(y,\lambda)$ are called, respectively, \emph{right and left eigenpairs} of $P(\lambda)$.
The triple $(y,\lambda,x$) is called an \emph{eigentriple} of $P(\lambda)$.
We assume that the reader has some familiarity with matrix polynomials and polynomial eigenvalue problems.
For those readers not familiar with these concepts, we refer to the classical works \cite{Gantmacher,Lancaster}, or to the recent reference \cite{Index Sum Theorem} and the references therein.

Any numerical algorithm for computing eigenvalues of matrix polynomials  is affected by roundoff errors due to the limitations of floating point arithmetic.
Ideally, we would like to use polynomial eigenvalue solvers that are at least forward stable, this is,  algorithms that are able to find well-conditioned eigenvalues with high relative accuracy
\begin{equation}\label{eq:forward stability}
\dfrac{|\lambda-\widetilde{\lambda}|}{|\lambda|}=\mathcal{O}(u)\kappa_P(\lambda) \quad \quad \lambda\mbox{: exact eigenvalue,} \quad  \quad \widetilde{\lambda}\mbox{: computed eigenvalue}.
\end{equation}
where $u$ is the unit roundoff, and we use the notation $\mathcal{O}(u)$ for any quantity that is upper bounded by $u$ times a modest constant.
In \eqref{eq:forward stability} $\kappa_P(\lambda)$ denotes the conditioning of the eigenvalue $\lambda$ of  the matrix polynomial $P(\lambda)$.
The condition number $\kappa_P(\lambda)$ measures how sensitive is the eigenvalue $\lambda$ to perturbations of the matrix coefficients of $P(\lambda)$ \cite{Tisseur}.
More precisely, if $\lambda$ is a simple, finite, nonzero eigenvalue of a matrix polynomial $P(\lambda)$ as in \eqref{eq:poly}, with corresponding right and left eigenvectors $x$ and $y$,  then the \emph{condition number of $\lambda$} is defined by 
\begin{align*}
\kappa_P(\lambda) := \lim_{\epsilon \rightarrow 0} \sup 
\left\{
\frac{|\Delta \lambda|}{\epsilon |\lambda|} \, : \, \left( \sum_{i=0}^d (A_i+\Delta A_i)(\lambda+\Delta \lambda)^i \right)(x+\Delta x)=0,\right. &\\
\left.\phantom{\frac{|\Delta \lambda|}{\epsilon |\lambda|}} \mbox{with} \, \, \|\Delta A_i\|_2 \leq \epsilon \omega_i
\right\}&,
\end{align*}
where the tolerances $\omega_i$ provide some freedom in how perturbations are measured.
An explicit formula for $\kappa_P(\lambda)$ was given by Tisseur (see \cite[Theorem 5]{Tisseur}):
\begin{equation}\label{eq:condition number}
\kappa_P(\lambda) = \frac{\left(\sum_{i=0}^d |\lambda|^i \omega_i\right)\|x\|_2\|y\|_2}{|\lambda|\cdot |y^*P^\prime(\lambda)x|}.
\end{equation}
Then, we say that a simple, finite, nonzero eigenvalue $\lambda$ of a matrix polynomial $P(\lambda)$ is \emph{well-conditioned} when $\kappa_P(\lambda)\approx 1$.

Of most interest are the choices $\omega_i=\|A\|_2$, for $i=0,1,\hdots,d$, for which we call $\kappa_P(\lambda)$ the \emph{coefficientwise condition number}, and  $\omega_i=\|\begin{bmatrix}
A_0 & A_1 & \cdots & A_d
\end{bmatrix}\|_2$, for $i=0,1,\hdots,d$, for which we call $\kappa_P(\lambda)$ the \emph{normwise condition number}.
We denote the coefficientwise condition number of $\lambda$ by
\[
\mathrm{coeff\,cond}_P(\lambda) = \frac{\left(\sum_{i=0}^d |\lambda|^i \|A_i\|_2 \right)\|x\|_2\|y\|_2}{|\lambda|\cdot |y^*P^\prime(\lambda)x|},
\]
and the normwise condition number of $\lambda$ by 
\[
\mathrm{norm\,cond}_P(\lambda) = \frac{\|\begin{bmatrix} A_0 & \cdots & A_d \end{bmatrix}\|_2 \left(\sum_{i=0}^d|\lambda|^i\right)\|x\|_2\|y\|_2}{|\lambda|\cdot |y^*P^\prime (\lambda) x|}.
\]
Readily from their definitions, it follows $\mathrm{norm\,cond}_P(\lambda)\geq \mathrm{coeff\,cond}_P(\lambda)$.
So, a well-conditioned eigenvalue in the normwise sense is also well-conditioned in the coefficientwise sense, but not the other way around.

A common approach to solving a polynomial eigenvalue problem associated with a matrix polynomial $P(\lambda)$ starts by transforming $P(\lambda)$ into a larger matrix polynomial of  lower degree:
\[
\mathcal{L}(\lambda) = \sum_{i=0}^\ell \mathcal{L}_i\lambda^i \quad \mbox{with} \quad \mathcal{L}_0,\mathcal{L}_1,\hdots,\mathcal{L}_\ell\in\mathbb{C}^{m\times m}
\]
(see, for example, \cite{Robol,Index Sum Theorem,FFP,ell-ifications-DPV,Melman}).
The matrix polynomial $\mathcal{L}(\lambda)$ receives the name of $\ell$-ification of $P(\lambda)$ \cite{Index Sum Theorem}.
More specifically, an \emph{$\ell$-ification} of $P(\lambda)$ is a matrix polynomial $\mathcal{L}(\lambda)$ of degree at most $\ell$ such that 
\[
U(\lambda) \mathcal{L}(\lambda)V(\lambda) = \begin{bmatrix}
P(\lambda) & 0 \\ 0 & I_s
\end{bmatrix},
\]
for some unimodular matrices $U(\lambda)$ and $V(\lambda)$ \footnote{A matrix polynomial is unimodular if its determinant is a nonzero constant (independent of $\lambda$).}, and where  $I_s$  denotes the $s\times s$ identity matrix. 
This definition implies that $P(\lambda)$ and $\mathcal{L}(\lambda)$ have the same finite elementary divisor and, thus, the same finite eigenvalues with the same multiplicities \cite{Index Sum Theorem}. 

The matrix polynomial obtained by reversing the order of the matrix coefficients of $P(\lambda)$, i.e.,
\[
\rev P(\lambda):=\lambda^d P\left(\frac{1}{\lambda}\right)=\sum_{i=0}^d A_{d-i}\lambda^i,
\]
receives the name of the \emph{reversal matrix polynomial} of $P(\lambda)$.
If an $\ell$-ification $\mathcal{L}(\lambda)$ of $P(\lambda)$ satisfies additionally
\[
W(\lambda) \rev \mathcal{L}(\lambda)Z(\lambda) = \begin{bmatrix}
\rev P(\lambda) & 0 \\ 0 & I_s
\end{bmatrix},
\]
for some unimodular matrices $W(\lambda)$ and $Z(\lambda)$, the matrix polynomial $\mathcal{L}(\lambda)$ is called a \emph{strong $\ell$-ification} of $P(\lambda)$. 
The definition of strong $\ell$-ification implies that $P(\lambda)$ and $\mathcal{L}(\lambda)$ have the same finite and infinite elementary divisor and, thus, the same finite and infinite eigenvalues with the same multiplicities \cite{Index Sum Theorem}.

In numerical computations, the most common $\ell$-ifications used are those with $\ell=1$ \cite{NLEIGS,CORK AAA,CORK}.
When $\ell=1$, (strong) $\ell$-ifications receive the name of \emph{(strong) linearizations} \cite{Strong_lin}.
However, there are situations in which it is more convenient to transform $P(\lambda)$ into a matrix polynomial of degree larger than 1. 
For instance, it is known that there exist matrix polynomials with some important algebraic structures for which there are not strong linearizations preserving those structures \cite{Index Sum Theorem}. 
Since the preservation of algebraic structures has been recognized as a key factor for obtaining better and physically more meaningful numerical results \cite{GoodVibrations}, linearizations have been replaced by other low-degree matrix polynomials in some numerical computations.
For example, in \cite{pal_quadratization}, $2$-ifications (better known as quadratifications) are used in combination with doubling algorithms for solving even-degree structured PEPs.

Thegoal of this paper is to analyze the influence of the $\ell$-ification process on the eigenvalue conditioning, and on the accuracy of computed eigenvalues.
The key point is that the eigenvalues of $P(\lambda)$ are computed usually by applying to an $\ell$-ification $\mathcal{L}(\lambda)$ a backward stable algorithm---like the QZ algorithm in the case of linearizations, or doubling algorithms in the case of quadratifications\footnote{No backward stability proof exists so far for doubling algorithms, but in practice they produce small backward errors.}.
So, the computed eigenvalues are the exact eigenvalues of
\[
\mathcal{L}(\lambda)+\Delta \mathcal{L}(\lambda)=\sum_{i=0}^\ell(\mathcal{L}_i+\Delta\mathcal{L}_i)\lambda^i \quad \quad \mbox{with} \quad \quad \|\Delta\mathcal{L}_i\|_2\leq \mathcal{O}(u)\|\mathcal{L}_i\|_2.
\]
As a consequence of the backward stability, the forward error of a computed eigenvalue can be bounded as
\begin{equation}\label{eq:forward error}
\dfrac{|\lambda-\widetilde{\lambda}|}{|\lambda|} \leq \mathcal{O}(u)\,\mathrm{coeff\,cond}_\mathcal{L}(\lambda),
\end{equation}
where we recall that $\mathrm{coeff\,cond}_\mathcal{L}(\lambda)$ denotes the coefficientwise condition number of $\lambda$ as an eigenvalue of the $\ell$-ification $\mathcal{L}(\lambda)$.
However, in view of \eqref{eq:forward stability}, it is natural to look for a bound of the relative error $|\lambda-\widetilde{\lambda}|/|\lambda|$ in terms of the conditioning of the original polynomial $P(\lambda)$.
 This bound can be obtained by rewriting \eqref{eq:forward error} as
 \[
\dfrac{|\lambda-\widetilde{\lambda}|}{|\lambda|} \leq \mathcal{O}(u)\kappa_\mathcal{P}(\lambda) \cdot \dfrac{\mathrm{coeff\,cond}_\mathcal{L}(\lambda)}{\kappa_P(\lambda)}.
\]
Hence, the ratio $\mathrm{coeff\,cond}_\mathcal{L}(\lambda)/\kappa_P(\lambda)$ controls how far the eigensolver based on $\ell$-ification may be from being forward stable.
In view of this, one should use $\ell$-ifications such that $\mathrm{coeff\,cond}_\mathcal{L}(\lambda)\approx \kappa_P(\lambda)$, since, in this ideal situation, one would be able to compute well-conditioned eigenvalues with high relative accuracy.

In this work, we will focus both on  coefficient and normwise condition numbers. 
Specifically, we will study  the ratios
\[
\frac{\kappa_\mathcal{L}(\lambda)}{\kappa_P(\lambda)}=\frac{\mathrm{coeff\,cond}_\mathcal{L}(\lambda)}{\mathrm{coeff\,cond}_P(\lambda)} \quad \quad \mbox{and} \quad \quad \frac{\kappa_\mathcal{L}(\lambda)}{\kappa_P(\lambda)}=\frac{\mathrm{coeff\,cond}_\mathcal{L}(\lambda)}{\mathrm{norm\,cond}_P(\lambda)},
\]
when the $\ell$-ification $\mathcal{L}(\lambda)$ belongs to the family of block Kronecker companion forms (introduced in Section \ref{sec:companion forms}).
We recall that this family includes the  very well-known Frobenius companion forms \eqref{eq:C1} and \eqref{eq:C2}, permuted versions of the famous Fiedler pencils, and generalized Fiedler pencils \cite{vectors,simplified,DTDM10}.
Assuming $P(\lambda)$ is scaled so that $\max_{i=0:d}\{\|A_i\|_2\}=1$, we will show that there exist modest constants $c_1$ and $c_2$ such that
\begin{equation}\label{eq:main1}
\frac{\mathrm{coeff\,cond}_\mathcal{L}(\lambda)}{\mathrm{norm\,cond}_P(\lambda)}\leq c_1 \quad \quad \mbox{and} \quad \quad \frac{\mathrm{coeff\,cond}_\mathcal{L}(\lambda)}{\mathrm{coeff\,cond}_P(\lambda)}\leq \dfrac{c_2}{\min\{\|A_0\|_2,\|A_d\|_2 \}}.
\end{equation}
Additionally, if $\mathcal{R}(\lambda)$ is another block Kronecker companion form, then we will show that there exist modest constants $c_3$ and $c_4$ such that
\begin{equation}\label{eq:main2}
c_3 \leq \frac{\mathrm{coeff\,cond}_\mathcal{R}(\lambda)}{\mathrm{coeff\,cond}_\mathcal{L}(\lambda)} \leq c_4.
\end{equation}
These results are stated in Theorems \ref{thm:main1} and \ref{thm:main2}, which are the main contributions of this work.

Notice that \eqref{eq:main1} implies that well-conditioned eigenvalues in the normwise sense can be computed with high relative accuracy as the eigenvalues of any block Kronecker companion form if we apply a backward stable algorithm to the $\ell$-ification. 
A similar conclusion holds for well-conditioned eigenvalues in the coefficientwise sense, provided that  $\min\{\|A_0\|_2,\|A_d\|_2 \}$ is not too small. 
Moreover, we want to emphasized that \eqref{eq:main2} covers the case when $\mathcal{L}(\lambda)$ is one of the well-known Frobenius companion forms:
\begin{equation}\label{eq:C1}
C_1(\lambda):=\begin{bmatrix}
\lambda A_d+A_{d-1} & A_{d-2} & \cdots & A_1 & A_0 \\
-I_n & \lambda I_n & 0 & \cdots & 0 \\
0 & \ddots & \ddots & \ddots & \vdots \\
\vdots & \ddots & -I_n & \lambda I_n & 0 \\
0 & \cdots & 0 & -I_n & \lambda I_n
\end{bmatrix}
\end{equation}
and
\begin{equation}\label{eq:C2}
C_2(\lambda):=\begin{bmatrix}
\lambda A_d+A_{d-1} & -I_n & 0 & \cdots & 0 \\
A_{d-2} & \lambda I_n & \ddots & \ddots & \vdots \\
\vdots & 0 & \ddots & -I_n & 0 \\
A_1 & \vdots & \ddots & \lambda I_n & -I_n \\
A_0 & 0 & \cdots & 0 & \lambda I_n
\end{bmatrix},
\end{equation}
since $C_1(\lambda)$ and $C_2(\lambda)$ are particular instances of block Kronecker companion forms.
This is quite a surprising result, since \eqref{eq:main2} implies that any block Kronecker companion form, regardless of its degree or block structure, is about as well-conditioned as the Frobenius companion forms  \eqref{eq:C1} and \eqref{eq:C2}.
Hence, block Kronecker companion forms can be used
in the polynomial eigenvalue problem with  similar reliability than Frobenius companion forms.

We begin in Section \ref{sec:Kronecker} by introducing the families of block Kronecker matrix polynomials, block Kronecker $\ell$-ifications (Section \ref{sec:Kronecker ell-ifications}) and block Kronecker companion forms (Section \ref{sec:companion forms}).
In Section \ref{sec:one sided factorization} we establish one-sided factorizations and obtain eigenvector formulas needed for studying effectively the conditioning of block Kronecker $\ell$-ifications.
Section \ref{sec:conditioning1} studies the conditioning of block Kronecker $\ell$-ifications relative to that of the matrix polynomial.
Then, in Section \ref{sec:conditioning2}, we prove that block Kronecker companion forms are about as well conditioned as the original polynomial, provided we scale $P(\lambda)$ so that $\max_{i=1:d}\{\|A_i\|_2\}=1$, and the quantity $\min\{\|A_0\|_2,\|A_d\|_2\}$ is not too small. 
In Section \ref{sec:conditioning3}, we show that under the scaling assumption  $\max_{i=1:d}\{\|A_i\|_2\}=1$  no block Kronecker companion form is better or worse conditioned than any other block Kronecker companion form.
Finally, we present  in Section \ref{sec:experiments} extensive numerical experiments that support our theoretical results.

Throughout the paper, we use the following notation.
We denote by $\mathbb{C}[\lambda]$ the ring of polynomials in the variable $\lambda$ with complex coefficients.
 The set of $m \times n$ matrix polynomials, this is, the set of $m\times n$ matrices with entries in $\mathbb{C}[\lambda]$, is denoted by $\mathbb{C}[\lambda]^{m\times n}$.
 We denote by $I_n$ the $n\times n$ identity matrix, and by $0$ the matrix with all its entries equal to zero, whose size should be clear from the context. 
By $A\otimes B$, we denote the Kronecker product of the matrices $A$ and $B$.

The next lemma will be useful when taking norms of block matrices (see \cite[Lemma 3.5]{BackErrors} and \cite[Proposition 3.1]{tridiagonal}).
\begin{lemma} \label{lemma:norm bound}
For any $p\times q$ block matrix $A=[A_{ij}]$, we have $\max_{ij}\{\|A_{ij}\|_2\}\leq\|A\|_2\leq \sqrt{pq}\max_{ij}\{ \|A_{ij}\|_2\}$.
\end{lemma}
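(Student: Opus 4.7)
The plan is to establish the two inequalities separately, both by elementary manipulations involving block partitions of vectors.

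For the lower bound $\max_{ij}\|A_{ij}\|_2 \leq \|A\|_2$, I would observe that each block $A_{ij}$ is a submatrix of $A$ and can be extracted via multiplication by block-matrix projectors with orthonormal columns: writing $A_{ij} = E_i^T A E_j$, where $E_i$ and $E_j$ are the block rows/columns of the appropriate identity, I get $\|A_{ij}\|_2 \leq \|E_i^T\|_2\|A\|_2\|E_j\|_2 = \|A\|_2$, since $E_i,E_j$ have orthonormal columns. Taking the maximum over $i,j$ yields the bound.

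For the upper bound $\|A\|_2 \leq \sqrt{pq}\max_{ij}\|A_{ij}\|_2$, I would let $M:=\max_{ij}\|A_{ij}\|_2$ and take a vector $x$ conformally partitioned as $x=[x_1^T,\ldots,x_q^T]^T$. Then the $i$-th block of $Ax$ is $\sum_j A_{ij}x_j$, and the triangle inequality gives $\|(Ax)_i\|_2 \leq M\sum_{j=1}^q \|x_j\|_2$. Applying Cauchy–Schwarz to the sum bounds it by $\sqrt{q}\bigl(\sum_j\|x_j\|_2^2\bigr)^{1/2} = \sqrt{q}\,\|x\|_2$. Squaring and summing over $i$ then yields $\|Ax\|_2^2 \leq pq\,M^2\|x\|_2^2$, hence $\|A\|_2 \leq \sqrt{pq}\,M$.

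No step here looks like a serious obstacle; the proof is a short exercise in unpacking the block structure and using Cauchy–Schwarz once in each direction. The only minor pitfall is being careful that the two factors of $\sqrt{p}$ and $\sqrt{q}$ come from different sources (the outer sum over block rows and the inner Cauchy–Schwarz over block columns, respectively), so neither can be sharpened by a single application of the inequality.
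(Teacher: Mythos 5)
Your proof is correct: the lower bound via extracting each block as $E_i^T A E_j$ with orthonormal-column selector matrices, and the upper bound via the blockwise triangle inequality plus one Cauchy--Schwarz over the $q$ block columns and summing over the $p$ block rows, is exactly the standard argument, and it works equally well when the blocks have unequal sizes. The paper itself does not prove this lemma but simply cites it (from Higham, Li and Tisseur, and Bueno et al.), so there is nothing to contrast with; your write-up fills that gap correctly.
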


\section{Block Kronecker matrix polynomials}
\label{sec:Kronecker}
A PEP can be transformed into an equivalent PEP associated with a larger matrix polynomial of lower degree (typically of degree 1 or 2) by using strong $\ell$-ifications \cite{Index Sum Theorem}.
Several approaches to constructing strong $\ell$-ifications have been introduced in the last years \cite{Robol,Index Sum Theorem,FFP,ell-ifications-DPV,Melman}.
In this work,  we focus on the strong $\ell$-ifications that belong to the family of block Kronecker matrix polynomials \cite{ell-ifications-DPV}.

We recall the family of block Kronecker matrix polynomials in Definition \ref{def:Kronecker-poly}. 
But, first, we introduce the following two matrix polynomials which are important for the definition and properties of this family.
\begin{equation}\label{eq:L}
L_k(\lambda) := \begin{bmatrix}
-1 & \lambda  & 0 & \cdots & 0 \\
0 & -1 & \lambda & \ddots & \vdots \\
\vdots & \ddots & \ddots & \ddots & 0\\
0& \cdots & 0 & -1 & \lambda
\end{bmatrix} \in\mathbb{C}[\lambda]^{k\times (k+1)}
\end{equation}
and
\begin{equation}\label{eq:Lambda}
\Lambda_k(\lambda) := \begin{bmatrix}
\lambda^k & \lambda^{k-1} & \cdots & \lambda & 1 
\end{bmatrix}^T \in\mathbb{C}[\lambda]^{(k+1)\times 1}.
\end{equation}
We observe that $L_k(\lambda) \Lambda_k(\lambda) = 0$, as this will be important in future sections.

\begin{definition}[Block Kronecker matrix polynomials]\label{def:Kronecker-poly}
An \emph{$(\epsilon,n,\eta,m)$-block Kronecker degree-$\ell$ matrix polynomial}, or simply a \emph{block Kronecker matrix polynomial}, is a degree-$\ell$ matrix polynomial  of the form
\[
\mathcal{L}(\lambda)=\left[ \begin{array}{c|c}
M(\lambda) & L_\eta(\lambda^\ell)^T\otimes I_m \\ \hline
L_\epsilon(\lambda^\ell)\otimes I_n & \phantom{\Big{(}} 0 \phantom{\Big{)}}
\end{array}\right],
\]
where $M(\lambda)\in\mathbb{C}[\lambda]^{(\eta+1)n\times(\epsilon+1)n}$ is an arbitrary grade-$\ell$ matrix polynomial, and where $L_k(\lambda)$ and $\Lambda_k(\lambda)$ are defined, respectively, in \eqref{eq:L} and \eqref{eq:Lambda}.
\end{definition}

A fundamental property of a block Kronecker matrix polynomial is that it is a strong $\ell$-ification of a certain $m\times n$ matrix polynomial.

\begin{theorem}{\rm \cite[Theorem 5.5]{ell-ifications-DPV}}\label{thm:block Kronecker poly}
The block Kronecker matrix polynomial \eqref{eq:block Kronecker poly} is a strong $\ell$-ification of the $m\times n$ matrix polynomial
\begin{equation}\label{eq:Q}
Q(\lambda):=(\Lambda_\eta(\lambda^\ell)^T\otimes I_m)M(\lambda)(\Lambda_\epsilon(\lambda^\ell)\otimes I_n),
\end{equation}
considered as a matrix polynomial of grade $\ell(\epsilon+\eta+1)$.
\end{theorem}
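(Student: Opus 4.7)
The plan is to exhibit explicit unimodular matrices reducing $\mathcal{L}(\lambda)$ to $\mathrm{diag}(Q(\lambda),I_s)$ with $s=\epsilon n+\eta m$, and then to run the same argument on $\rev_\ell\mathcal{L}(\lambda)$. The motivating observation is the one-sided factorization
\[
Q(\lambda)=\bigl[\widehat\Lambda_\eta(\lambda^\ell)^T\ 0\bigr]\,\mathcal{L}(\lambda)\,\begin{bmatrix}\widehat\Lambda_\epsilon(\lambda^\ell)\\ 0\end{bmatrix},
\]
which follows from the kernel identities $\bigl(L_k(\mu)\otimes I\bigr)\bigl(\Lambda_k(\mu)\otimes I\bigr)=0$. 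The whole argument then rests on the fact that $L_k(\mu)$ admits an explicit unimodular completion: because $L_k(\mu)$ is bidiagonal with $-1$'s on its main diagonal, back-substitution produces $V_k(\mu)\in\mathbb{C}[\mu]^{(k+1)\times(k+1)}$ with $\det V_k(\mu)=(-1)^k$, $L_k(\mu)V_k(\mu)=[I_k,\,0]$, and last column equal to $\Lambda_k(\mu)$. Taking $U_k(\mu):=V_k(\mu)^T$ gives the dual factorization $U_k(\mu)L_k(\mu)^T=[I_k\ \ 0]^T$ with last row $\Lambda_k(\mu)^T$. Substituting $\mu=\lambda^\ell$ and tensoring with $I_n$ or $I_m$ yields unimodular $\widehat V_\epsilon(\lambda)$ and $\widehat U_\eta(\lambda)$ with the analogous properties for $\widehat L_\epsilon=L_\epsilon(\lambda^\ell)\otimes I_n$ and $\widehat L_\eta^T$.

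With these in hand, reducing $\mathcal{L}(\lambda)$ takes four elementary steps. Left-multiplying by $\mathrm{diag}(\widehat U_\eta,I_{\epsilon n})$ turns the $(1,2)$ block into $\bigl[\begin{smallmatrix}I_{\eta m}\\ 0\end{smallmatrix}\bigr]$ and produces $\widehat\Lambda_\eta(\lambda^\ell)^T M(\lambda)$ as the last $m$ rows of the $(1,1)$ block. Right-multiplying by $\mathrm{diag}(\widehat V_\epsilon,I_{\eta m})$ turns the $(2,1)$ block into $[I_{\epsilon n},0]$ and, because the last $n$ columns of $\widehat V_\epsilon$ are $\widehat\Lambda_\epsilon$, places $Q(\lambda)=\widehat\Lambda_\eta(\lambda^\ell)^T M(\lambda)\widehat\Lambda_\epsilon(\lambda^\ell)$ into the bottom-right corner of the $(1,1)$ block. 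The two identity blocks so produced are then used as pivots, via elementary block row and column operations, to kill every remaining polynomial block; a final block permutation yields $\mathrm{diag}(Q(\lambda),I_{\epsilon n+\eta m})$, establishing the $\ell$-ification property.

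For the strong condition I would show that $\rev_\ell\mathcal{L}(\lambda)$ is, up to unimodular equivalence by block permutations and a sign change, itself a block Kronecker matrix polynomial with middle block $(R_{\eta+1}\otimes I_m)\,\rev_\ell M(\lambda)\,(R_{\epsilon+1}\otimes I_n)$, where $R_j$ denotes the $j\times j$ reverse permutation. The only nontrivial input is the identity $\lambda^\ell L_k(\lambda^{-\ell})=-R_k L_k(\lambda^\ell)R_{k+1}$, verified directly on the bidiagonal pattern; tensoring with $I_m,I_n$ and absorbing the $R$-factors and sign into unimodular pre- and post-multipliers exhibits the desired block Kronecker form. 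Applying the reduction of the previous paragraph then identifies $\rev_\ell\mathcal{L}$ as an $\ell$-ification of $\widehat\Lambda_\eta(\lambda^\ell)^T(R_{\eta+1}\otimes I_m)\rev_\ell M(\lambda)(R_{\epsilon+1}\otimes I_n)\widehat\Lambda_\epsilon(\lambda^\ell)$, and using $R_{k+1}\Lambda_k(\lambda^\ell)=\lambda^{k\ell}\Lambda_k(\lambda^{-\ell})$ together with $\rev_\ell M(\lambda)=\lambda^\ell M(\lambda^{-1})$ this expression collapses to $\lambda^{\ell(\epsilon+\eta+1)}Q(\lambda^{-1})=\rev_{\ell(\epsilon+\eta+1)}Q(\lambda)$. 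I expect the main obstacle to be the careful bookkeeping of signs, permutations, and grades in this reversal argument; the reductions themselves are entirely elementary once the unimodular completion of $L_k$ is in hand.
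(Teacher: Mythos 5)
Your argument is correct, but it is worth pointing out that the paper itself does not prove this statement: Theorem \ref{thm:block Kronecker poly} is imported verbatim from \cite[Theorem 5.5]{ell-ifications-DPV}, where it is established through the theory of dual minimal bases (the blocks $L_k(\lambda^\ell)\otimes I$ and $\Lambda_k(\lambda^\ell)^T\otimes I$ form a dual pair, and minimal bases can be completed to unimodular matrices). Your proof is a self-contained, elementary instantiation of that same strategy: the back-substitution construction of $V_k(\mu)$ with $L_k(\mu)V_k(\mu)=[I_k \;\, 0]$, $\det V_k=(-1)^k$ and last column $\Lambda_k(\mu)$ is exactly the explicit unimodular completion that the abstract minimal-bases machinery guarantees, and your four-step reduction (clean the $(1,2)$ and $(2,1)$ blocks, then pivot off the resulting identities, then permute) is valid because the zero $(2,2)$ block ensures the pivoting operations never disturb the already-cleaned structure. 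The reversal step also checks out: the identities $\lambda^\ell L_k(\lambda^{-\ell})=-R_kL_k(\lambda^\ell)R_{k+1}$ and $R_{k+1}\Lambda_k(\lambda^\ell)=\lambda^{k\ell}\Lambda_k(\lambda^{-\ell})$ are correct, the sign and permutation factors are absorbed into constant (hence unimodular) multipliers, and the resulting expression collapses to $\lambda^{\ell(\epsilon+\eta+1)}Q(\lambda^{-1})$, i.e.\ the reversal of $Q$ with respect to the grade $\ell(\epsilon+\eta+1)$ prescribed in the statement, which is precisely what the definition of strong $\ell$-ification requires. What your route buys over the citation is explicitness: the unimodular transformations are written down concretely, the rectangular ($m\neq n$) case is handled with no extra effort, and the same completions underlie the one-sided factorizations of Section \ref{sec:one sided factorization}; what the cited minimal-bases argument buys is generality, since it covers perturbed and non-companion variants of these $\ell$-ifications without redoing the bookkeeping.
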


In practice, the matrix polynomial $Q(\lambda)$ in the left-hand-side of \eqref{eq:Q}  is given, and one would like to find a matrix polynomial $M(\lambda)$ satisfying \eqref{eq:Q}.
This inverse problem has been addressed in \cite{ell-ifications-DPV}.
We summarize the main results in the following section, focusing on the square case, that is, the case when $m=n$.

\subsection{Block Kronecker $\ell$-ifications for a prescribed matrix polynomial}\label{sec:Kronecker ell-ifications}
%The starting point is a nonzero natural number $\ell$ and a prescribed matrix polynomial $P(\lambda)$ with degree $d$.
%The goal is to construct a strong $\ell$-ification of $P(\lambda)$ that belongs to the family of block Kronecker matrix polynomials.
%We assume that the degree $d$ is a multiple of $\ell$, that is, $d=k\ell$,  for some $k$. 
%When $\ell=1$, notice that this assumption is automatically satisfied.

In this section, we are given an $n\times n$ matrix polynomial $P(\lambda)$ as in \eqref{eq:poly} of degree $d$, and a nonzero natural number $\ell$.
We assume that $d$ is divisible by $\ell$ (notice that this assumption is automatically satisfied when $\ell=1$).
Our goal is to construct strong $\ell$-ifications for $P(\lambda)$ by using block Kronecker matrix polynomials.

The starting point of the construction is to write $k:=d/\ell$ as $k=\epsilon+\eta+1$, for some nonnegative integers $\epsilon$ and $\eta$.
Then, from Theorem \ref{thm:block Kronecker poly}, we see that to get a strong $\ell$-ification for $P(\lambda)$ from a $(\epsilon,n,\eta,n)$-block Kronecker matrix polynomial, we must solve
\begin{equation}\label{eq:P}
(\Lambda_\eta(\lambda^\ell)^T\otimes I_n)M(\lambda)(\Lambda_\epsilon(\lambda^\ell)\otimes I_n) = P(\lambda),
\end{equation}
for the matrix polynomial $M(\lambda)$ of grade $\ell$.
Solving \eqref{eq:P} is always possible because this equation is consistent for every $n\times n$ matrix polynomial $P(\lambda)$.
The consistency of \eqref{eq:P} can be easily established as follows.
Based on the coefficients of $P(\lambda)$, we introduce the following grade-$\ell$ matrix polynomials
\begin{align}\label{eq:B_poly}
\begin{split}
&B_1(\lambda) :=  A_\ell \lambda^\ell +A_{\ell-1}\lambda^{\ell-1}+\cdots +  A_1\lambda + A_0, \\
&B_j(\lambda) :=  A_{\ell j}\lambda^\ell+A_{\ell j-1}\lambda^{\ell-1}+\cdots +  A_{\ell(j-1)+1}\lambda,\quad \mbox{for }j=2,\hdots,k.
\end{split}
\end{align}
Notice that if $P(\lambda)$ has degree $d$, i.e., $A_d\neq 0$, then $B_k(\lambda)$ has degree $\ell$.
Moreover, the polynomials $B_i(\lambda)$ satisfy the equality
\begin{equation}\label{eq:relation_B}
P(\lambda) = \lambda^{k(\ell-1)}B_k(\lambda)+\lambda^{k(\ell-2)}B_{k-1}(\lambda)+\cdots + \lambda^\ell B_2(\lambda)+B_1(\lambda).
\end{equation}

Then, from \eqref{eq:relation_B}, we can easily verify that the matrix polynomial
\begin{equation}\label{eq:M0}
M_{\epsilon,\eta}(\lambda;P):=
\begin{bmatrix}
B_k(\lambda) & B_{k-1}(\lambda) & \cdots & B_{\eta+1}(\lambda)\\
0 & \cdots & 0 & \vdots \\
\vdots & \ddots & \vdots & B_2(\lambda) \\
0 & \cdots & 0 & B_1(\lambda)
\end{bmatrix}
\end{equation}
is a solution of \eqref{eq:P}.
Hence, we have established the consistency of \eqref{eq:P}.
Observe that $M_{\epsilon,\eta}(\lambda;P)$ has degree $\ell$ when $P(\lambda)$ has degree $d$.

The matrix polynomial $M_{\epsilon,\eta}(\lambda;P)$  is not the only solution of \eqref{eq:P}; see \cite[Theorem 5.9]{ell-ifications-DPV}.
We recall in Theorem \ref{thm:characterization of solutions} two characterizations of the set of degree-$\ell$ solutions of \eqref{eq:P}.
Part (ii) in Theorem \ref{thm:characterization of solutions} gives a close formula for any solution of \eqref{eq:P}, while part (iii) shows how the coefficients of $P(\lambda)$ are distributed along the block anti-diagonals of any solution of \eqref{eq:P}.
\begin{theorem}\label{thm:characterization of solutions}
Let $P(\lambda)$ as in \eqref{eq:poly} be an $n\times n$ matrix polynomial of degree $d$. 
Assume $d$ is divisible by $\ell$, and set $k=d/\ell$.
Let $\epsilon$ and $\eta$ be nonnegative integers such that $\epsilon+\eta+1=k$. 
Then, the following conditions are equivalent.
\begin{itemize}
\item[\rm (i)] The degree-$\ell$ matrix polynomial $M(\lambda)$ satisfies \eqref{eq:P}.
\item[\rm (ii)] The matrix polynomial $M(\lambda)$ is of the form 
\begin{align}\label{eq:charac_BK}
\begin{split}
M(\lambda) = M_{\epsilon,\eta}(\lambda;P)+
&\left( \lambda \begin{bmatrix} 0 \\ D(\lambda) \end{bmatrix}+B \right)\left(L_\epsilon(\lambda^\ell)\otimes I_n\right)\\ &\quad\quad  +\left(L_\eta(\lambda^\ell)^T \otimes I_n\right)
\left( \lambda \begin{bmatrix} 0 & -D(\lambda) \end{bmatrix}+C\right),
\end{split}
\end{align}
for some matrices $B\in\mathbb{C}^{(\eta+1)n\times \epsilon n}$ and $C\in\mathbb{C}^{\eta n\times (\epsilon+1)n}$ and some matrix polynomial $D(\lambda)\in\mathbb{C}[\lambda]^{\eta n\times \epsilon n}$ of grade $\ell-2$, and where $M_{\epsilon,\eta}(\lambda;P)$ has been defined in \eqref{eq:M0}.
\item[\rm (iii)] If we consider  $M(\lambda)=\sum_{t=0}^\ell M_t\lambda^t$ as an $(\eta+1)\times( \epsilon+1)$ block matrix polynomial with $n\times n$ block entries, denoted by $[M(\lambda)]_{ij} = \sum_{t=0}^\ell [M_t]_{ij}\lambda^t$, then the matrix polynomial $M(\lambda)$ satisfies
 \begin{equation}\label{eq:condition_coeff1}
   \sum_{i+j=t+1} [M_\ell]_{ij} + \sum_{i+j=t} [M_0]_{ij}=A_{d-t\ell}, \quad \mbox{for $t=0,1,\hdots,k$,}
 \end{equation}
 and
 \begin{equation}\label{eq:condition_coeff2}
 \sum_{i+j=s+1}[M_{\ell-t}]_{ij} = A_{d-s\ell-t} \quad \mbox{for $s=0,1,\hdots,k-1$, and $t=1,\hdots,\ell-1$.}
 \end{equation}
%\begin{align}\label{eq:condition}
%\begin{split}
%&P_{d-k\ell} = \anti(M_\ell,k+1)+\anti(M_0,k), \\
%&P_{d-k\ell-t} = \anti(M_{\ell-t},k+1), \quad \mbox{for }t=1,2,\hdots,\ell-1,
%\end{split}
%\end{align}
%for $k=0,1,\hdots .$
\end{itemize}
\end{theorem}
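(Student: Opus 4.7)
The plan is to prove (i) $\Leftrightarrow$ (iii) first, by coefficient matching, and then (i) $\Leftrightarrow$ (ii) by characterizing the kernel of the evaluation map
\[
\Phi: M(\lambda) \mapsto (\Lambda_\eta(\lambda^\ell)^T \otimes I_n)\,M(\lambda)\,(\Lambda_\epsilon(\lambda^\ell)\otimes I_n)
\]
restricted to matrix polynomials of degree at most $\ell$.

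For (i) $\Leftrightarrow$ (iii), I would exploit that the $i$-th block of $\Lambda_k(\lambda^\ell)$ is $\lambda^{(k+1-i)\ell}I_n$, which gives
\[
\Phi(M)(\lambda)=\sum_{i,j,t}\lambda^{(k+1-(i+j))\ell+t}\,[M_t]_{ij}.
\]
Matching the coefficient of $\lambda^r$ with the corresponding $A_r$ in $P(\lambda)$, each $r$ not divisible by $\ell$ is produced by a unique pair $(t,i+j)$, giving \eqref{eq:condition_coeff2}, while each $r=s\ell$ is produced by exactly two pairs (one with $t=0$ and one with $t=\ell$, on consecutive block anti-diagonals), giving \eqref{eq:condition_coeff1}.

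For (ii) $\Rightarrow$ (i), the identity $L_k(\lambda^\ell)\Lambda_k(\lambda^\ell)=0$ ensures that both correction terms in \eqref{eq:charac_BK} lie in $\ker\Phi$, so $\Phi(M)=\Phi(M_{\epsilon,\eta}(\lambda;P))=P(\lambda)$, the last equality being the consistency argument recorded just before the theorem. A second point to verify is that the candidate $M(\lambda)$ in \eqref{eq:charac_BK} has degree at most $\ell$, even though each correction term can a priori contribute terms of degree up to $2\ell-1$; the coupling between the two correction terms via the common matrix polynomial $D(\lambda)$ (appearing once as $+D$ and once as $-D$), together with the staircase pattern of $L_\epsilon(\lambda^\ell)$ and $L_\eta(\lambda^\ell)^T$, produces the required block-by-block cancellation of the coefficients of $\lambda^{\ell+1},\dots,\lambda^{2\ell-1}$.

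The main obstacle is (i) $\Rightarrow$ (ii). Given a degree-$\ell$ solution $M(\lambda)$, set $N(\lambda):=M(\lambda)-M_{\epsilon,\eta}(\lambda;P)$, which is a degree-$\ell$ element of $\ker\Phi$. Applying the already-proved (i) $\Leftrightarrow$ (iii) equivalence to $N(\lambda)$ with right-hand side zero forces each block anti-diagonal sum of the coefficient matrices $N_0,N_1,\dots,N_\ell$ to vanish (with the two-term overlap between $N_0$ and $N_\ell$ on consecutive anti-diagonals). These vanishing block anti-diagonal sums are exactly the conditions that parameterize matrices of the form \eqref{eq:charac_BK}: the zero-sum conditions on the intermediate coefficients $N_1,\dots,N_{\ell-1}$ can be resolved by an anti-diagonal construction that reads off the coefficients $D_0,\dots,D_{\ell-2}$ of $D(\lambda)$, while the coupled zero-sum conditions on $N_0$ and $N_\ell$, after absorbing the leading and trailing coefficients of $D(\lambda)$, fix the constant matrices $B$ and $C$. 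A final block-by-block verification then confirms that the constructed triple $(B,C,D(\lambda))$ reproduces $N(\lambda)$, placing $M(\lambda)$ in the form \eqref{eq:charac_BK}.
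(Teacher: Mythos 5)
You should know at the outset that the paper does not prove this theorem: it is recalled verbatim from Dopico, P\'erez and Van Dooren \cite{ell-ifications-DPV} (Theorem 5.9 there), so your attempt can only be judged on its own merits, not against an in-paper argument. On those merits, two of your three implications are in good shape. The coefficient matching for (i) $\Leftrightarrow$ (iii) is right: with your fully $1$-based block indices the coefficient of $\lambda^{(k-t)\ell}$ in $\Phi(M)$ is $\sum_{i+j=t+1}[M_0]_{ij}+\sum_{i+j=t+2}[M_\ell]_{ij}$ and the coefficient of $\lambda^{(k-s)\ell-t}$ with $1\le t\le \ell-1$ is $\sum_{i+j=s+2}[M_{\ell-t}]_{ij}$; these agree with \eqref{eq:condition_coeff1}--\eqref{eq:condition_coeff2} only after aligning with the anti-diagonal labelling used in the statement (which in effect labels the top-left block as anti-diagonal $1$), so fix that bookkeeping explicitly. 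Your (ii) $\Rightarrow$ (i) step is also correct: both corrections in \eqref{eq:charac_BK} are killed by the outer factors because $L_k(\lambda^\ell)\Lambda_k(\lambda^\ell)=0$, and the degree claim is genuine: writing $L_\epsilon(\lambda^\ell)\otimes I_n$ and $L_\eta(\lambda^\ell)^T\otimes I_n$ as a constant part plus $\lambda^\ell$ times a shift, the only contributions of degree exceeding $\ell$ are $\lambda^{\ell+1}D_{i-1,j-1}(\lambda)$ from the first correction and $-\lambda^{\ell+1}D_{i-1,j-1}(\lambda)$ from the second, in the same block position, so they cancel exactly as you claim.

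The gap is in (i) $\Rightarrow$ (ii), which is the mathematical core of the theorem: the sentence ``the zero-sum conditions \dots can be resolved by an anti-diagonal construction \dots a final block-by-block verification then confirms'' asserts precisely what has to be proved for the homogeneous problem, and you never exhibit the construction. It does go through, and you should record why. Blockwise, the two corrections in \eqref{eq:charac_BK} sum to $[N(\lambda)]_{ij}=-B_{ij}-C_{ij}+\lambda^\ell\,(B_{i,j-1}+C_{i-1,j})+\lambda\,(D_{i,j-1}(\lambda)-D_{i-1,j}(\lambda))$, with out-of-range blocks read as zero; traversing each block anti-diagonal and solving these equations successively for the unknown blocks of $B$, $C$ and the coefficients of $D(\lambda)$, the vanishing of the (coupled) anti-diagonal sums of $N_0,\dots,N_\ell$ is exactly the compatibility condition that makes the final equation of each telescoping solve hold automatically. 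A useful corroboration, and an alternative way to finish, is a dimension count: since $\Phi$ is onto (the consistency argument via $M_{\epsilon,\eta}(\lambda;P)$ in \eqref{eq:M0} works for every grade-$d$ right-hand side), its kernel among grade-$\ell$ polynomials has block dimension $(\ell+1)(\eta+1)(\epsilon+1)-(k\ell+1)=(\ell+1)\eta\epsilon+\eta+\epsilon$, which is exactly the number of free $n\times n$ blocks in $(B,C,D(\lambda))$, namely $(\eta+1)\epsilon+\eta(\epsilon+1)+(\ell-1)\eta\epsilon$ (vacuous $D$ when $\ell=1$, as it should be); combined with injectivity of the parameterization this also yields the characterization. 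Until either the telescoping construction or the count-plus-injectivity argument is written out, your treatment of the hardest implication is a plan rather than a proof.
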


Combining Theorems \ref{thm:block Kronecker poly} and \ref{thm:characterization of solutions} allows us to obtain infinitely many strong $\ell$-ifications of the prescribed matrix polynomial $P(\lambda)$.
This motivates the following definition.
\begin{definition}[Block Kronecker $\ell$-ification of $P(\lambda)$]
Given an $n\times n$ matrix polynomial $P(\lambda)$ as in \eqref{eq:poly} with degree $d$, we will refer to any block Kronecker matrix polynomial
\begin{equation}\label{eq:block Kronecker poly}
\mathcal{L}(\lambda)=\left[ \begin{array}{c|c}
M(\lambda) & L_\eta(\lambda^\ell)^T\otimes I_n \\ \hline
L_\epsilon(\lambda^\ell)\otimes I_n & \phantom{\Big{(}} 0 \phantom{\Big{)}}
\end{array}\right] \quad \quad \mbox{with} \quad \quad M(\lambda)=\sum_{i=0}^\ell M_i\lambda^i,
\end{equation}
where $M(\lambda)$  satisfies \eqref{eq:P}, as a \emph{block Kronecker $\ell$-ification} of $P(\lambda)$.
\end{definition}

In the following section, we identify an interesting subset of the family of block Kronecker $\ell$-ifications.

\subsection{Block Kronecker companion $\ell$-ifications and Frobenius-like companion forms}\label{sec:companion forms}
In practice, the most important $\ell$-ifications for an $n\times n$ matrix polynomial $P(\lambda)$ as in \eqref{eq:poly} are  the so-called \emph{companion $\ell$-ifications} or \emph{companion forms} \cite{Index Sum Theorem}.
These $\ell$-ifications are introduced in Definition \ref{def:companion form}.
\begin{definition}[Companion form]\label{def:companion form}
Consider an $n\times n$ matrix polynomial $P(\lambda)$ as in \eqref{eq:poly} with degree $d$.
An $\ell$-ification $\mathcal{L}(\lambda)=\sum_{i=0}^\ell \mathcal{L}_i\lambda^i$ of the matrix polynomial $P(\lambda)$ is called a \emph{companion form (or companion $\ell$-ification) of $P(\lambda)$} if, considering the matrix coefficients $\mathcal{L}_i$ as block matrices, each block entry of $\mathcal{L}_i$ is either $0_n$, $I_n$ or $A_i$, for some $i\in\{0,\hdots,d\}$.
\end{definition}

We are aware that Definition \ref{def:companion form} is not the most general definition of companion form considered in the literature (see, for example, \cite[Definition 5.1]{Index Sum Theorem}), but it is the easiest to work with.
It is also worth observing that Definition \ref{def:companion form} implies that companion forms can be constructed from the coefficients of $P(\lambda)$ without performing any arithmetic operation, useful property in numerical computations.

The family of block Kronecker $\ell$-ifications contains many examples of companion forms; see \cite[Section 5.4]{ell-ifications-DPV}. 
This motivates the following definition.
\begin{definition}[Block Kronecker companion form]
Given an $n\times n$ matrix polynomial $P(\lambda)$ as in \eqref{eq:poly} with degree $d$, we will refer to any block Kronecker matrix polynomial
\[
\mathcal{L}(\lambda)=\left[ \begin{array}{c|c}
M(\lambda) & L_\eta(\lambda^\ell)^T\otimes I_n \\ \hline
L_\epsilon(\lambda^\ell)\otimes I_n & \phantom{\Big{(}} 0 \phantom{\Big{)}}
\end{array}\right],
\]
where $M(\lambda)$  satisfies \eqref{eq:P} and each of its block entries is either $0_n$, $I_n$ or $A_i$, for some $i\in\{0,\hdots,d\}$, as a \emph{block Kronecker companion form (or block Kronecker companion $\ell$-ification) of $P(\lambda)$}.
\end{definition}

Let us see some examples of block Kronecker companion forms.
Let $P(\lambda)$ denote an $n\times n$ matrix polynomial as in \eqref{eq:poly} with degree $d$, and let $\ell$ be any divisor of $d$.
Write $k=d/\ell=\epsilon+\eta+1$, for some nonnegative integers $\epsilon,\eta$.
Consider  the matrix polynomials $B_i(\lambda)$ defined in \eqref{eq:B_poly}.
Then, from Theorem \ref{thm:block Kronecker poly}, we can easily check that the block Kronecker matrix polynomial
\[
\mathcal{L}_{\epsilon,\eta}(\lambda) := \left[ \begin{array}{cccc|ccc}
B_k(\lambda) & B_{k-1}(\lambda) & \cdots & B_{\eta+1}(\lambda) & -I_n & 0 & 0  \\
0 & \cdots & 0 & \vdots & \lambda^\ell I_n & \ddots & 0 \\
\vdots & \ddots & \vdots & B_2(\lambda) & 0 & \ddots & -I_n\\
0 & \cdots & 0 & B_1(\lambda) & 0 & 0 & \lambda^\ell I_n \\ \hline
-I_n & \phantom{\Big{(}}\lambda^\ell I_n\phantom{\Big{(}} & 0 & 0 & 0 & \cdots & 0\\
0 & \ddots & \ddots  & 0 & \vdots & \ddots & \vdots \\
0 & 0 & -I_n & \lambda^\ell  I_n & 0 & \cdots & 0
\end{array} \right]
\]
is a block Kronecker companion form of $P(\lambda)$.
Particularizing $\mathcal{L}_{\epsilon,\eta}(\lambda)$ to the pairs $(\epsilon,\eta)=(k-1,0)$ and $(\epsilon,\eta)=(0,k-1)$ yields the so-called \emph{Frobenius-like companion
forms}
\[
C_1^\ell(\lambda):=\begin{bmatrix}
B_k(\lambda) & B_{k-1}(\lambda) & \cdots & B_2(\lambda) & B_1(\lambda) \\
-I_n & \lambda^\ell I_n & 0 & \cdots & 0 \\
0 & \ddots & \ddots & \ddots & \vdots \\
\vdots & \ddots & -I_n & \lambda^\ell I_n & 0 \\
0 & \cdots & 0 & -I_n & \lambda^\ell I_n
\end{bmatrix}
\]
and
\[
C_2^\ell(\lambda):=\begin{bmatrix}
B_k(\lambda) & -I_n & 0 & \cdots & 0 \\
B_{k-1}(\lambda) & \lambda^\ell I_n & \ddots & \ddots & \vdots \\
\vdots & 0 & \ddots & -I_n & 0 \\
B_2(\lambda) & \vdots & \ddots & \lambda^\ell I_n & -I_n \\
B_1(\lambda) & 0 & \cdots & 0 & \lambda^\ell I_n
\end{bmatrix}
%L_\epsilon(\lambda^\ell)\otimes I_n & 0 \end{array}\right]
\]
introduced and thoroughly analyzed in \cite{Index Sum Theorem}.

%Consider a matrix polynomial $P(\lambda)$ as in \eqref{eq:poly} of degree $d=6$, and let $\ell=2$.
%Set, first, $\epsilon=2$ and $\eta=0$. 
%Then, we have
%\begin{equation}\label{eq:ex1}
%\mathcal{L}_{2,0}(\lambda;P) = \begin{bmatrix}
%\lambda^2 P_6+\lambda P_5 & \lambda^2 P_4+\lambda P_3 & \lambda^2 P_2 + \lambda P_1 + P_0 \\ \hline
%-I_n & \lambda^2 I_n & 0 \\
%0 & -I_n & \lambda^2 I_n
%\end{bmatrix}.
%\end{equation}
%Set, now, $\epsilon=\eta=1$.
%Then, we have
%\begin{equation}\label{eq:ex2}
%\mathcal{L}_{1,1}(\lambda;P) = \left[ \begin{array}{cc|c}
%\lambda^2 P_6+\lambda P_5 & \lambda^2 P_4+P_3 & -I_n \\
%0 & \lambda^2 P_2+\lambda P_1 + P_0 & \lambda^2 \\ \hline
%-I_n & \lambda^2 I_n 
%\end{array}\right].
%\end{equation}
%Both \eqref{eq:ex1} and \eqref{eq:ex2} are strong 2-ifications (i.e., strong quadratifications) of the matrix polynomial $P(\lambda)$.
%\end{example}

\section{One-sided factorizations and eigenvector formulas for block Kronecker $\ell$-ifications}\label{sec:one sided factorization}

One key property of block Kronecker $\ell$-ifications is that they satisfy simple left- and right-sided factorizations.
Obtaining such factorization is the subject of the following section.
These one-sided factorizations will allow us to obtain in Section \ref{sec:eigenvectors} eigenvector formulas for block Kronecker $\ell$-ifications.

\subsection{One-sided factorizations for block Kronecker $\ell$-ifications}

The  goal of this section is to show that every block Kronecker $\ell$-ification $\mathcal{L}(\lambda)$ of a matrix polynomial $P(\lambda)$ satisfies one-sided factorizations of the form
\[
\mathcal{L}(\lambda) H(\lambda) = g\otimes P(\lambda) \quad\mbox{and} \quad 
 G(\lambda)\mathcal{L}(\lambda)  = h^T\otimes P(\lambda),
\]
where $\otimes$ denotes the Kronecker product, for some nonzero vectors $g$ and $h$, and some matrix polynomials $H(\lambda)$ and $G(\lambda)$.
The importance of one-sided factorizations is widely recognized, since they are a useful tool for analyzing nonlinear eigenvalue problems \cite{framework}.

We begin with the auxiliary matrix polynomials that appear repeatedly throughout the following development.
\begin{definition}\label{def:matrices}
For any nonnegative integer $k$ and nonzero natural number $n$, we define the following two block-Toeplitz matrix polynomials:
\begin{equation}\label{eq:Rk}
R_k(\lambda):=
\begin{bmatrix}
I_n & 0 & \cdots & 0 & 0 \\
\lambda I_n & \ddots & \ddots & \vdots & \vdots \\
\vdots & \ddots & I_n & 0 & \vdots \\
\lambda^{k-1}I_n & \cdots & \lambda I_n & I_n & 0 
\end{bmatrix}\in\mathbb{C}[\lambda]^{kn\times (k+1)n},
\end{equation}
and
\begin{equation}\label{eq:Sk}
S_k(\lambda):=
\begin{bmatrix}
0 & \lambda^{k-1}I_n & \lambda^{k-2}I_n & \cdots & I_n \\
\vdots & 0 & \lambda^{k-1}I_n & \ddots & \vdots \\
\vdots & \vdots & \ddots & \ddots & \lambda^{k-2}I_n \\
0 & 0 & \cdots & 0 & \lambda^{k-1}I_n
\end{bmatrix}\in\mathbb{C}[\lambda]^{kn\times (k+1)n},
\end{equation}
with the convention that when $k=0$, both $R_k(\lambda)$ and $S_k(\lambda)$ denote the empty matrix.

For any nonnegative integers  $p$ and $q$, nonzero natural numbers $n$ and $\ell$, and $n(q+1)\times n(p+1)$ matrix polynomial $M(\lambda)$, we define the following two matrix polynomials:
\begin{equation}\label{eq:H}
H(\lambda;p,q,M):=\begin{bmatrix}
\Lambda_p(\lambda^\ell)\otimes I_n \\
R_q(\lambda^\ell)M(\lambda)(\Lambda_p(\lambda^\ell)\otimes I_n)
\end{bmatrix},
\end{equation}
and
\begin{equation}\label{eq:G}
G(\lambda;p,q,M):=\begin{bmatrix}
\lambda^{q\ell} (\Lambda_p(\lambda^\ell)\otimes I_n) \\
-S_q(\lambda^\ell)M(\lambda)(\Lambda_p(\lambda^\ell)\otimes I_n)
\end{bmatrix},
\end{equation}
where $\Lambda_k(\lambda)$, $S_k(\lambda)$ and $R_k(\lambda)$ are defined, respectively in \eqref{eq:Lambda}, \eqref{eq:Sk} and \eqref{eq:Rk}.
Observe that $H(\lambda;p,0,M)=G(\lambda;p,0,M)=\Lambda_{p}(\lambda^\ell)\otimes I_n$.
\end{definition}

Lemma contains some results on the norms of the matrices introduced in Definition \ref{def:matrices} needed for proving the main results of this work.
\begin{lemma}\label{lemma:bound}
Consider the matrix polynomials $\Lambda_k(\lambda)$, $R_k(\lambda)$ and $S_k(\lambda)$ defined in \eqref{eq:Lambda}, \eqref{eq:Rk} and \eqref{eq:Sk}, respectively.
 If $|\lambda|\leq 1$, then
\begin{itemize}
\item[\rm (a1)] $\| \Lambda_k(\lambda^\ell)\otimes I_n \|_2\leq \sqrt{k+1}$, 
 \item[\rm (b1)] $\| R_k(\lambda^\ell)\|_2\leq k$, and
 \item[\rm (c1)] $\| S_k(\lambda^\ell)\|_2\leq k$.
\end{itemize}
If $|\lambda|>1$, then
\begin{itemize}
\item[\rm (a2)] $|\lambda|^{-k\ell}\| \Lambda_k(\lambda^\ell)\otimes I_n \|_2\leq \sqrt{k+1}$, 
 \item[\rm (b2)] $|\lambda|^{-(k-1)\ell}\| R_k(\lambda^\ell)\|_2\leq k$, and
  \item[\rm (c2)] $|\lambda|^{-(k-1)\ell}\| S_k(\lambda^\ell)\|_2\leq k$.
\end{itemize}
\end{lemma}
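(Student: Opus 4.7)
\medskip

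\noindent\textbf{Proof plan.} The plan is to reduce every bound to an elementary estimate on a geometric sum of powers of $|\lambda|^{\ell}$, after exploiting the very special block structure of $\Lambda_k(\lambda^\ell)$, $R_k(\lambda^\ell)$ and $S_k(\lambda^\ell)$.

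\medskip

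\noindent\textbf{Part (a).} First I would note that for any column vector $v$ we have $\|v\otimes I_n\|_2=\|v\|_2$, since Kronecker product with an identity preserves singular values. Applied to $v=\Lambda_k(\lambda^\ell)$, this gives
\[
\|\Lambda_k(\lambda^\ell)\otimes I_n\|_2^2 \;=\; \sum_{i=0}^{k}|\lambda|^{2\ell i}.
\]
For $|\lambda|\le 1$ each summand is bounded by $1$, which yields (a1). For $|\lambda|>1$ I factor out the dominant term $|\lambda|^{2k\ell}$ to write the sum as $|\lambda|^{2k\ell}\sum_{i=0}^k|\lambda|^{-2\ell(k-i)}$, each summand of which is again $\le 1$; taking square roots gives (a2).

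\medskip

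\noindent\textbf{Parts (b) and (c).} The key structural observation is that $R_k(\lambda^\ell)$ and $S_k(\lambda^\ell)$ each admit a decomposition as a sum of monomials times simple shift-like matrices:
\[
R_k(\lambda^\ell)\;=\;\sum_{j=0}^{k-1}\lambda^{j\ell}\,E_j^{R}, \qquad S_k(\lambda^\ell)\;=\;\sum_{j=0}^{k-1}\lambda^{j\ell}\,E_j^{S},
\]
where each $E_j^R$ (respectively $E_j^S$) is the $0/1$-block matrix obtained by placing $I_n$ on a single block diagonal of $R_k$ (resp.\ $S_k$) and zero elsewhere. Each $E_j^R$ and $E_j^S$ has at most one $I_n$ per row and per column, so its columns are pairwise orthogonal and of norm $0$ or $1$; hence $\|E_j^R\|_2=\|E_j^S\|_2=1$. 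By the triangle inequality,
\[
\|R_k(\lambda^\ell)\|_2 \;\le\; \sum_{j=0}^{k-1}|\lambda|^{j\ell}, \qquad \|S_k(\lambda^\ell)\|_2 \;\le\; \sum_{j=0}^{k-1}|\lambda|^{j\ell}.
\]
For $|\lambda|\le 1$, each of the $k$ terms is at most $1$, establishing (b1) and (c1). For $|\lambda|>1$, I factor out $|\lambda|^{(k-1)\ell}$, so the scaled sum becomes $\sum_{j=0}^{k-1}|\lambda|^{(j-k+1)\ell}$, whose terms are all at most $1$; this gives (b2) and (c2).

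\medskip

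\noindent\textbf{Anticipated difficulty.} There is no real obstacle here; everything reduces to the shift decomposition and the geometric sum estimate. The only small subtlety is to verify that $\|E_j^{R}\|_2=\|E_j^{S}\|_2=1$ rather than invoking Lemma~\ref{lemma:norm bound} directly, because the latter would only give the slightly weaker bound $\sqrt{k(k+1)}$ in the regime $|\lambda|=1$, whereas we need the sharper constant $k$. Using the explicit shift decomposition and orthogonality of columns is what yields the clean constant $k$ stated in the lemma.
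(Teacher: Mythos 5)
Your proof is correct, but for parts (b) and (c) it follows a different route from the paper. You write $R_k(\lambda^\ell)$ and $S_k(\lambda^\ell)$ as sums of $k$ monomials $\lambda^{j\ell}$ times block shift matrices of unit spectral norm and then use the triangle inequality plus a geometric-sum estimate; the paper instead observes that the zero block column of $R_k(\lambda^\ell)$ and $S_k(\lambda^\ell)$ can be discarded, after which the remaining $k\times k$ block Toeplitz matrix is a Kronecker product of a scalar $k\times k$ Toeplitz matrix with $I_n$, and the bound $\|A\|_2\le\sqrt{mn}\max_{i,j}|a_{ij}|$ together with $\|A\otimes B\|_2=\|A\|_2\|B\|_2$ (entries of modulus at most $1$, after scaling by the appropriate power of $|\lambda|$ when $|\lambda|>1$) gives the constant $k=\sqrt{k\cdot k}$ directly. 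So your worry in the final remark, that the crude entrywise bound would only yield $\sqrt{k(k+1)}$, is resolved in the paper precisely by deleting the zero column first; your shift decomposition achieves the same constant and in fact would give the slightly sharper bound $\sum_{j=0}^{k-1}|\lambda|^{j\ell}$, which is strictly less than $k$ when $|\lambda|<1$, though the lemma does not need this. For part (a) the two arguments essentially coincide (you use the exact identity $\|\Lambda_k(\lambda^\ell)\otimes I_n\|_2=\|\Lambda_k(\lambda^\ell)\|_2$, the paper the same generic entrywise bound), and in all six cases your handling of the $|\lambda|>1$ regime by factoring out the dominant power of $|\lambda|$ matches the scaling in the statement, so no gaps remain.
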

\begin{proof}
Since $\|\begin{bmatrix} 0 & A \end{bmatrix} \|_2 = \|\begin{bmatrix} A & 0 \end{bmatrix} \|_2 = \|A\|_2$ for any matrix $A$, notice, first, that for computing the 2-norm of the matrices $S_k(\lambda^\ell)$ and $R_k(\lambda^\ell)$ we can ignore their zero columns. 
Recall that the 2-norm is an absolute norm and, thus, a monotone norm.
Then, the six bounds follow from the bound $\|A\|_2 \leq \sqrt{mn}\max_{i,j}\{ |a_{ij}| \}$, which is true for any $m\times n$ matrix $A=(a_{ij})$, the fact that $\|A\otimes B\|_2=\|A\|_2\otimes \|B\|_2$, and the fact that the modulus of the entries of the matrices are all upper bounded by 1.
\end{proof}

In Theorem \ref{thm:right sided fact} we establish two different right-sided factorizations for block Kronecker $\ell$-ifications.
\begin{theorem}\label{thm:right sided fact}{\rm (right-sided factorizations)}
Let $P(\lambda)$ be an $n\times n$ matrix polynomial as in \eqref{eq:poly} of degree $d$.
Assume $d$ is divisible by $\ell$, and let $\mathcal{L}(\lambda)$ as in \eqref{eq:block Kronecker poly} be an $(\epsilon,n,\eta,n)$-block Kronecker $\ell$-ification of $P(\lambda)$.
Then, the following right-sided factorizations hold:
\begin{equation}\label{eq:right-sided1}
\mathcal{L}(\lambda) 
H(\lambda;\epsilon,\eta,M) =\mathcal{L}(\lambda)\begin{bmatrix}
\Lambda_\epsilon(\lambda^\ell)\otimes I_n \\
R_\eta(\lambda^\ell)M(\lambda)(\Lambda_\epsilon(\lambda^\ell)\otimes I_n)
\end{bmatrix} = e_{\eta+1}\otimes P(\lambda),
\end{equation}
and 
\begin{equation}\label{eq:right-sided2}
\mathcal{L}(\lambda) 
G(\lambda;\epsilon,\eta,M)= \mathcal{L}(\lambda)\begin{bmatrix}
\lambda^{\eta\ell} (\Lambda_\epsilon(\lambda^\ell)\otimes I_n) \\
-S_\eta(\lambda^\ell)M(\lambda)(\Lambda_\epsilon(\lambda^\ell)\otimes I_n)
\end{bmatrix} = e_{1}\otimes P(\lambda),
\end{equation}
where $e_j$ denotes the $j$th column of the $d/\ell\times d/\ell$ identity matrix.
\end{theorem}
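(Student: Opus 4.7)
The plan is to verify both identities by direct block-matrix multiplication, partitioning each product according to the block-row structure of $\mathcal{L}(\lambda)$: a top strip of $(\eta+1)$ block-rows and a bottom strip of $\epsilon$ block-rows. In both factorizations the bottom strip is immediate: for \eqref{eq:right-sided1} it equals $(L_\epsilon(\lambda^\ell)\Lambda_\epsilon(\lambda^\ell))\otimes I_n$, and for \eqref{eq:right-sided2} it equals $\lambda^{\eta\ell}$ times the same quantity; both vanish by the identity $L_k(\lambda)\Lambda_k(\lambda)=0$ noted right after \eqref{eq:Lambda}. So the entire content of the theorem lies in the top strip.

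For the top strip, pulling the common factor $M(\lambda)(\Lambda_\epsilon(\lambda^\ell)\otimes I_n)$ out to the right and writing $R_\eta(\lambda^\ell)=\widetilde{R}_\eta(\lambda^\ell)\otimes I_n$, $S_\eta(\lambda^\ell)=\widetilde{S}_\eta(\lambda^\ell)\otimes I_n$, with $\widetilde{R}_\eta,\widetilde{S}_\eta$ the underlying $\eta\times(\eta+1)$ scalar matrices, the top of \eqref{eq:right-sided1} becomes $\bigl((I_{\eta+1}+L_\eta(\lambda^\ell)^T\widetilde{R}_\eta(\lambda^\ell))\otimes I_n\bigr)M(\lambda)(\Lambda_\epsilon(\lambda^\ell)\otimes I_n)$, and the top of \eqref{eq:right-sided2} becomes $\bigl((\lambda^{\eta\ell}I_{\eta+1}-L_\eta(\lambda^\ell)^T\widetilde{S}_\eta(\lambda^\ell))\otimes I_n\bigr)M(\lambda)(\Lambda_\epsilon(\lambda^\ell)\otimes I_n)$. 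Setting $\mu=\lambda^\ell$, the whole proof therefore reduces to verifying the two scalar identities
\[
I_{\eta+1}+L_\eta(\mu)^T\widetilde{R}_\eta(\mu)=e_{\eta+1}\Lambda_\eta(\mu)^T,\qquad \mu^\eta I_{\eta+1}-L_\eta(\mu)^T\widetilde{S}_\eta(\mu)=e_1\Lambda_\eta(\mu)^T.
\]
Both are checked by a short entrywise calculation: the bidiagonal pattern of $L_\eta(\mu)$ produces a telescoping cancellation against the Toeplitz bands of $\widetilde{R}_\eta(\mu)$ and $\widetilde{S}_\eta(\mu)$ in all rows except, respectively, the last and the first, where the residue equals precisely $\Lambda_\eta(\mu)^T$.

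With these two identities in hand the proof concludes in one line. Pulling $(e_{\eta+1}\otimes I_n)$ and $(e_1\otimes I_n)$ respectively to the left, and then invoking the defining equation \eqref{eq:P} for $M(\lambda)$ to contract $(\Lambda_\eta(\lambda^\ell)^T\otimes I_n)M(\lambda)(\Lambda_\epsilon(\lambda^\ell)\otimes I_n)$ to $P(\lambda)$, the top strip of \eqref{eq:right-sided1} becomes $e_{\eta+1}\otimes P(\lambda)$ and the top strip of \eqref{eq:right-sided2} becomes $e_1\otimes P(\lambda)$; combined with the vanishing bottom strips (where $e_{\eta+1}$ and $e_1$ now live naturally in $\mathbb{C}^k$ with $k=d/\ell=\epsilon+\eta+1$), we recover exactly the right-hand sides of \eqref{eq:right-sided1} and \eqref{eq:right-sided2}. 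The only non-routine step is the entrywise verification of the two scalar identities above; no additional tools beyond Definition \ref{def:matrices} and the defining relation \eqref{eq:P} are required.
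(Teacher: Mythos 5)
Your proposal is correct and follows essentially the same route as the paper's proof: the bottom block-rows vanish because $L_\epsilon(\lambda^\ell)\Lambda_\epsilon(\lambda^\ell)=0$, and the top block-rows reduce to the identities $I_{(\eta+1)n}+(L_\eta(\lambda^\ell)^T\otimes I_n)R_\eta(\lambda^\ell)=e_{\eta+1}\otimes(\Lambda_\eta(\lambda^\ell)^T\otimes I_n)$ and $\lambda^{\eta\ell}I_{(\eta+1)n}-(L_\eta(\lambda^\ell)^T\otimes I_n)S_\eta(\lambda^\ell)=e_1\otimes(\Lambda_\eta(\lambda^\ell)^T\otimes I_n)$, after which the defining relation \eqref{eq:P} contracts the top strip to $P(\lambda)$. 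Your only deviation is the harmless cosmetic step of peeling off the Kronecker factor $\otimes I_n$ to state these as scalar telescoping identities, which the paper uses implicitly in block form.
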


\begin{proof}
Recall that the matrix polynomial $M(\lambda)$ in the (1,1) block of $\mathcal{L}(\lambda)$ satisfies  $ (\Lambda_\eta(\lambda^\ell)^T\otimes I_n)M(\lambda)(\Lambda_\epsilon(\lambda^\ell)\otimes I_n)=P(\lambda)$, as this will be important.
The proofs of \eqref{eq:right-sided1} and \eqref{eq:right-sided2} consist of  direct verifications.

We begin by proving \eqref{eq:right-sided1}. 
First, notice $(L_\epsilon(\lambda^\ell)\otimes I_n)(\Lambda_\epsilon(\lambda^\ell)\otimes I_n)=0$.
This implies that the bottom $n\epsilon$ rows of $\mathcal{L}(\lambda) 
H(\lambda;\epsilon,\eta,M)$ are all equal to zero. 
The remaining rows (upper $(\eta+1)n$ rows) are equal to 
\begin{align*}
&M(\lambda)(\Lambda_\epsilon(\lambda^\ell)\otimes I_n) + (L_\eta(\lambda^\ell)^T\otimes I_n)R_\eta(\lambda^\ell)M(\lambda)(\Lambda_\epsilon(\lambda^\ell)\otimes I_n) = \\
&\left(I_{(\eta+1)n}+(L_\eta(\lambda^\ell)^T\otimes I_n)R_\eta(\lambda^\ell)\right)M(\lambda)(\Lambda_\epsilon(\lambda^\ell)\otimes I_n) = \\
&\left(I_{(\eta+1)n}-I_{(\eta+1)n}+\begin{bmatrix} 0 \\ \Lambda_\eta(\lambda^\ell)^T\otimes I_n \end{bmatrix} \right)M(\lambda)(\Lambda_\epsilon(\lambda^\ell)\otimes I_n) = \\
&\begin{bmatrix}
0 \\ (\Lambda_\eta(\lambda^\ell)^T\otimes I_n)M(\lambda)(\Lambda_\epsilon(\lambda^\ell)\otimes I_n)
\end{bmatrix} = \begin{bmatrix}
0 \\ P(\lambda)
\end{bmatrix},
\end{align*}
from where the result readily follows.

Next, we prove \eqref{eq:right-sided2}. 
From  $(L_\epsilon(\lambda^\ell)\otimes I_n)(\Lambda_\epsilon(\lambda^\ell)\otimes I_n)=0$, we obtain that the bottom $n\epsilon$ rows of $\mathcal{L}(\lambda) 
G(\lambda;\epsilon,\eta,M)$ are all equal to zero. 
The remaining rows (upper $(\eta+1)n$ rows) are equal to 
\begin{align*}
&\lambda^{\eta\ell} M(\lambda)(\Lambda_\epsilon(\lambda^\ell)\otimes I_n) - (L_\eta(\lambda^\ell)^T\otimes I_n)S_\eta(\lambda^\ell)M(\lambda)(\Lambda_\epsilon(\lambda^\ell)\otimes I_n) = \\
&\left(\lambda^{\eta\ell} I_{(\eta+1)n}-(L_\eta(\lambda^\ell)^T\otimes I_n)S_\eta(\lambda^\ell)\right)M(\lambda)(\Lambda_\epsilon(\lambda^\ell)\otimes I_n) = \\
&\left(\lambda^{\eta\ell} I_{(\eta+1)n}-\lambda^{\eta\ell} I_{(\eta+1)n}+\begin{bmatrix} \Lambda_\eta(\lambda^\ell)^T\otimes I_n \\ 0 \end{bmatrix} \right)M(\lambda)(\Lambda_\epsilon(\lambda^\ell)\otimes I_n) = \\
&\begin{bmatrix}
 (\Lambda_\eta(\lambda^\ell)^T\otimes I_n)M(\lambda)(\Lambda_\epsilon(\lambda^\ell)\otimes I_n) \\ 0
\end{bmatrix} = \begin{bmatrix}
P(\lambda) \\ 0
\end{bmatrix},
\end{align*}
which implies the desired result.
\end{proof}

We obtain in Theorem  \ref{thm:left sided fact}  left-sided factorizations for block Kronecker $\ell$-ifications, analogues of the right-sided factorizations in Theorem \ref{thm:right sided fact}.
\begin{theorem}\label{thm:left sided fact}{\rm (left-sided factorizations)}
Let $P(\lambda)$ be an $n\times n$ matrix polynomial as in \eqref{eq:poly} of degree $d$.
Assume $d$ is divisible by $\ell$, and let $\mathcal{L}(\lambda)$ as in \eqref{eq:block Kronecker poly} be an $(\epsilon,n,\eta,n)$-block Kronecker $\ell$-ification of $P(\lambda)$.
Then, the following left-sided factorizations hold:
\begin{equation}\label{eq:left-sided1}
H(\lambda;\eta,\epsilon,M^T)^T
\mathcal{L}(\lambda)  = \begin{bmatrix}
\Lambda_\eta(\lambda^\ell)\otimes I_n \\
R_\epsilon(\lambda^\ell)M(\lambda)^T(\Lambda_\eta(\lambda^\ell)\otimes I_n)
\end{bmatrix}^T\mathcal{L}(\lambda)
 = e_{\epsilon+1}^T\otimes P(\lambda),
\end{equation}
and 
\begin{equation}\label{eq:left-sided2}
G(\lambda;\eta,\epsilon,M^T)^T
\mathcal{L}(\lambda) =
\begin{bmatrix}
\lambda^{\epsilon\ell} (\Lambda_\eta(\lambda^\ell)\otimes I_n) \\
-S_\epsilon(\lambda^\ell)M(\lambda)^T(\Lambda_\eta(\lambda^\ell)\otimes I_n)
\end{bmatrix}^T\mathcal{L}(\lambda)=
  e_{1}^T\otimes P(\lambda),
\end{equation}
where $e_j$ denotes the $j$th column of the $d/\ell\times d/\ell$ identity matrix.
\end{theorem}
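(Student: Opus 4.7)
The plan is to reduce the left-sided factorizations to the right-sided factorizations already proved in Theorem \ref{thm:right sided fact} by exploiting the natural transposition symmetry of the block Kronecker structure.

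The first step is to observe that if $\mathcal{L}(\lambda)$ is an $(\epsilon,n,\eta,n)$-block Kronecker $\ell$-ification of $P(\lambda)$ with central block $M(\lambda)$, then $\mathcal{L}(\lambda)^T$ is an $(\eta,n,\epsilon,n)$-block Kronecker $\ell$-ification of $P(\lambda)^T$ with central block $M(\lambda)^T$. This is clear from
\[
\mathcal{L}(\lambda)^T=\left[\begin{array}{c|c} M(\lambda)^T & L_\epsilon(\lambda^\ell)^T\otimes I_n \\ \hline L_\eta(\lambda^\ell)\otimes I_n & 0 \end{array}\right],
\]
together with the identity $(\Lambda_\epsilon(\lambda^\ell)^T\otimes I_n)M(\lambda)^T(\Lambda_\eta(\lambda^\ell)\otimes I_n)=P(\lambda)^T$, which follows from \eqref{eq:P} by transposing.

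The second step is to apply Theorem \ref{thm:right sided fact} to $\mathcal{L}(\lambda)^T$ with the roles of $(\epsilon,\eta)$ and $M$ interchanged accordingly. This yields
\[
\mathcal{L}(\lambda)^T\,H(\lambda;\eta,\epsilon,M^T)=e_{\epsilon+1}\otimes P(\lambda)^T,\qquad \mathcal{L}(\lambda)^T\,G(\lambda;\eta,\epsilon,M^T)=e_1\otimes P(\lambda)^T.
\]
Transposing both identities, and using the standard property $(A\otimes B)^T=A^T\otimes B^T$, we obtain exactly \eqref{eq:left-sided1} and \eqref{eq:left-sided2}.

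The only thing to check carefully is that the explicit block forms of $H(\lambda;\eta,\epsilon,M^T)^T$ and $G(\lambda;\eta,\epsilon,M^T)^T$ match those displayed in the theorem statement; this is immediate from the definitions \eqref{eq:H} and \eqref{eq:G}. I do not expect any real obstacle: the argument is a bookkeeping exercise in the transposition symmetry of the Kronecker structure. Alternatively, one could give a direct proof mimicking the computation in Theorem \ref{thm:right sided fact}, multiplying $\mathcal{L}(\lambda)$ on the left by the claimed row vectors and using $(\Lambda_\epsilon(\lambda^\ell)^T\otimes I_n)(L_\epsilon(\lambda^\ell)^T\otimes I_n)=0$ together with the transposed Toeplitz identities $R_k(\lambda)(L_k(\lambda)^T)=-I+[\Lambda_k(\lambda)^T\ 0]^T\cdots$, but the reduction via transpose is shorter and avoids redoing the same block algebra.
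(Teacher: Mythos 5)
Your proposal is correct and is essentially the paper's own argument: the paper likewise notes that $\mathcal{L}(\lambda)^T$ is an $(\eta,n,\epsilon,n)$-block Kronecker $\ell$-ification of $P(\lambda)^T$ with $(1,1)$ block $M(\lambda)^T$ and obtains \eqref{eq:left-sided1} and \eqref{eq:left-sided2} by applying Theorem \ref{thm:right sided fact} to $\mathcal{L}(\lambda)^T$ and transposing. No gaps; the bookkeeping with $(A\otimes B)^T=A^T\otimes B^T$ and the swap of $\epsilon$ and $\eta$ is exactly as you describe.
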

\begin{proof}
Notice that $\mathcal{L}(\lambda)^T$ is a block Kronecker matrix polynomial with the roles of $\epsilon$ and $\eta$ interchanged, and with $(1,1)$ block equal to $M(\lambda)^T$.
Hence, $\mathcal{L}(\lambda)^T$ is an $(\eta,n,\epsilon,n)$-block Kronecker $\ell$-ification of the matrix polynomial
\[
(\Lambda_\epsilon(\lambda^\ell)^T\otimes I_n)M(\lambda)^T(\Lambda_\eta(\lambda^\ell)\otimes I_n)=P(\lambda)^T.
\]
Then, the left-sided factorizations \eqref{eq:left-sided1} and \eqref{eq:left-sided2} can be obtained by applying Theorem \ref{thm:right sided fact} to  $\mathcal{L}(\lambda)^T$.
\end{proof}

The one-sided factorizations in Theorems \ref{thm:right sided fact} and \ref{thm:left sided fact}  allow us in the following section to obtain simple formulas for the eigenvectors of block Kronecker $\ell$-ifications in terms of the eigenvectors of the matrix polynomial $P(\lambda)$.
These formulas are a key feature of block Kronecker $\ell$-ifications, and they will play a key role in all of our results.
\subsection{Eigenvector formulas for block Kronecker $\ell$-ifications}\label{sec:eigenvectors}

Theorem \ref{thm:right eigenvectors} establishes two relations between right eigenvectors of $P(\lambda)$ and right eigenvectors of a block Kronecker $\ell$-ification of $P(\lambda)$.
%We focus on the case where the eigenvalue of $P(\lambda)$ is simple, which is the case of interested in this work.
%The case where the eigenvalue is multiple has been studied in \cite{DLPV}.

\begin{theorem}\label{thm:right eigenvectors}{\rm (right eigenvector formulas)}
Let $P(\lambda)$ be an $n\times n$ matrix polynomial as in \eqref{eq:poly} of degree $d$.
Assume $d$ is divisible by $\ell$, and let $\mathcal{L}(\lambda)$ as in \eqref{eq:block Kronecker poly} be an $(\epsilon,n,\eta,n)$-block Kronecker $\ell$-ification of $P(\lambda)$.
The following statements hold.
\begin{itemize}
\item[\rm (a)] Let $\lambda_0$ be a finite eigenvalue of $P(\lambda)$. 
A vector $z$ is a right eigenvector of $\mathcal{L}(\lambda)$ associated with $\lambda_0$ if and only if 
\begin{equation}\label{eq:right eigenvector1}
z = H(\lambda_0;\epsilon,\eta,M)x=\begin{bmatrix}
\Lambda_\epsilon(\lambda_0^\ell)\otimes I_n \\
R_\eta(\lambda_0^\ell)M(\lambda_0)(\Lambda_\epsilon(\lambda_0^\ell)\otimes I_n)
\end{bmatrix}x,
\end{equation}
for some right eigenvector $x$ of $P(\lambda)$ associated with $\lambda_0$.
\item[\rm (b)] Let $\lambda_0$ be a finite \emph{nonzero} eigenvalue of $P(\lambda)$. 
A vector $z$ is a right eigenvector of $\mathcal{L}(\lambda)$ associated with $\lambda_0$ if and only if 
\begin{equation}\label{eq:right eigenvector2}
z =G(\lambda_0;\epsilon,\eta,M)x= \begin{bmatrix}
\lambda_0^{\eta\ell} (\Lambda_\epsilon(\lambda_0^\ell)\otimes I_n) \\
-S_\eta(\lambda_0^\ell)M(\lambda_0)(\Lambda_\epsilon(\lambda_0^\ell)\otimes I_n)
\end{bmatrix}x,
\end{equation}
for some right eigenvector $x$ of $P(\lambda)$ associated with $\lambda_0$.
\end{itemize}
\end{theorem}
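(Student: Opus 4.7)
The plan is to prove each direction separately, relying on the right-sided factorizations already established in Theorem~\ref{thm:right sided fact} for the ``if'' part, and on a dimension argument coming from the defining property of an $\ell$-ification for the ``only if'' part.

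For the \emph{sufficiency} in part (a), suppose $P(\lambda_0)x = 0$. Using factorization \eqref{eq:right-sided1} evaluated at $\lambda_0$ and post-multiplied by $x$, I get
\[
\mathcal{L}(\lambda_0)\,H(\lambda_0;\epsilon,\eta,M)\,x \;=\; e_{\eta+1}\otimes\bigl(P(\lambda_0)x\bigr) \;=\; 0,
\]
so $H(\lambda_0;\epsilon,\eta,M)x$ lies in $\ker \mathcal{L}(\lambda_0)$. I then need to argue that this vector is nonzero whenever $x\neq 0$. This follows because the last $n$ rows of the top block $\Lambda_\epsilon(\lambda_0^\ell)\otimes I_n$ of $H(\lambda_0;\epsilon,\eta,M)$ equal $I_n$ (since $\Lambda_\epsilon$ ends in the constant $1$), so $x$ itself appears as the bottom $n$ entries of the top block. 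The sufficiency in part (b) is identical, using \eqref{eq:right-sided2}; here the analogous bottom $n$ entries of the top block equal $\lambda_0^{\eta\ell}\,x$, which is nonzero precisely because we assume $\lambda_0\neq 0$. Hence the hypothesis $\lambda_0\neq 0$ in (b) is exactly what is needed to retain injectivity.

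For the \emph{necessity} in both parts, the key observation is that $\mathcal{L}(\lambda)$ is an $\ell$-ification of $P(\lambda)$, so there exist unimodular $U(\lambda),V(\lambda)$ with
\[
U(\lambda)\mathcal{L}(\lambda)V(\lambda) \;=\; \begin{bmatrix} P(\lambda) & 0 \\ 0 & I_s \end{bmatrix}.
\]
Evaluating at a finite $\lambda_0$, the matrices $U(\lambda_0)$ and $V(\lambda_0)$ are invertible, so
\[
\dim \ker \mathcal{L}(\lambda_0) \;=\; \dim \ker P(\lambda_0).
\]
The sufficiency direction already exhibits an injective linear map
\[
\ker P(\lambda_0) \;\longrightarrow\; \ker \mathcal{L}(\lambda_0), \qquad x \longmapsto H(\lambda_0;\epsilon,\eta,M)x
\]
(injectivity follows because $x$ is recovered from the last $n$ components of the top block). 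Since the source and target have the same finite dimension, this map is a bijection, which delivers the ``only if'' direction in (a). The argument for (b) is the same, with the map $x \mapsto G(\lambda_0;\epsilon,\eta,M)x$; injectivity here uses $\lambda_0\neq 0$ as above.

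The main technical point is just to verify injectivity of the two maps, which is immediate from inspection of $\Lambda_\epsilon(\lambda_0^\ell)\otimes I_n$. Once that is in hand, the whole theorem reduces to combining the right-sided factorizations of Theorem~\ref{thm:right sided fact} with the dimension equality $\dim \ker \mathcal{L}(\lambda_0) = \dim \ker P(\lambda_0)$ guaranteed by the $\ell$-ification property. There is no nontrivial obstacle beyond bookkeeping and correctly stating why $\lambda_0\neq 0$ is imposed only in part (b).
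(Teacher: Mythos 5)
Your proposal is correct and takes essentially the same route as the paper: sufficiency follows from the right-sided factorizations of Theorem~\ref{thm:right sided fact}, and necessity from the equality $\dim\ker\mathcal{L}(\lambda_0)=\dim\ker P(\lambda_0)$ (which the paper phrases as preservation of geometric multiplicity by the $\ell$-ification) together with the injectivity of $x\mapsto H(\lambda_0;\epsilon,\eta,M)x$, respectively $x\mapsto G(\lambda_0;\epsilon,\eta,M)x$ for $\lambda_0\neq 0$. Your explicit check of injectivity via the bottom $n\times n$ block of $\Lambda_\epsilon(\lambda_0^\ell)\otimes I_n$ merely fills in a detail the paper asserts as ``full column rank'' without proof.
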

\begin{proof}
The eigenvector formulas are an immediate consequence of the right-sided factorizations \eqref{eq:right-sided1} and \eqref{eq:right-sided2}.
We only prove part (a), since part (b) follows from a similar argument using \eqref{eq:right-sided2} instead of \eqref{eq:right-sided1}. 

Let $x$ be a right eigenvector of $P(\lambda)$ with eigenvalue $\lambda_0$, and let $z$ be the vector \eqref{eq:right eigenvector1}. 
Notice that if $x$ is a nonzero vector, so is $z$.
Evaluating \eqref{eq:right-sided1} at the eigenvalue $\lambda_0$, multiplying from the right by $x$, and using $P(\lambda_0)x=0$, yields 
\[
\mathcal{L}(\lambda_0)
\begin{bmatrix}
\Lambda_\epsilon(\lambda_0^\ell)\otimes I_n \\
R_\eta(\lambda_0^\ell)M(\lambda_0)(\Lambda_\epsilon(\lambda_0^\ell)\otimes I_n)
\end{bmatrix}x = \mathcal{L}(\lambda_0)z=e_{\eta+1}\otimes (P(\lambda_0)x) = 0.
\]
Hence, $z$ is a right eigenvector of $\mathcal{L}(\lambda)$ with  eigenvalue $\lambda_0$.

Let $z$ be a right eigenvector of $\mathcal{L}(\lambda)$ with eigenvalue $\lambda_0$. 
Assume $\lambda_0$ as an eigenvalue of $\mathcal{L}(\lambda)$ has geometric multiplicity $m$.
Since $\mathcal{L}(\lambda)$ is a strong $\ell$-ification of $P(\lambda)$, the geometric multiplicity of $\lambda_0$ as an eigenvalue of $P(\lambda)$ is also $m$.
Let $x_1,\hdots,x_m$ be linearly independent eigenvectors of $P(\lambda)$ with eigenvalue $\lambda_0$, and define $z_i = H(\lambda_0;\epsilon,\eta,M)x_i$, for $i=1,\hdots,m$.
Since $H(\lambda_0)$ has full column rank, the vectors $z_1,\hdots,z_m$ are linearly independent eigenvectors for $\mathcal{L}(\lambda)$.
Hence, the vector $z$ must be a linear combination of $z_1,\hdots,z_m$, that is,
\[
z = \sum_{i=1}^m c_i z_i = \sum_{i=1}^m c_i H(\lambda_0;\epsilon,\eta,M)x_i = H(\lambda_0;\epsilon,\eta,M) \sum_{i=1}^m c_ix_i.
\]
Therefore, $z$ is of the form $H(\lambda_0;\epsilon,\eta,M)x$ for some eigenvector $x$ for $P(\lambda)$ with eigenvalue $\lambda_0$.
\end{proof}

Theorem \ref{thm:left eigenvectors} establishes two relations between left eigenvectors of $P(\lambda)$ and left eigenvectors of a block Kronecker $\ell$-ification of $P(\lambda)$.
Here, the scalar $\overline{\lambda}$ denotes the complex conjugate of the complex number $\lambda$.

\begin{theorem}\label{thm:left eigenvectors}{\rm (left eigenvector formulas)}
Let $P(\lambda)$ be an $n\times n$ matrix polynomial as in \eqref{eq:poly}. 
Assume its degree $d$ is divisible by $\ell$, and let $\mathcal{L}(\lambda)$ as in \eqref{eq:block Kronecker poly} be an $(\epsilon,n,\eta,n)$-block Kronecker $\ell$-ification of $P(\lambda)$.
The following statements hold.
\begin{itemize}
\item[\rm (a)] Let $\lambda_0$ be a simple eigenvalue of $P(\lambda)$. 
A vector $w$ is a left eigenvector of $\mathcal{L}(\lambda)$ associated with $\lambda_0$ if and only if 
\[
w = H(\overline{\lambda_0};\eta,\epsilon,M^*)y = \begin{bmatrix}
\Lambda_\eta(\overline{\lambda_0^\ell})\otimes I_n \\
R_\eta(\overline{\lambda_0^\ell})M(\lambda_0)^*(\Lambda_\eta(\overline{\lambda_0^\ell})\otimes I_n)
\end{bmatrix}y,
\]
for some left eigenvector $y$ of $P(\lambda)$ associated with $\lambda_0$.
\item[\rm (b)] Let $\lambda_0$ be a simple \emph{nonzero} eigenvalue of $P(\lambda)$. 
A vector $w$ is a left eigenvector of $\mathcal{L}(\lambda)$ associated with $\lambda_0$ if and only if 
\[
w = G(\overline{\lambda_0};\eta,\epsilon,M^*)y =  \begin{bmatrix}
\overline{\lambda_0^{\epsilon\ell}}(\Lambda_\eta(\overline{\lambda_0^\ell})\otimes I_n)\\ -S_\epsilon(\overline{\lambda_0^\ell})M(\lambda)^*(\Lambda_\eta(\overline{\lambda_0^\ell})\otimes I_n)
\end{bmatrix}y,
\]
for some left eigenvector $y$ of $P(\lambda)$ associated with $\lambda_0$.
\end{itemize}
\end{theorem}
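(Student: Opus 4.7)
The plan is to reduce this theorem to the already-proved Theorem \ref{thm:right eigenvectors} by exploiting the standard duality that relates left and right eigenvectors through the conjugate-transpose operation. For any matrix polynomial $A(\lambda) = \sum A_i \lambda^i$ with complex coefficients, I define the dual polynomial $A^*(\lambda) := \sum A_i^* \lambda^i$, so that $A^*(\overline{\lambda_0}) = A(\lambda_0)^*$ for every scalar $\lambda_0$. With this convention, $w$ is a left eigenvector of $\mathcal{L}(\lambda)$ at $\lambda_0$ if and only if $w$ is a right eigenvector of $\mathcal{L}^*(\lambda)$ at $\overline{\lambda_0}$, and the analogous equivalence holds for $P$.

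The first step is to verify that $\mathcal{L}^*(\lambda)$ is itself a block Kronecker $\ell$-ification, with the roles of $\epsilon$ and $\eta$ interchanged and with $M^*$ in place of $M$. A direct computation, using that the matrix polynomials $L_k(\lambda)$ and $\Lambda_k(\lambda)$ have real coefficients, yields
\[
\mathcal{L}^*(\lambda) = \left[\begin{array}{c|c} M^*(\lambda) & L_\epsilon(\lambda^\ell)^T \otimes I_n \\ \hline L_\eta(\lambda^\ell) \otimes I_n & 0 \end{array}\right],
\]
which is an $(\eta, n, \epsilon, n)$-block Kronecker matrix polynomial. Dualizing the defining identity $(\Lambda_\eta(\lambda^\ell)^T \otimes I_n) M(\lambda) (\Lambda_\epsilon(\lambda^\ell) \otimes I_n) = P(\lambda)$ then shows via Theorem \ref{thm:block Kronecker poly} that $\mathcal{L}^*(\lambda)$ is a block Kronecker $\ell$-ification of $P^*(\lambda)$.

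With this identification in hand, I apply Theorem \ref{thm:right eigenvectors}(a) to $\mathcal{L}^*(\lambda)$ and $P^*(\lambda)$ at the eigenvalue $\overline{\lambda_0}$: a vector $w$ is a right eigenvector of $\mathcal{L}^*$ at $\overline{\lambda_0}$ if and only if $w = H(\overline{\lambda_0}; \eta, \epsilon, M^*)\, y$ for some right eigenvector $y$ of $P^*$ at $\overline{\lambda_0}$. Translating back through the duality, and using $\overline{\lambda_0}^{\,\ell} = \overline{\lambda_0^\ell}$ together with $M^*(\overline{\lambda_0}) = M(\lambda_0)^*$, yields the formula in part (a). Part (b) is obtained identically from Theorem \ref{thm:right eigenvectors}(b), with $G$ in place of $H$ and the mild additional hypothesis $\lambda_0 \neq 0$.

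The only real obstacle is the bookkeeping in the block-structural verification: one must track carefully how the block pattern transforms under the $(\cdot)^*$ operation so that the indices $\epsilon$ and $\eta$ end up in the right places, and so that the $(1,1)$ block becomes $M^*$ rather than $M$ itself. An alternative route proceeds directly from the left-sided factorizations in Theorem \ref{thm:left sided fact} by left-multiplying by $y^*$ and taking conjugate transposes; this works but is notationally heavier than the reduction to the right-eigenvector statement described above.
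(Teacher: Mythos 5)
Your proof is correct, and it takes a route that differs from the paper's in a small but genuine way. The paper proves Theorem \ref{thm:left eigenvectors} by invoking the left-sided factorizations of Theorem \ref{thm:left sided fact} (themselves obtained by applying Theorem \ref{thm:right sided fact} to the plain transpose $\mathcal{L}(\lambda)^T$, an $(\eta,n,\epsilon,n)$-block Kronecker $\ell$-ification of $P(\lambda)^T$) and then repeating, mutatis mutandis, the existence-plus-geometric-multiplicity argument used for Theorem \ref{thm:right eigenvectors}; you instead perform the duality at the level of the eigenvector theorem itself, passing to the coefficientwise conjugate transpose $\mathcal{L}^*(\lambda)$, checking it is an $(\eta,n,\epsilon,n)$-block Kronecker $\ell$-ification of $P^*(\lambda)$, and then applying Theorem \ref{thm:right eigenvectors} as a black box at $\overline{\lambda_0}$. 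Your reduction buys something concrete: the delicate ``only if'' direction (linear independence of the images $z_i$ and the multiplicity count) does not have to be re-run, since it is inherited wholesale from the right-eigenvector theorem, and the hypotheses you need ($\overline{\lambda_0}$ a finite, respectively nonzero, eigenvalue of the regular polynomial $P^*$) are immediate; the paper's route, on the other hand, stays over the original polynomial $P(\lambda)$ and produces the left-sided factorizations \eqref{eq:left-sided1}--\eqref{eq:left-sided2}, which is why the authors set things up that way even though it forces them to (implicitly) redo the eigenvector argument. One further point in your favor: your derivation yields $R_\epsilon(\overline{\lambda_0^\ell})\,M(\lambda_0)^*\,(\Lambda_\eta(\overline{\lambda_0^\ell})\otimes I_n)$ in part (a), consistent with the compact expression $H(\overline{\lambda_0};\eta,\epsilon,M^*)y$ and with Definition \ref{def:matrices}; the $R_\eta$ appearing in the displayed formula of the theorem statement is a typo (it is not even dimensionally conformable unless $\epsilon=\eta$), so do not ``correct'' your computation to match it.
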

\begin{proof}
Parts (a) and (b) follow from the left-sided factorizations in Theorem \ref{thm:left sided fact}.
The proof is nearly identical to the proof of Theorem \ref{thm:right eigenvectors}, so it is omitted.
\end{proof}

\section{The conditioning of block Kronecker $\ell$-ifications}
\label{sec:conditioning1}
%In this section, we study the eigenvalue condition number of a simple, finite, nonzero eigenvalue of a block Kronecker $\ell$-ification. 
%Both norm-wise and coefficient-wise perturbations are considered.

Let $\lambda_0$ be a simple, finite, nonzero eigenvalue of an $n\times n$ matrix polynomial $P(\lambda)$ as in \eqref{eq:poly}, and let $\mathcal{L}(\lambda)=\sum_{i=0}^\ell \mathcal{L}_i\lambda^i$ be a block Kronecker $\ell$-ification of $P(\lambda)$.
Note that $\lambda_0$ as an eigenvalue of $\mathcal{L}(\lambda)$ is also simple, because strong $\ell$-ifications preserve geometric multiplicities.
Let $x$ and $y$ denote right and left eigenvectors of $P(\lambda)$, and let $z$ and $w$ denote right and left eigenvectors of $\mathcal{L}(\lambda)$, all corresponding to the eigenvalue $\lambda_0$. 
By \eqref{eq:condition number}, we have eigenvalue condition numbers given by 
\begin{align*}
&\mathrm{norm\,cond}_{P}(\lambda_0)=\frac{\left(\|\begin{bmatrix} A_0 & \cdots  A_d \end{bmatrix}\|_2 \sum_{i=0}^d|\lambda_0|^i \right)\|x\|_2\|y\|_2}{|\lambda_0|\cdot |y^*P^\prime(\lambda_0)x|},\\
&\mathrm{coeff\,cond}_{P}(\lambda_0)=\frac{\left( \sum_{i=0}^d|\lambda_0|^i\|A_i\|_2 \right)\|x\|_2\|y\|_2}{|\lambda_0|\cdot |y^*P^\prime(\lambda_0)x|},  \quad \quad \mbox{and} \\
&\mathrm{coeff\,cond}_{\mathcal{L}}(\lambda_0)=\frac{\left(\sum_{i=0}^\ell|\lambda_0|^i\|\mathcal{L}_i\|_2 \right)\|z\|_2\|w\|_2}{|\lambda_0|\cdot |w^*\mathcal{L}^\prime(\lambda_0)z|}. 
\end{align*}
%\[
%\quad \mbox{and} \quad 
%\mathrm{coeff\,cond}_{\mathcal{L}}(\lambda)=\frac{\left( \sum_{i=0}^\ell|\lambda|^i\|\mathcal{L}_i\|_2 \right)\|z\|_2\|w\|_2}{|\lambda|\cdot |w^*\mathcal{L}^\prime(\lambda)x|},
%\]

It is known that an $\ell$-ification $\mathcal{L}(\lambda)$ may alter the conditioning of the eigenvalue problem quite drastically (see the numerical experiments in Section \ref{sec:experiments}).
For this reason, our aim in this section is to study the ratios
\begin{align}
\label{eq:ratios of cond numbers2}
&\frac{\mathrm{coeff\,cond}_\mathcal{L}(\lambda_0)}{\mathrm{coeff\,cond}_P(\lambda_0)} = \frac{\left( \sum_{i=0}^\ell |\lambda_0|^i\|\mathcal{L}_i\|_2 \right)}{\left( \sum_{i=0}^d |\lambda_0|^i\|A_i\|_2\ \right)}\frac{|y^*P^\prime(\lambda_0)x|}{|w^*\mathcal{L}^\prime(\lambda_0)z|}\frac{\|z\|_2\|w\|_2}{\|x\|_2\|y\|_2} \quad \mbox{and} \\
\label{eq:ratios of cond numbers}
&\frac{\mathrm{coeff\,cond}_\mathcal{L}(\lambda_0)}{\mathrm{norm\,cond}_P(\lambda_0)} = \frac{\left( \sum_{i=0}^\ell |\lambda_0|^i\|\mathcal{L}_i\|_2 \right)}{\left(\|\begin{bmatrix} A_0 & \cdots & A_d \end{bmatrix}\|_2 \sum_{i=0}^d |\lambda_0|^i\ \right)}\frac{|y^*P^\prime(\lambda_0)x|}{|w^*\mathcal{L}^\prime(\lambda_0)z|}\frac{\|z\|_2\|w\|_2}{\|x\|_2\|y\|_2}.
\end{align}
More specifically, our immediate goals are, first, to obtain upper bounds for \eqref{eq:ratios of cond numbers2} and \eqref{eq:ratios of cond numbers}, and, second, to study under which conditions these upper bounds are moderate for all the eigenvalues of $P(\lambda)$.

We start with Lemma \ref{lemma:relating cond numb}, which implies a close relation between the conditioning of $P(\lambda)$ and the conditioning of its block Kronecker $\ell$-ifications.
\begin{lemma}\label{lemma:relating cond numb}
Let $P(\lambda)$ be an $n\times n$ matrix polynomial as in \eqref{eq:poly} of degree $d$.
Assume $d$ is divisible by $\ell$, and let $\mathcal{L}(\lambda)$  as in \eqref{eq:block Kronecker poly}  be an $(\epsilon,n,\eta,n)$-block Kronecker $\ell$-ification of $P(\lambda)$.
If $\lambda_0$ is a simple and finite eigenvalue of $P(\lambda)$, with right and left eigenvectors $x$ and $y$, respectively, then the following statements hold.
\begin{itemize}
\item[\rm(a)] The vectors $z=H(\lambda_0;\epsilon,\eta,M)x$ and $H(\overline{\lambda_0};\eta,\epsilon,M^*)y$ are, respectively, right and left eigenvectors of $\mathcal{L}(\lambda)$ with eigenvalue $\lambda_0$, and 
\begin{equation}\label{eq:relating cond numb 1}
|w^*\mathcal{L}^\prime(\lambda_0)z| = |y^* P^\prime(\lambda_0) x|,
\end{equation}
where $H(\lambda;p,q,M)$ is defined in \eqref{eq:H}.
\item[\rm(b)] Assume $\lambda_0$ is, in addition, nonzero. 
The vectors $z=G(\lambda_0;\epsilon,\eta,M)x$ and $G(\overline{\lambda_0};\eta,\epsilon,M^*)y$ are, respectively, right and left eigenvectors of $\mathcal{L}(\lambda)$ with eigenvalue $\lambda_0$, and 
\begin{equation}\label{eq:relating cond numb 2}
|w^*\mathcal{L}^\prime(\lambda_0)z| = |\lambda_0|^{d-\ell} |y^* P^\prime(\lambda_0) x|,
\end{equation}
where $G(\lambda;p,q,M)$ is defined in \eqref{eq:G}.
\end{itemize}
\end{lemma}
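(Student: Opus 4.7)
The fact that the vectors $z$ and $w$ displayed in both parts are right and left eigenvectors of $\mathcal{L}(\lambda)$ associated with $\lambda_0$ is immediate from Theorems~\ref{thm:right eigenvectors} and~\ref{thm:left eigenvectors}, so the only substantive work is the two identities~\eqref{eq:relating cond numb 1} and~\eqref{eq:relating cond numb 2}. My plan is to differentiate the right-sided factorizations of Theorem~\ref{thm:right sided fact}, evaluate at $\lambda_0$, and sandwich with $w^*$ on the left and $x$ on the right; the left-eigenvector property $w^*\mathcal{L}(\lambda_0)=0$ then kills the middle term and collapses the resulting identity onto a single $n$-block of $w$.

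For part~(a), I would differentiate~\eqref{eq:right-sided1} to obtain
\[
\mathcal{L}'(\lambda)H(\lambda;\epsilon,\eta,M)+\mathcal{L}(\lambda)H'(\lambda;\epsilon,\eta,M)=e_{\eta+1}\otimes P'(\lambda),
\]
evaluate at $\lambda_0$, and multiply from the right by $x$ and from the left by $w^*$. Since $H(\lambda_0;\epsilon,\eta,M)x=z$ and $w^*\mathcal{L}(\lambda_0)=0$, this yields
\[
w^*\mathcal{L}'(\lambda_0)z \;=\; w^*\bigl(e_{\eta+1}\otimes P'(\lambda_0)x\bigr)\;=\; w_{\eta+1}^{\,*}\,P'(\lambda_0)x,
\]
where $w_{\eta+1}$ denotes the $(\eta+1)$-th block of $w$ under the partition of $\mathbb{C}^{kn}$ (with $k=\epsilon+\eta+1$) into $k$ blocks of size $n$. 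Inspecting the top $(\eta+1)n$ entries of $w=H(\overline{\lambda_0};\eta,\epsilon,M^*)y$, which equal $(\Lambda_\eta(\overline{\lambda_0^\ell})\otimes I_n)y$, one sees that the last entry of $\Lambda_\eta(\overline{\lambda_0^\ell})$ is $1$, so $w_{\eta+1}=y$ and~\eqref{eq:relating cond numb 1} follows by taking absolute values.

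Part~(b) follows the same template applied to the second right-sided factorization~\eqref{eq:right-sided2}: differentiating and sandwiching in the same way produces $w^*\mathcal{L}'(\lambda_0)z=w_1^{\,*}P'(\lambda_0)x$, where $w_1$ is now the first $n$-block of $w=G(\overline{\lambda_0};\eta,\epsilon,M^*)y$. From the explicit form of $G$ in~\eqref{eq:G}, the upper $(\eta+1)n$ rows of $w$ equal $\overline{\lambda_0}^{\epsilon\ell}(\Lambda_\eta(\overline{\lambda_0^\ell})\otimes I_n)y$, whose first $n$-block is $\overline{\lambda_0}^{\epsilon\ell}\cdot\overline{\lambda_0}^{\eta\ell}y=\overline{\lambda_0}^{(\epsilon+\eta)\ell}y=\overline{\lambda_0}^{d-\ell}y$, since $k\ell=d$. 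Hence $w_1^{\,*}=\lambda_0^{d-\ell}y^*$ and $w^*\mathcal{L}'(\lambda_0)z=\lambda_0^{d-\ell}y^*P'(\lambda_0)x$, from which~\eqref{eq:relating cond numb 2} follows.

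I do not anticipate any genuine obstacle: everything reduces to careful bookkeeping of block positions inside $H$ and $G$. The one small caveat is verifying that the left-eigenvector formula $w=H(\overline{\lambda_0};\eta,\epsilon,M^*)y$ of Theorem~\ref{thm:left eigenvectors} really does satisfy $w^*\mathcal{L}(\lambda_0)=0$, which amounts to the algebraic identity $H(\overline{\lambda_0};\eta,\epsilon,M^*)^*=H(\lambda_0;\eta,\epsilon,M^T)^T$ (and its $G$-analogue), so that the left-sided factorizations of Theorem~\ref{thm:left sided fact} apply directly; this is a routine check exploiting that $\Lambda_\eta$, $R_\epsilon$ and $S_\epsilon$ have monomial entries and that $M(\lambda_0)^*=M^*(\overline{\lambda_0})$.
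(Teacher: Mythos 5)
Your proposal is correct and follows essentially the same route as the paper: differentiate the right-sided factorizations \eqref{eq:right-sided1} and \eqref{eq:right-sided2}, evaluate at $\lambda_0$, sandwich with $w^*$ and $x$ so that $w^*\mathcal{L}(\lambda_0)=0$ removes the middle term, and read off the $(\eta+1)$-th (resp.\ first) $n$-block of $w$, which equals $y$ (resp.\ $\overline{\lambda_0}^{\,d-\ell}y$). Your closing caveat is unnecessary, since Theorem~\ref{thm:left eigenvectors} already asserts that $w$ is a left eigenvector of $\mathcal{L}(\lambda)$ at $\lambda_0$, and $w^*\mathcal{L}(\lambda_0)=0$ is precisely the defining property being used.
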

\begin{proof}
We first prove \eqref{eq:relating cond numb 1}.
Observe that part (a) of Theorems \ref{thm:right eigenvectors} and \ref{thm:left eigenvectors} say that $z=H(\lambda_0;\epsilon,\eta,M)x$ and $w=H(\overline{\lambda_0};\eta,\epsilon,M^*)y$ are, respectively, right and left eigenvectors of $ \mathcal{L}(\lambda_0)$ associated with $\lambda_0$.
Now, differentiating \eqref{eq:right-sided1} with respect to $\lambda$ gives
\[
\mathcal{L}^\prime (\lambda)H(\lambda;\epsilon,\eta,M) + \mathcal{L}(\lambda)H^\prime(\lambda;\epsilon,\eta,M)=e_{\eta+1}\otimes P^\prime(\lambda).
\]
Evaluating the equation above at the eigenvalue $\lambda_0$, multiplying from the left by $w^*$, multiplying from the right by $x$, and using $w^*\mathcal{L}(\lambda_0)=0$ produces
\[
w^* \mathcal{L}^\prime(\lambda_0)z = w^*(e_{\eta+1}\otimes (P^\prime(\lambda_0)x)) = y^*P^\prime(\lambda_0)x,
\]
where we have used that the $\eta+1$ block-entry of $H(\overline{\lambda_0};\eta,\epsilon,M^*)y$ equals $y$. 

Next, we prove \eqref{eq:relating cond numb 2}.
From part (b) of Theorems \ref{thm:right eigenvectors} and \ref{thm:left eigenvectors}, it follows that $z:=G(\lambda_0;\epsilon,\eta,M)x$ and $w:=G(\overline{\lambda_0};\eta,\epsilon,M^*)y$ are, respectively, right and left eigenvectors of $ \mathcal{L}(\lambda)$ associated with $\lambda_0$.
Differentiating \eqref{eq:right-sided2} with respect to $\lambda$ gives
\[
\mathcal{L}^\prime (\lambda)G(\lambda;\epsilon,\eta,M) + \mathcal{L}(\lambda)G^\prime(\lambda;\epsilon,\eta,M)=e_{1}\otimes P^\prime(\lambda).
\]
Evaluating the equation above at the eigenvalue $\lambda_0$, multiplying from the left by $w^*$, multiplying from the right by $x$, and using $w^*\mathcal{L}(\lambda_0)=0$ yields
\[
w^* \mathcal{L}^\prime(\lambda_0)z = w^*(e_{1}\otimes (P^\prime(\lambda_0)x)) = \overline{\lambda_0}^{d-\ell} y^*P^\prime(\lambda_0)x,
\]
where we have used that the first block-entry of $G(\overline{\lambda_0};\eta,\epsilon,M^*)y$ equals $\overline{\lambda_0}^{(\epsilon+\eta)\ell}y = \overline{\lambda_0}^{d-\ell}y$.
\end{proof}

%Using Lemma \ref{lemma:relating cond numb}, we can obtain two  expressions for the condition number ratio \eqref{eq:ratios of cond numbers}.
%First, observe that part (a) of Theorems \ref{thm:right eigenvectors} and \ref{thm:left eigenvectors} implies that $H(\lambda_0;\epsilon,\eta,M)x$ and $H(\lambda_0;\eta,\epsilon,M^T)y$ are, respectively, right and left eigenvectors of $ \mathcal{L}(\lambda)$ associated with $\lambda_0$.
%Hence,
%\begin{equation}
%\label{eq:cond number of L1}
%\frac{\kappa_{\mathcal{L}}(\lambda_0)}{\kappa_P(\lambda_0)} = \frac{\sum_{t=0}^\ell |\lambda_0|^t\,\|L_t\|_2}{\sum_{t=0}^d|\lambda_0|^t\|A_t\|_2}\cdot\frac{\|H(\lambda_0;\epsilon,\eta,M)x\|_2\,\|H(\lambda_0;\eta,\epsilon,M^T)y\|_2}{\|x\|_2\|y\|_2}.
%\end{equation}
%Similarly, from part (b) of Theorems \ref{thm:right eigenvectors} and \ref{thm:left eigenvectors}, we obtain that 
%$G(\lambda_0;\epsilon,\eta,M)x$ and $G(\lambda_0;\eta,\epsilon,M^T)y$ are, respectively, right and left eigenvectors of $ \mathcal{L}(\lambda)$ associated with $\lambda_0$, provided that $\lambda_0\neq 0$.
%Hence, for nonzero eigenvalues, we have
%\begin{equation}
%\label{eq:cond number of L2}
%\frac{\kappa_{\mathcal{L}}(\lambda_0)}{\kappa_P(\lambda_0)} = \frac{\sum_{t=0}^\ell |\lambda_0|^t\,\|L_t\|_2}{\sum_{t=0}^d|\lambda_0|^t\|A_t\|_2}\cdot\frac{\|G(\lambda_0;\epsilon,\eta,M)x\|_2\,\|G(\lambda_0;\eta,\epsilon,M^T)y\|_2}{|\lambda_0|^{d-\ell}\|x\|_2\|y\|_2}.
%\end{equation}

The expressions \eqref{eq:relating cond numb 1} and \eqref{eq:relating cond numb 2} can now be used to investigate the size of the ratios \eqref{eq:ratios of cond numbers2} and \eqref{eq:ratios of cond numbers}.

\begin{theorem}\label{thm:ratio L-P}
Let $P(\lambda)$ be an $n\times n$ matrix polynomial as in \eqref{eq:poly} of degree $d$.
Assume $d$ is divisible by $\ell$, and let $ \mathcal{L}(\lambda)=\sum_{i=0}^\ell \mathcal{L}_i\lambda^i$ as in \eqref{eq:block Kronecker poly} be an $(\epsilon,n,\eta,n)$-block Kronecker $\ell$-ification of $P(\lambda)$.
If $\lambda_0$ is a simple, finite, nonzero eigenvalue of $P(\lambda)$, then
\begin{align}\label{eq:ratio L-P}
\begin{split}
\dfrac{\mathrm{coeff\,cond}_{\mathcal{L}}(\lambda_0)}{\mathrm{coeff\,cond}_{P}(\lambda_0)}\leq &
\dfrac{2\max\{1,\max_{i=0:\ell}\{\|M_i\|_2\}\}}{\min\{ \|A_0\|_2,\|A_d\|_2 \}}
(\ell+1)(\epsilon+1)^{1/2}(\eta+1)^{1/2}\times\\ 
&\left(1+\epsilon^2(\ell+1)\sum_{i=0}^\ell\|M_i\|_2^2\right)^{1/2}
\left(1+\eta^2(\ell+1)\sum_{i=0}^\ell\|M_i\|_2^2\right)^{1/2},
\end{split}
\end{align}
and
\begin{align}\label{eq:ratio L-P norm}
\begin{split}
\dfrac{\mathrm{coeff\,cond}_{\mathcal{L}}(\lambda_0)}{\mathrm{norm\, cond}_{P}(\lambda_0)}\leq &
\dfrac{2\max\{1,\max_{i=0:\ell}\{\|M_i\|_2\}\}}{\| \begin{bmatrix} A_0 & \cdots & A_d \end{bmatrix} \|_2}
(\ell+1)(\epsilon+1)^{1/2}(\eta+1)^{1/2}\times\\ 
&\left(1+\epsilon^2(\ell+1)\sum_{i=0}^\ell\|M_i\|_2^2\right)^{1/2}
\left(1+\eta^2(\ell+1)\sum_{i=0}^\ell\|M_i\|_2^2\right)^{1/2}.
\end{split}
\end{align}
\end{theorem}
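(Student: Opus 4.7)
The strategy is to split the analysis into the two regimes $|\lambda_0|\le 1$ and $|\lambda_0|>1$, and in each regime to select the eigenvector formula from Lemma \ref{lemma:relating cond numb} whose $\lambda_0$-dependence precisely cancels the growth of the other factors in the ratio \eqref{eq:ratios of cond numbers2}. Three sub-quantities need to be controlled: the weighted coefficient-norm ratio (factor I), the scalar ratio $|y^*P^\prime(\lambda_0)x|/|w^*\mathcal{L}^\prime(\lambda_0)z|$ (factor II), and the eigenvector-norm ratio $\|z\|_2\|w\|_2/(\|x\|_2\|y\|_2)$ (factor III).

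For factor II, Lemma \ref{lemma:relating cond numb} supplies the exact value: it equals $1$ for the $H$-eigenvectors and $|\lambda_0|^{-(d-\ell)}$ for the $G$-eigenvectors. For factor III, I use the submultiplicative bounds $\|z\|_2\le\|H(\lambda_0;\epsilon,\eta,M)\|_2\|x\|_2$ (or the analogue with $G$) and the corresponding left-side bound for $w$, and estimate $\|H\|_2$ and $\|G\|_2$ row-block by row-block via Lemma \ref{lemma:bound}, combined with the Cauchy--Schwarz estimate
\[
\|M(\lambda_0)\|_2 \le \Bigl(\sum_{i=0}^\ell |\lambda_0|^{2i}\Bigr)^{1/2}\Bigl(\sum_{i=0}^\ell \|M_i\|_2^2\Bigr)^{1/2}.
\]
This yields, for $|\lambda_0|\le 1$, the estimate $\|z\|_2\|w\|_2\le\|x\|_2\|y\|_2\sqrt{(\epsilon+1)(\eta+1)}\,\sqrt{(1+\epsilon^2(\ell+1)S)(1+\eta^2(\ell+1)S)}$ with $S:=\sum_i\|M_i\|_2^2$, and the same estimate multiplied by $|\lambda_0|^{2(d-\ell)}$ for $|\lambda_0|>1$. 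For factor I, I decompose each $\mathcal{L}_i$ into a block-diagonal piece containing $M_i$ and an off-block-diagonal piece whose only nonzero blocks are $\pm I_n$ (coming from the coefficients of $L_\epsilon(\lambda^\ell)$ and $L_\eta(\lambda^\ell)^T$); the latter has spectral norm $1$, so $\|\mathcal{L}_i\|_2\le\|M_i\|_2+1\le 2\max\{1,\|M_i\|_2\}$ and therefore
\[
\sum_{i=0}^\ell |\lambda_0|^i\|\mathcal{L}_i\|_2\le 2(\ell+1)\max\{1,|\lambda_0|^\ell\}\max_j\{1,\|M_j\|_2\}.
\]
The coefficientwise denominator admits the lower bound $\max\{1,|\lambda_0|^d\}\min\{\|A_0\|_2,\|A_d\|_2\}$ (using the $i=0$ or $i=d$ term according to the regime), and the normwise denominator the lower bound $\|[A_0\ \cdots\ A_d]\|_2\,\max\{1,|\lambda_0|^d\}$.

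For $|\lambda_0|\le 1$, using the $H$-eigenvectors makes factor II equal to $1$ and removes every $\lambda_0$-dependence from factor III and from the upper bound on factor I, so that the product of the three factors immediately assembles into \eqref{eq:ratio L-P} and, with the normwise denominator, into \eqref{eq:ratio L-P norm}. For $|\lambda_0|>1$, using the $G$-eigenvectors introduces a factor $|\lambda_0|^{2(d-\ell)}$ in factor III and $|\lambda_0|^{-(d-\ell)}$ in factor II, while factor I contributes $|\lambda_0|^\ell$ in the numerator and $|\lambda_0|^d$ in the denominator; the identity $(\epsilon+\eta)\ell=d-\ell$ makes all these powers of $|\lambda_0|$ cancel exactly, leaving the same $\lambda_0$-independent bound. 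The main obstacle is precisely this $\lambda_0$-power bookkeeping in the $|\lambda_0|>1$ case: one must correctly combine, on both the left and the right eigenvector, the $\lambda_0^{\eta\ell}$ (respectively $\overline{\lambda_0}^{\epsilon\ell}$) in the top row-block of $G$, the $|\lambda_0|^{(k-1)\ell}$ growth of $S_k(\lambda_0^\ell)$ from Lemma \ref{lemma:bound}(c2), and the $|\lambda_0|^\ell$ from $\|M(\lambda_0)\|_2$, and then match these against the scalar ratio from Lemma \ref{lemma:relating cond numb}(b) and the regime-dependent lower bounds on the denominators. Once the exponents of $|\lambda_0|$ are shown to sum to zero, collecting the constant factors produces exactly the right-hand sides of \eqref{eq:ratio L-P} and \eqref{eq:ratio L-P norm}.
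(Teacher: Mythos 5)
Your proposal is correct and follows essentially the same route as the paper's proof: the same case split $|\lambda_0|\le 1$ versus $|\lambda_0|>1$, the same choice of the $H$- and $G$-eigenvector formulas together with Lemma \ref{lemma:relating cond numb}, the same norm estimates from Lemma \ref{lemma:bound}, and the same bookkeeping that makes the powers of $|\lambda_0|$ cancel via $(\epsilon+\eta)\ell=d-\ell$. Your explicit splitting of $\mathcal{L}_i$ into the $M_i$ part and a norm-one part with $\pm I_n$ blocks is just a slightly more detailed justification of the factor $2\max\{1,\max_i\|M_i\|_2\}$ that the paper obtains from Lemma \ref{lemma:norm bound}.
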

\begin{proof}
We only prove \eqref{eq:ratio L-P}, since the proof of \eqref{eq:ratio L-P norm} is entirely analogous.

Let $x$ and $y$ be, respectively, right and left eigenvectors of $P(\lambda)$ associated with the eigenvalue $\lambda_0$.
We need to distinguish two cases, namely, the case when $|\lambda_0|\leq 1$ and the case when $|\lambda_0|>1$.

Let us assume, first,  $|\lambda_0|\leq 1$
From part (a) in Lemma \ref{lemma:relating cond numb}, we obtain that the ratio \eqref{eq:ratios of cond numbers2} equals
\begin{equation}
\label{eq:ratio1}\frac{\mathrm{coeff\,cond}_{L}(\lambda_0)}{\mathrm{coeff\,cond}_{P}(\lambda_0)}=
\frac{\sum_{i=0}^\ell |\lambda_0|^i \| \mathcal{L}_i \|_2}{\sum_{i=0}^d |\lambda_0|^i \| A_i \|_2}\frac{\|H(\lambda_0;\epsilon,\eta,M)x\|_2\|H(\overline{\lambda_0};\eta,\epsilon,M^*)y\|_2}{\|x\|_2\|y\|_2},
\end{equation}
where $H(\lambda;p,q,M)$ is defined in \eqref{eq:H}.
Now, since $|\lambda_0|\leq 1$, we have
\begin{align*}
\frac{\sum_{i=0}^\ell |\lambda_0|^i \| \mathcal{L}_i \|_2}{\sum_{i=0}^d |\lambda_0|^i \| A_i \|_2}\leq  \frac{\sum_{i=0}^\ell \| \mathcal{L}_i \|_2}{\|A_0\|_2} \leq & \frac{ (\ell+1)\max_{i=0:\ell}\{ \| \mathcal{L}_i \|_2\}}{\min\{\|A_0\|_2,\|A_d\|_2\}} \leq \\
 & \frac{ 2(\ell+1)\max\{1,\max_{i=0:\ell}\{ \| \mathcal{L}_i \|_2\}\}}{\min\{\|A_0\|_2,\|A_d\|_2\}},
\end{align*}
where  the last inequality follows from Lemma \ref{lemma:norm bound}.
Then, notice 
\begin{align}\label{eq:Y1}
\begin{split}
\frac{\|H(\lambda_0;\epsilon,\eta,M)x\|_2^2}{\|x\|_2^2}=&
\frac{1}{\|x\|_2^2}\,{\Big\|} \begin{bmatrix} \Lambda_\epsilon(\lambda_0^\ell)\otimes x \\ R_\eta(\lambda_0^\ell)M(\lambda_0)(\Lambda_\epsilon(\lambda_0^\ell)\otimes x) \end{bmatrix} {\Big\|}_2^2 \leq \\
&\frac{1}{\|x\|_2^2} \left( \|\Lambda_\epsilon(\lambda_0^\ell)\|_2^2\|x\|_2^2+\|R_\eta(\lambda_0^\ell)\|_2^2 \| M(\lambda_0)\|_2^2 \|\Lambda_\epsilon(\lambda_0^\ell)\|_2^2\|x\|_2^2 \right)\leq \\
&(\epsilon+1)+\eta^2(\epsilon+1)\left(\sum_{i=0}^\ell|\lambda_0|^{i} \|M_i\|_2\right)^2 \leq \\
&(\epsilon+1)\left(1+\eta^2(\ell+1)\sum_{i=0}^\ell \|M_i\|_2^2\right),
\end{split}
\end{align}
where we have used the inequality $(\sum_{i=0}^k |a_i|)^2 \leq (k+1)\sum_{i=0}^k |a_i|^2$ for obtaining the last inequality above.
An identical argument gives the upper bound
\begin{equation}\label{eq:Y2}
\frac{\|H(\overline{\lambda_0};\eta,\epsilon,M^*)y\|_2^2}{\|y\|_2^2} \leq
(\eta+1)\left(1+\epsilon^2(\ell+1)\sum_{i=0}^\ell \|M_i\|_2^2\right).
\end{equation}
Combining the previous three inequalities, yields the desired result.

Next, let us assume $|\lambda_0|> 1$.
By part (b) in Lemma \ref{lemma:relating cond numb}, the ratio \eqref{eq:ratios of cond numbers2} can be rewritten as
\begin{equation}
\label{eq:ratio2}\frac{\mathrm{coeff\,cond}_{\mathcal{L}}(\lambda_0)}{\mathrm{coeff\,cond}_{P}(\lambda_0)}=
\frac{\sum_{i=0}^\ell |\lambda_0|^i \| \mathcal{L}_i \|_2}{\sum_{i=0}^d |\lambda_0|^i \| A_i \|_2}\frac{\|G(\lambda_0;\epsilon,\eta,M)x\|_2\|G(\overline{\lambda_0};\eta,\epsilon,M^*)y\|_2}{|\lambda_0|^{d-\ell}\|x\|_2\|y\|_2}.
\end{equation}
Since $|\lambda_0|> 1$, we easily see that
\begin{align*}
\frac{\sum_{i=0}^\ell |\lambda_0|^i \| \mathcal{L}_i \|_2}{\sum_{i=0}^d |\lambda_0|^i \| A_i \|_2}\leq \frac{\sum_{i=0}^\ell |\lambda_0|^i\| \mathcal{L}_i \|_2}{|\lambda_0|^d \|A_d\|_2}\leq &\frac{(\ell+1)\max_{i=0:d}\{\|\mathcal{L}_i\|_2\}}{|\lambda_0|^{d-\ell}\min\{\|A_0\|_2,\|A_d\|_2\}} \leq \\
 & \frac{2(\ell+1)\max\{1,\max_{i=0:d}\{\|M_i\|_2\}\}}{|\lambda_0|^{d-\ell}\min\{\|A_0\|_2,\|A_d\|_2\}} .
\end{align*}
Hence, we have the upper bound
\begin{align*}
&\frac{\mathrm{coeff\,cond}_{\mathcal{L}}(\lambda_0)}{\mathrm{coeff\,cond}_{P}(\lambda_0)}\leq \\
&\hspace{1.6cm}
\frac{2(\ell+1)\max\{1,\max_{i=0:d}\{\|M_i\|_2\}\}}{\min\{\|A_0\|_2,\|A_d\|_2\}}\frac{\|G(\lambda_0;\epsilon,\eta,M)x\|_2}{|\lambda_0|^{d-\ell}\|x\|_2}\frac{\|G(\overline{\lambda_0};\eta,\epsilon,M^*)y\|_2}{|\lambda_0|^{d-\ell}\|y\|_2}.
\end{align*}
Then, recall $d-\ell=\epsilon\ell+\eta\ell$ and the bounds in Lemma \ref{lemma:bound}, and notice
\begin{align}\label{eq:X1}
\begin{split}
\frac{\|G(\lambda_0;\epsilon,\eta,M)x\|_2^2}{|\lambda_0|^{2(d-\ell)}\|x\|_2^2}=&
\frac{1}{\|x\|_2^2}\,{\Big\|} \begin{bmatrix} \lambda_0^{-\epsilon\ell}\Lambda_\epsilon(\lambda_0^\ell)\otimes x \\ -\lambda_0^{\ell-d}S_\eta(\lambda_0^\ell)M(\lambda_0)(\Lambda_\epsilon(\lambda_0^\ell)\otimes x) \end{bmatrix} {\Big\|}_2^2 \leq \\
&\frac{1}{\|x\|_2^2} \left( \|\lambda_0^{-\epsilon\ell}\Lambda_\epsilon(\lambda_0^\ell)\|_2^2\|x\|_2^2+\right. \\
&\hspace{1.5cm}\left.\|\lambda_0^{-\eta\ell+\ell}S_\eta(\lambda_0^\ell)\|_2^2 \|\lambda_0^{-\ell} M(\lambda_0)\|_2^2 \|\lambda_0^{-\epsilon\ell}\Lambda_\epsilon(\lambda_0^\ell)\|_2^2\|x\|_2^2 \right)\leq \\
&(\epsilon+1)+\eta^2(\epsilon+1)\left(\sum_{i=0}^\ell \|M_i\|_2\right)^2 \leq \\
&(\epsilon+1)\left(1+\eta^2(\ell+1)\sum_{i=0}^\ell \|M_i\|_2^2\right).
\end{split}
\end{align}
Analogously, we can obtain the upper bound
\begin{equation}\label{eq:X2}
\frac{\|G(\overline{\lambda_0};\eta,\epsilon,M^*)y\|_2^2}{|\lambda_0|^{2(d-\ell)}\|y\|_2^2}\leq 
(\eta+1)\left(1+\epsilon^2(\ell+1)\sum_{i=0}^\ell \|M_i\|_2^2\right).
\end{equation}
The desired result now follows easily.
\end{proof}

%Notice that the bounds \eqref{eq:ratio L-P} and \eqref{eq:ratio L-P norm} are eigenvalue-independent, and, so, they can be computed or estimated.

Were the bounds in Theorem \ref{thm:ratio L-P}  moderate, the block Kronecker $\ell$-ification $\mathcal{L}(\lambda)$ would be about as well conditioned as the polynomial $P(\lambda)$ itself.
In Proposition \ref{prop:bound norm M}, we show that a necessary condition for having moderate bounds \eqref{eq:ratios of cond numbers2} and \eqref{eq:ratios of cond numbers} is  $\max_{i=0:d}\{\|A_i\|_2\}$ to be moderate. 
%Luckily, this condition can be always guaranteed just by a simple scaling of the original matrix polynomial, i.e., by considering the matrix polynomial $P(\lambda)/\max_{i=0:d}\{\|A_i\|_2\}$.

%Ideally, one would like to identify those $\ell$-ifications such us the upper bound in Theorem \ref{thm:ratio L-P} is moderate.
%%These $\ell$-ification would guarantee high relative accuracy for well-conditioned eigenvalues.
%
\begin{proposition}\label{prop:bound norm M}
Let $P(\lambda)$ be an $n\times n$ matrix polynomial as in \eqref{eq:poly} of degree $d=k(\epsilon+\eta+1)$, and let $M(\lambda)=\sum_{i=0}^\ell M_i\lambda^i$ be an $(\eta+1)n\times (\epsilon+1)n$ grade-$\ell$ matrix polynomial satisfying \eqref{eq:P}.
Then, 
%\[
%\sum_{i=0}^\ell \|M_i\|_2 \geq \frac{1}{\max\{\epsilon+1,\eta+1\}}\max_{i=0:d}\{\|A_i\|_2\}.
%\]
%Alternative statement:
\[
\max_{i=0:\ell}\{ \|M_i\|_2 \} \geq \frac{1}{2\max\{\epsilon+1,\eta+1\}}\max_{i=0:d}\{\|A_i\|_2\}.
\]
Hence, the upper bounds \eqref{eq:ratios of cond numbers2} and \eqref{eq:ratios of cond numbers} can potentially show a cubic dependence on  $\max_{i=0:d}\{\|A_i\|_2\}$.
\end{proposition}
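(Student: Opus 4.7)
The plan is to exploit the explicit formulas \eqref{eq:condition_coeff1} and \eqref{eq:condition_coeff2} from part (iii) of Theorem~\ref{thm:characterization of solutions}, which express every coefficient $A_i$ of $P(\lambda)$ as a sum of $n\times n$ block entries drawn from the matrix coefficients $M_0, M_1, \ldots, M_\ell$ of $M(\lambda)$. Once each $A_i$ is exhibited as a short sum of such blocks, the triangle inequality combined with Lemma~\ref{lemma:norm bound} immediately produces a lower bound on $\max_s\|M_s\|_2$ in terms of $\max_i\|A_i\|_2$.

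First I would count the number of summands. Each of the sums appearing in \eqref{eq:condition_coeff1} and \eqref{eq:condition_coeff2} ranges over the block entries on a single anti-diagonal $i+j=\mathrm{const}$ of an $(\eta+1)\times(\epsilon+1)$ block array. Any such anti-diagonal contains at most $\min\{\epsilon+1,\eta+1\}$ entries, so \eqref{eq:condition_coeff2} writes $A_{d-s\ell-t}$ as a sum of at most $\min\{\epsilon+1,\eta+1\}$ block entries of a single $M_{\ell-t}$, whereas \eqref{eq:condition_coeff1} writes $A_{d-t\ell}$ as a sum of at most $2\min\{\epsilon+1,\eta+1\}$ block entries drawn from $M_\ell$ and $M_0$. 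Since every block entry satisfies $\|[M_t]_{ij}\|_2\le\|M_t\|_2\le \max_{s=0:\ell}\|M_s\|_2$ by Lemma~\ref{lemma:norm bound}, the triangle inequality gives
\[
\|A_i\|_2 \;\le\; 2\min\{\epsilon+1,\eta+1\}\,\max_{s=0:\ell}\|M_s\|_2 \;\le\; 2\max\{\epsilon+1,\eta+1\}\,\max_{s=0:\ell}\|M_s\|_2
\]
uniformly in $i\in\{0,1,\ldots,d\}$, which rearranges to the stated inequality after taking the maximum over $i$.

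For the second assertion, I would argue that under the bound just proved, the right-hand sides of \eqref{eq:ratio L-P} and \eqref{eq:ratio L-P norm} (viewed as functions of $\max_s\|M_s\|_2$ in the regime $\max_s\|M_s\|_2\gg 1$) grow like the product of three factors each of order $\max_s\|M_s\|_2$: one from $\max\{1,\max_s\|M_s\|_2\}$ and one from each of the square-root factors $(1+c_\bullet(\ell+1)\sum_s\|M_s\|_2^2)^{1/2}$. So the bounds depend cubically on $\max_s\|M_s\|_2$, and since this quantity is forced to be at least of order $\max_i\|A_i\|_2$ by the first part of the proposition, the whole expression exhibits a potential cubic dependence on $\max_i\|A_i\|_2$.

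The only slightly delicate point will be being careful with the indexing convention of the block entries in \eqref{eq:condition_coeff1}--\eqref{eq:condition_coeff2} so that the anti-diagonal length $\min\{\epsilon+1,\eta+1\}$ is indeed an upper bound for the number of summands in every case, including the boundary values of $t$ and $s$; once this accounting is done, the remainder is routine.
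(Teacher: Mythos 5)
Your proposal is correct and follows essentially the same route as the paper: use part (iii) of Theorem~\ref{thm:characterization of solutions} to write each $A_i$ as a sum of at most two block anti-diagonals of the coefficients $M_t$, bound each block by $\|M_t\|_2$ via Lemma~\ref{lemma:norm bound}, and apply the triangle inequality (your anti-diagonal count $\min\{\epsilon+1,\eta+1\}$ is slightly sharper than the paper's $\max\{\epsilon+1,\eta+1\}$, but you then relax it to the stated constant). Your justification of the cubic dependence via the product structure of the bounds in Theorem~\ref{thm:ratio L-P} is also consistent with, and indeed somewhat more explicit than, the paper's remark.
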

\begin{proof}
By part (iii) in Theorem \ref{thm:characterization of solutions}, notice that each matrix coefficient $A_i$ satisfies either an equation of the form $A_i = \sum_{i+j=c}[M_\ell]_{ij} + \sum_{i+j=c-1}[M_0]_{ij}$, for some constant $c$, or of the form $A_i = \sum_{i+j=c_1}[M_{c_2}]_{ij}$, for some constants $c_1$ and $c_2$. 
In the former case, we have
\begin{align*}
\|A_i\|_2 \leq & \sum_{i+j=c}\|[M_\ell]_{ij}\|_2 + \sum_{i+j=c-1}\|[M_0]_{ij} \|_2 \leq \\
& \max\{\epsilon+1,\eta+1\}\left(\max_{ij}\{\|[M_\ell]_{ij}\|_2 \}+\max_{ij}\{\|[M_0]_{ij}\|_2 \} \right) \leq \\
&\max\{\epsilon+1,\eta+1\}\left(\|M_\ell\|_2+\|M_0\|_2 \right)\leq 2\max\{\epsilon+1,\eta+1\}\max_{i=0:\ell}\{|M_i\|_2\}.
\end{align*}
In the latter case, we have
\begin{align*}
\|A_i\|_2\leq & \sum_{i+j=c_1}\|[M_{c_2}]_{ij}\|\leq \max\{\epsilon+1,\eta+1\}\max_{ij}\{\|[M_{c_2}]_{ij}\|_2\}\leq \\
&\max\{\epsilon+1,\eta+1\}\|M_{c_2}\|_2 \leq \max\{\epsilon+1,\eta+1\}\max_{i=0:\ell}\{|M_i\|_2\}.
\end{align*}
Hence, $\max_{i=0:d}\{\|A_i\|_2\}\leq 2\max\{\epsilon+1,\eta+1\} \max_{i=0:\ell}\{|M_i\|_2\}$, as we wanted to show.
\end{proof}

As a consequence of Theorem \ref{thm:ratio L-P} and Proposition \ref{prop:bound norm M}, a necessary, but not sufficient,  condition under which the upper bounds \eqref{eq:ratios of cond numbers2} and \eqref{eq:ratios of cond numbers} could be moderate is  the condition \begin{equation}\label{eq:scaling condition1}
\max_{i=0:d}\{\|A_i\|_2\}\approx 1.
\end{equation}
Notice that  \eqref{eq:scaling condition1} is very mild,  since it can be always achieved by dividing the original matrix polynomial by a number.

In the following section, we particularize the upper bounds \eqref{eq:ratios of cond numbers2} and \eqref{eq:ratios of cond numbers} to the case when $\mathcal{L}(\lambda)$ is a block Kronecker companion form.
We will show that  \eqref{eq:scaling condition1} is sufficient to guarantee a moderate upper bound \eqref{eq:ratios of cond numbers}.
In other words, scaling $P(\lambda)$ so that \eqref{eq:scaling condition1} is satisfied guarantees all block Kronecker companion forms to be about as well conditioned in the normwise sense as the polynomial $P(\lambda)$ itself.
Additionally, we will show that the conditions
\begin{equation}\label{eq:scaling condition2}
\max_{i=0:d}\{\|A_i\|_2\}\approx 1 \quad \quad \mbox{and} \quad \quad \min\{\|A_0\|_2,\|A_d\|_2\}\approx 1
\end{equation}
 are sufficient to guarantee a moderate upper bound \eqref{eq:ratios of cond numbers2}.
In other words, assuming the conditions in \eqref{eq:scaling condition2}, block Kronecker companion forms are  optimally conditioned in the more stringent coefficientwise sense.

%\begin{remark}
%We cannot obtain a lower bound on $\sum_{i=0}^\ell \|M_i\|_2$ for a matrix polynomial $M(\lambda)=\sum_{i=0}^\ell$ satisfying \eqref{eq:P} because $M(\lambda)$ may contain arbitrary matrices (i.e., independent of the matrix coefficients $A_i$) of any norm. 
%\end{remark}

\section{The conditioning of companion $\ell$-ifications}
\label{sec:conditioning2}
This section contains one of the main results of this work, Theorem \ref{thm:main1}. 
We show that block Kronecker companion forms (recall their definition given in Section \ref{sec:companion forms})  are optimally conditioned in the normwise sense if condition \eqref{eq:scaling condition1} holds, and optimally conditioned in the coefficientwise sense if the conditions in \eqref{eq:scaling condition2} hold.

Before stating the main theorems, we present Lemma \ref{lemma:bound L and M}.
\begin{lemma}\label{lemma:bound L and M}
Let $P(\lambda)$ be an $n\times n$ matrix polynomial as in \eqref{eq:poly} of degree $d$.
Assume  $d$ is divisible by $\ell$, and let $\mathcal{L}(\lambda)=\sum_{i=0}^\ell \mathcal{L}_i\lambda^i$ as in \eqref{eq:block Kronecker poly} be a block Kronecker companion form of $P(\lambda)$.
If we consider $M_t$, for $t=0,\hdots,\ell$, as an $(\eta+1)\times (\epsilon+1)$ block-matrix with $n\times n$ blocks $[M_{t}]_{ij}$, then
\begin{equation}\label{eq:cond-coeff M}
\max_{i,j,t}\{\|\,[M_{t}]_{ij}\,\|_2\}\leq\max\{1,\max_{i=0:d}\{\|A_i\|_2\}\}.
\end{equation}
%and
%\begin{equation}\label{eq:cond-coeff2}
%\max_{i,j,t}\{\|\,[L_{t}]_{ij}\,\|_2\}=\max\{1,\max_{i=0:d}\{\|A_i\|_2\}\},
%\end{equation}
\end{lemma}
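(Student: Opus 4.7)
The plan is to argue directly from the definitions, since the lemma is essentially a translation of what it means for $\mathcal{L}(\lambda)$ to be a companion form. Recall that in Definition \ref{def:companion form} a companion $\ell$-ification is characterized by the requirement that every block entry of every coefficient matrix $\mathcal{L}_t$ belong to the finite set $\{0_n, I_n\}\cup\{A_0,A_1,\ldots,A_d\}$. Since a block Kronecker companion form is in particular a companion $\ell$-ification, this constraint restricts to the $(1,1)$ block: every block entry of every coefficient matrix $M_t$ of $M(\lambda)=\sum_{t=0}^\ell M_t\lambda^t$ must itself lie in this same set. That is, for every triple $(i,j,t)$ we have
\[
[M_t]_{ij}\in\{0_n,I_n\}\cup\{A_k : k=0,\ldots,d\}.
\]

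Once this observation is in place, the bound follows by simply computing the 2-norm in each of the three possible cases: $\|0_n\|_2=0$, $\|I_n\|_2=1$, and $\|A_k\|_2\leq \max_{i=0:d}\{\|A_i\|_2\}$. Taking the supremum over the three candidates yields
\[
\|[M_t]_{ij}\|_2 \leq \max\bigl\{0,\,1,\,\max_{i=0:d}\{\|A_i\|_2\}\bigr\}=\max\bigl\{1,\,\max_{i=0:d}\{\|A_i\|_2\}\bigr\},
\]
and, since the right-hand side is independent of $(i,j,t)$, the same bound holds for the maximum on the left side of \eqref{eq:cond-coeff M}.

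There is essentially no obstacle of a technical nature; the only point requiring a moment of care is the translation between the definition of companion form phrased in terms of the coefficient matrices $\mathcal{L}_t$ and the corresponding statement for the coefficient matrices $M_t$ of $M(\lambda)$. This can be checked concretely against the canonical examples $\mathcal{L}_{\epsilon,\eta}(\lambda)$, $C_1^\ell(\lambda)$, and $C_2^\ell(\lambda)$: expanding the polynomial blocks $B_j(\lambda)$ defined in \eqref{eq:B_poly} in powers of $\lambda$ shows that each coefficient appearing is a single $A_k$ (and, if present, identity blocks $I_n$ in $M(\lambda)$ simply contribute $\|I_n\|_2=1$), which fits the claimed form.
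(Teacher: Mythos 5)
Your proof is correct and follows essentially the same route as the paper, whose proof is the one-line observation that each block entry $[M_t]_{ij}$ equals $0$, $I_n$, or some $A_k$, so its $2$-norm is bounded by $\max\{1,\max_{i=0:d}\{\|A_i\|_2\}\}$. The translation you make from the coefficients $\mathcal{L}_t$ to the coefficients $M_t$ is immediate since $M_t$ is the $(1,1)$ block of $\mathcal{L}_t$, so no gap remains.
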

\begin{proof}
The proof follows immediately from the fact that each block entry $ [M_{t}]_{ij}$ equals either $0$, $I_n$ or $A_i$, for some $i\in\{0,\hdots,d\}$.
\end{proof}
%as this will be useful for obtaining more meaning full upper bounds on the eigenvalue condition number ratio \eqref{eq:ratios of cond numbers}.

%In particular, these two conditions hold for the Frobenius-like $\ell$-ifications and many Fiedler-like linearizations.

%Assuming $\mathcal{L}(\lambda)$ is a block Kronecker companion form, we obtain in  Theorem \ref{thm:ratio L-P} an upper bound on the eigenvalue condition number ratio \eqref{eq:ratios of cond numbers} in terms only of the norm of the coefficients of $P(\lambda)$.
%This new upper bound is much easier to interpret than the one in Theorem \ref{thm:ratio L-P} and will allow us to obtain conditions that guarantee a moderate ratio \eqref{eq:ratios of cond numbers}.
When $\mathcal{L}(\lambda)$ is a block Kronecker companion form, we can obtain upper  bounds on the ratios \eqref{eq:ratios of cond numbers2} and \eqref{eq:ratios of cond numbers} that depend essentially  on the norms of the matrix coefficients of the polynomial $P(\lambda)$.

\begin{theorem}\label{thm:ratio L-P companion}
Let $P(\lambda)$ be an $n\times n$ matrix polynomial as in \eqref{eq:poly} of degree $d$.
Assume $d$ is divisible by $\ell$, and let $\mathcal{L}(\lambda)$ as in \eqref{eq:block Kronecker poly} be a block Kronecker \emph{companion form} of $P(\lambda)$.  
If $\lambda_0$ is a simple, finite, nonzero eigenvalue of $P(\lambda)$, then
\begin{equation}\label{eq:ratio L-P 2}
\frac{\mathrm{coeff\,cond}_{L}(\lambda_0)}{\mathrm{coeff\,cond}_{P}(\lambda_0)}\leq 
16d^3(\epsilon+1)^{3/2}(\eta+1)^{3/2}\dfrac{\max\{1,\max_{i=0:d}\{\| A_i \|_2^3\} \}}{\min\{\|A_0\|_2,\|A_d\|_2\}},
\end{equation}
and
\begin{equation}\label{eq:ratio L-P 3}
\frac{\mathrm{coeff\,cond}_{L}(\lambda_0)}{\mathrm{norm\,cond}_{P}(\lambda_0)}\leq 
16d^3(\epsilon+1)^{3/2}(\eta+1)^{3/2}\frac{\max\{1,\max_{i=0:d}\{\| A_i \|_2^3\} \}}{\max_{i=0:d}\{\|A_i\|_2\}}.
\end{equation}
\end{theorem}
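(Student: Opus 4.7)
The plan is to specialize Theorem~\ref{thm:ratio L-P} by bounding the coefficient norms of the $(1,1)$-block $M(\lambda)=\sum_{i=0}^\ell M_i\lambda^i$ when $\mathcal{L}(\lambda)$ is a block Kronecker companion form. Setting $\rho:=\max\{1,\max_{i=0:d}\|A_i\|_2\}$, Lemma~\ref{lemma:bound L and M} bounds every $n\times n$ sub-block of every $M_t$ by $\rho$ in spectral norm. Applying Lemma~\ref{lemma:norm bound} to $M_t$ regarded as an $(\eta+1)\times(\epsilon+1)$ block matrix then yields
\[
\|M_t\|_2\leq\sqrt{(\eta+1)(\epsilon+1)}\,\rho \qquad \text{for } t=0,\ldots,\ell,
\]
whence $\max_i\|M_i\|_2\leq\sqrt{(\eta+1)(\epsilon+1)}\,\rho$ and $\sum_{i=0}^\ell\|M_i\|_2^2\leq(\ell+1)(\eta+1)(\epsilon+1)\rho^2$.

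Next I would substitute these estimates into the bound~\eqref{eq:ratio L-P} in Theorem~\ref{thm:ratio L-P}. To simplify each radical of the form $(1+\epsilon^2(\ell+1)\sum_i\|M_i\|_2^2)^{1/2}$, I would use the elementary inequality $1+c^2T\leq(c+1)^2T$ valid whenever $T\geq 1$. Here $T=(\ell+1)^2(\eta+1)(\epsilon+1)\rho^2\geq 1$, which yields the radical bound $(\epsilon+1)^{3/2}(\eta+1)^{1/2}(\ell+1)\rho$, and symmetrically $(\eta+1)^{3/2}(\epsilon+1)^{1/2}(\ell+1)\rho$ for the $\eta$-radical. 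Multiplying all factors in~\eqref{eq:ratio L-P} produces the preliminary bound
\[
\frac{\mathrm{coeff\,cond}_{\mathcal{L}}(\lambda_0)}{\mathrm{coeff\,cond}_{P}(\lambda_0)}\leq\frac{2\,(\ell+1)^3(\epsilon+1)^3(\eta+1)^3\,\rho^3}{\min\{\|A_0\|_2,\|A_d\|_2\}}.
\]

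Finally, to put this in the form~\eqref{eq:ratio L-P 2} involving $d^3$, I would apply two arithmetic inequalities: $(\epsilon+1)(\eta+1)\leq(\epsilon+\eta+1)^2$ (so $(\epsilon+1)^{3/2}(\eta+1)^{3/2}\leq(\epsilon+\eta+1)^3$) and $(\ell+1)^3\leq 8\ell^3$ valid for $\ell\geq 1$. Since $d=\ell(\epsilon+\eta+1)$, these combine to $(\ell+1)^3(\epsilon+1)^3(\eta+1)^3\leq 8\,d^3(\epsilon+1)^{3/2}(\eta+1)^{3/2}$, which absorbs the leading $2$ into the constant $16$ and matches the exponents in~\eqref{eq:ratio L-P 2}; note that $\rho^3=\max\{1,\max_i\|A_i\|_2^3\}$. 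The normwise bound~\eqref{eq:ratio L-P 3} follows by repeating the argument starting from~\eqref{eq:ratio L-P norm} and using the trivial inequality $\|\begin{bmatrix} A_0 & \cdots & A_d\end{bmatrix}\|_2\geq\max_i\|A_i\|_2$ to control the denominator. The main obstacle is not analytic but bookkeeping: carefully tracking how factors of $(\epsilon+1)$, $(\eta+1)$, $(\ell+1)$, and $\rho$ accumulate through the radicals, so that the exponents collapse correctly to the ones stated in~\eqref{eq:ratio L-P 2} and~\eqref{eq:ratio L-P 3}.
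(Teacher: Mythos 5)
Your proposal is correct and follows essentially the same route as the paper: specialize Theorem~\ref{thm:ratio L-P} using Lemmas~\ref{lemma:norm bound} and~\ref{lemma:bound L and M} to get $\|M_t\|_2\leq\sqrt{(\epsilon+1)(\eta+1)}\max\{1,\max_i\|A_i\|_2\}$, then collapse the radicals and remaining factors by elementary inequalities (using $d=\ell(\epsilon+\eta+1)$) to reach the stated $16d^3(\epsilon+1)^{3/2}(\eta+1)^{3/2}$ bounds, with the normwise case handled from \eqref{eq:ratio L-P norm} in the same way. The only difference from the paper is the bookkeeping of the elementary inequalities (you use $1+c^2T\leq(c+1)^2T$ and absorb powers at the end, while the paper bounds each radical directly by $2d(\epsilon+1)^{1/2}(\eta+1)^{1/2}\max\{1,\max_i\|A_i\|_2\}$), which is immaterial.
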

\begin{proof}
We only prove \eqref{eq:ratio L-P 2}.
The upper bound \eqref{eq:ratio L-P 3} follows from \eqref{eq:ratio L-P norm} using a similar argument.

First, observe that Lemmas \ref{lemma:norm bound} and \ref{lemma:bound L and M} imply
\begin{equation}\label{eq:aux0}
\|M_i\|_2\leq \sqrt{(\epsilon+1)(\eta+1)}\max\{1,\max_{i=0:d}\{\|A_i\|_2\} \},
\end{equation}
for $i=0,1,\hdots \ell$.
So, we have
\begin{equation}\label{eq:aux1}
\frac{\max\{1,\max_{i=0:\ell}\{\|M_i\|_2\}\}}{\min\{\|A_0\|_2,\|A_d\|_2\}} \leq \sqrt{(\epsilon+1)(\eta+1)}\frac{\max\{1,\max_{i=0:d}\{\|A_i\|_2\}\}}{\min\{\|A_0\|_2,\|A_d\|_2\}}.
\end{equation}
Then, using \eqref{eq:aux0}, we get the inequality
\begin{align}\label{eq:aux2}
\begin{split}
&\left( 1+\epsilon^2(\ell+1)\sum_{i=0}^\ell \|M_i\|_2^2\right)^{1/2}\leq  \\
&\left(1+\epsilon^2(\ell+1)^2(\epsilon+1)(\eta+1) \max\{1,\max_{i=0:d}\{\|A_i\|_2^2\}\} \right)^{1/2} \leq  \\ 
&\,
(\epsilon+1)^{1/2}(\eta+1)^{1/2}(1+\epsilon^2(\ell+1)^2)^{1/2}\max\{1,\max_{i=0:d}\{\|A_i\|_2\}\}\leq  \\
&\,
2d(\epsilon+1)^{1/2}(\eta+1)^{1/2}\max\{1,\max_{i=0:d}\{\|A_i\|_2\}\},
\end{split}
\end{align}
where, to get the last inequality, we have used  $\epsilon\ell \leq d$ and some elementary inequalities.
An analogous argument yields
\begin{equation}\label{eq:aux3}
\left( 1+\eta^2(\ell+1)\sum_{i=0}^\ell \|M_i\|_2^2\right)^{1/2}\leq 2d(\epsilon+1)^{1/2}(\eta+1)^{1/2}\max\{1,\max_{i=0:d}\{\|A_i\|_2\}\}.
\end{equation} 
Finally, inserting the inequalities \eqref{eq:aux1}, \eqref{eq:aux2} and \eqref{eq:aux3} in \eqref{eq:ratio L-P}, and using $(\ell+1)(\epsilon+1)^{1/2}(\eta+1)^{1/2}\leq 2d$, we obtain the desired result.
\end{proof}

\begin{remark}
The factor $16d^3(\epsilon+1)^{3/2}(\eta+1)^{3/2}$ in the upper bounds \eqref{eq:ratio L-P 2} and \eqref{eq:ratio L-P 3} may be pessimistic for some block Kronecker companion forms.
This factor takes into account the worst case scenario in which the matrices $M_i$ are dense block-matrices. 
One can obtain tighter constants, for example, particularizing the analysis to block Kronecker companion forms such that the matrices $M_i$ are of low block-bandwidth. 
In this case, the constant reduces essentially to $d^3$, result that is coherent with other analyses;  see \cite[Theorem 5.1]{tridiagonal}. 
\end{remark}

As an immediate corollary of Theorem \ref{thm:ratio L-P companion}, we obtain Theorem \ref{thm:main1}, which is one of the main results of this work.
Theorem \ref{thm:main1} gives conditions on the coefficients of $P(\lambda)$ that guarantee that \emph{all} block Kronecker companion forms of $P(\lambda)$ are about as well conditioned as the polynomial itself.
\begin{theorem}\label{thm:main1}
Let $P(\lambda)$ be an $n\times n$ matrix polynomial as in \eqref{eq:poly} of degree $d$.
Assume $d$ is divisible by $\ell$, and let $\mathcal{L}(\lambda)$ as in \eqref{eq:block Kronecker poly} be a block Kronecker companion form of $P(\lambda)$.  
If $\lambda_0$ is a simple, finite, nonzero eigenvalue of $P(\lambda)$, then the following statements hold.
\begin{itemize}
\item[\rm(a)] If $\max_{i=0:d}\{\|A_i\|_2\} =1$, then
\[
\frac{\mathrm{coeff\,cond}_{\mathcal{L}}(\lambda_0)}{\mathrm{norm\,cond}_{P}(\lambda_0)} \lesssim 1.
\]
In other words, under the scaling $\max_{i=0:d}\{\|A_i\|_2\} =1$ assumption, block Kronecker companion forms are optimally conditioned in the normwise sense.
\item[\rm(b)] If $\max_{i=0:d}\{\|A_i\|_2\} =1$ and $\min\{\|A_0\|_2,\|A_d\|_2\}=1$, then
\[
\frac{\mathrm{coeff\,cond}_{\mathcal{L}}(\lambda_0)}{\mathrm{coeff\,cond}_{P}(\lambda_0)} \lesssim 1.
\]
In other words, under the scaling $\max_{i=0:d}\{\|A_i\|_2\} =1$ assumption, block Kronecker companion forms are optimally conditioned in the coefficientwise sense provided $\min\{\|A_0\|_2,\|A_d\|_2\}$ is not too small.
\end{itemize}
\end{theorem}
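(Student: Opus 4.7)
The plan is to derive Theorem \ref{thm:main1} as an immediate corollary of Theorem \ref{thm:ratio L-P companion}, since all the hard analytic work (the one-sided factorizations, the eigenvector formulas, the bounds in Lemma \ref{lemma:bound}, and the norm estimates coming from the companion structure via Lemma \ref{lemma:bound L and M}) has already been carried out. The key observation is that the right-hand sides of \eqref{eq:ratio L-P 2} and \eqref{eq:ratio L-P 3} each split cleanly into a purely combinatorial factor that depends only on $d$, $\epsilon$, $\eta$, $\ell$ and a scalar factor built from the coefficient norms of $P(\lambda)$; under the scaling hypotheses the latter factor collapses to one.

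For part (a), I would start from \eqref{eq:ratio L-P 3}. Under $\max_{i=0:d}\{\|A_i\|_2\}=1$, every coefficient norm is at most $1$, so $\max\{1,\max_{i=0:d}\{\|A_i\|_2^3\}\}=1$ and $\max_{i=0:d}\{\|A_i\|_2\}=1$. Substituting yields
\[
\frac{\mathrm{coeff\,cond}_{\mathcal{L}}(\lambda_0)}{\mathrm{norm\,cond}_{P}(\lambda_0)}\leq 16\, d^{3}(\epsilon+1)^{3/2}(\eta+1)^{3/2},
\]
which is a modest polynomial in the structural parameters and therefore $\lesssim 1$ in the sense used throughout the paper.

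For part (b), I would likewise start from \eqref{eq:ratio L-P 2}. The scaling $\max_{i=0:d}\{\|A_i\|_2\}=1$ again makes the numerator $\max\{1,\max_{i=0:d}\{\|A_i\|_2^3\}\}$ equal to $1$, while the extra hypothesis $\min\{\|A_0\|_2,\|A_d\|_2\}=1$ forces the denominator to be $1$ as well. Substitution then gives exactly the same bound
\[
\frac{\mathrm{coeff\,cond}_{\mathcal{L}}(\lambda_0)}{\mathrm{coeff\,cond}_{P}(\lambda_0)}\leq 16\, d^{3}(\epsilon+1)^{3/2}(\eta+1)^{3/2}\lesssim 1.
\]

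Because Theorem \ref{thm:ratio L-P companion} has already absorbed all the nontrivial estimates, there is no real obstacle here—the proof is pure bookkeeping. The only interpretive point worth noting explicitly is that the combinatorial prefactor $16\,d^{3}(\epsilon+1)^{3/2}(\eta+1)^{3/2}$ is regarded as a modest constant in this setting (and, as the remark following Theorem \ref{thm:ratio L-P companion} points out, can be tightened for block Kronecker companion forms whose middle blocks have low block bandwidth). Hence the hypotheses \eqref{eq:scaling condition1} and \eqref{eq:scaling condition2} are exactly what is needed to turn the general bound of Theorem \ref{thm:ratio L-P companion} into the qualitative statements of optimal normwise and coefficientwise conditioning claimed in parts (a) and (b).
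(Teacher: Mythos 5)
Your proposal is correct and matches the paper exactly: the paper obtains Theorem \ref{thm:main1} as an immediate corollary of Theorem \ref{thm:ratio L-P companion}, precisely by setting the coefficient-norm factors in \eqref{eq:ratio L-P 2} and \eqref{eq:ratio L-P 3} to one under the stated scaling hypotheses and absorbing the remaining prefactor $16d^3(\epsilon+1)^{3/2}(\eta+1)^{3/2}$ into the modest constant implicit in ``$\lesssim$''. No gaps; your explicit remark on how that prefactor is interpreted is the same point the paper makes in the remark following Theorem \ref{thm:ratio L-P companion}.
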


The result in part (a) of Theorem \ref{thm:main1} is entirely consistent with the results in \cite{DLPV,DPV} on the backward stability of solving PEPs by using  block Kronecker companion linearizations, as we now explain.
The analyses in \cite{DLPV,DPV} show that solving a PEP by applying a backward stable algorithm to a block Kronecker companion linearization is backward stable for the PEP under the scaling condition $\|\begin{bmatrix} A_0 & \cdots & A_d \end{bmatrix} \|_F \approx 1$. 
This means that the computed eigenvalues are the exact eigenvalues of a perturbed matrix polynomial
\[
P(\lambda)+\Delta P(\lambda) = \sum_{i=0}^d(A_i+\Delta A_i)\lambda^i
\]
where $\|\begin{bmatrix}\Delta A_0 & \cdots & \Delta A_d \end{bmatrix}\|_F = \mathcal{O}(u)\|\begin{bmatrix} A_0 & \cdots & A_d \end{bmatrix} \|_F$. 
Hence, the eigenvalue relative errors can be bounded as
\[
\frac{|\lambda_i-\widetilde{\lambda_i}|}{|\lambda_i|}=\mathcal{O}(u\cdot \mathrm{norm\,cond}_P(\lambda_i)), 
\]
where $\lambda_i$ are the exact eigenvalues of $P(\lambda)$ and $\widetilde{\lambda_i}$ are the computed eigenvalues.
In conclusion, \cite{DLPV,DPV} show that well-conditioned eigenvalues in normwise sense (i.e., $\mathrm{norm\,cond}_P(\lambda)\approx 1$) can be computed with high relative accuracy if we apply a backward stable algorithm to the block Kronecker companion linearization.
The same conclusion can be drawn from  part (a) in Theorem \ref{thm:main1} .

\section{Comparing the coefficientwise conditioning of different block Kronecker companion forms}
\label{sec:conditioning3}
%As we have seen,  block Kronecker $\ell$-ifications may increase the conditioning of eigenvalues with respect relative perturbations. 
%Hence, it is very natural to study whether some $\ell$-ifications are better than others---from the conditioning point of view.

In the coefficientwise sense, even after scaling the matrix polynomial $P(\lambda)$ so that $\max_{i=0:d}\{\|A_i\|_2\}=1$, block Kronecker companion forms may be potentially much worse conditioned than the polynomial $P(\lambda)$ when the quantity $\min\{\|A_0\|_2,\|A_d\|_2\}$ is much smaller than one.
For this reason, we investigate in this section whether  some block Kronecer companion forms are preferable to others, from an eigenvalue conditioning point of view.

\begin{theorem}\label{thm:ratio different ell-ifications}
Let $P(\lambda)$ be an $n\times n$ matrix polynomial as in \eqref{eq:poly} of degree $d$.
Assume $d$ is divisible both by $\ell$ and $r$.
Let $\mathcal{L}(\lambda)=\sum_{i=0}^\ell \mathcal{L}_i\lambda^i$  be an $(\epsilon_1,n,\eta_1,n)$-block Kronecker companion $\ell$-ification of $P(\lambda)$, and let $\mathcal{R}(\lambda)=\sum_{i=0}^r\mathcal{R}_i\lambda^i$ be an $(\epsilon_2,n,\eta_2,n)$-block Kronecker companion $r$-ification of $P(\lambda)$.
If $\lambda_0$ is a finite, nonzero and simple eigenvalue of $P(\lambda)$, then
\begin{align*}
\frac{1}{16d^3(\epsilon_1+1)^{3/2}(\eta_1+1)^{3/2}\max\{1,\max_{i=0:d}\{\|A_i\|_2^3\}}&\leq \\ 
&\hspace{-5.2cm}\frac{\mathrm{coeff\,cond}_\mathcal{R}(\lambda_0)}{\mathrm{coeff\,cond}_\mathcal{L}(\lambda_0)} \leq 16d^3(\epsilon_2+1)^{3/2}(\eta_2+1)^{3/2}\max\{1,\max_{i=0:d}\{\|A_i\|_2^3\}\}.
\end{align*}
%and 
%\begin{align*}
%\frac{1}{8\sqrt{2}d^4(\epsilon_1+1)^{3/2}(\eta_1+1)^{3/2}\max\{1,\max_{i=0:d}\{\|A_i\|_2^2\}}&\leq \\ 
%&\hspace{-5.9cm}\frac{\mathrm{norm\,cond}_\mathcal{R}(\lambda_0)}{\mathrm{norm\,cond}_\mathcal{L}(\lambda_0)} \leq 8\sqrt{2}d^4(\epsilon_2+1)^{3/2}(\eta_2+1)^{3/2}\max\{1,\max_{i=0:d}\{\|A_i\|_2^2\}\}.
%\end{align*}
\end{theorem}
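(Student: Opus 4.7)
My plan is to prove the claimed upper bound directly and then derive the lower bound from it by symmetry: swapping $\mathcal{L}\leftrightarrow\mathcal{R}$ (and thus $(\epsilon_1,\eta_1)\leftrightarrow(\epsilon_2,\eta_2)$, $\ell\leftrightarrow r$) in the upper bound gives the corresponding statement for $\mathrm{coeff\,cond}_\mathcal{L}/\mathrm{coeff\,cond}_\mathcal{R}$, whose reciprocal is exactly the claimed lower bound. To establish the upper bound I begin from the factorization
\[
\frac{\mathrm{coeff\,cond}_\mathcal{R}(\lambda_0)}{\mathrm{coeff\,cond}_\mathcal{L}(\lambda_0)} = \frac{\sum_{i=0}^r |\lambda_0|^i\|\mathcal{R}_i\|_2}{\sum_{i=0}^\ell |\lambda_0|^i\|\mathcal{L}_i\|_2} \cdot \frac{\|z_\mathcal{R}\|_2\|w_\mathcal{R}\|_2}{\|z_\mathcal{L}\|_2\|w_\mathcal{L}\|_2} \cdot \frac{|w_\mathcal{L}^*\mathcal{L}'(\lambda_0)z_\mathcal{L}|}{|w_\mathcal{R}^*\mathcal{R}'(\lambda_0)z_\mathcal{R}|}
\]
and split into the cases $|\lambda_0|\leq 1$ and $|\lambda_0|>1$. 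In the first case I select eigenvectors of both $\mathcal{L}(\lambda)$ and $\mathcal{R}(\lambda)$ via the $H$-formulas of Theorems \ref{thm:right eigenvectors}(a) and \ref{thm:left eigenvectors}(a); in the second case via the $G$-formulas of parts (b). By Lemma \ref{lemma:relating cond numb}, the third factor then equals $1$ when $|\lambda_0|\leq 1$ and $|\lambda_0|^{r-\ell}$ when $|\lambda_0|>1$.

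For $|\lambda_0|\leq 1$ the $\mathcal{R}$-related quantities are upper-bounded exactly as in the proof of Theorem \ref{thm:ratio L-P companion}: the numerator of the first factor is at most $(r+1)(\epsilon_2+\eta_2+1)\max\{1,\max_{i=0:d}\|A_i\|_2\}$ using Lemmas \ref{lemma:norm bound} and \ref{lemma:bound L and M}, and the $H$-norm estimates from the proof of Theorem \ref{thm:ratio L-P}, combined with $\|M_{\mathcal{R},i}\|_2\leq \sqrt{(\epsilon_2+1)(\eta_2+1)}\max\{1,\max_{i=0:d}\|A_i\|_2\}$, control the second factor. For the denominator quantities I use the lower bound $\sum_{i=0}^\ell |\lambda_0|^i\|\mathcal{L}_i\|_2\geq \|\mathcal{L}_0\|_2\geq 1$, which holds because the constant part $L_{\epsilon_1}(0)\otimes I_n$ of the off-diagonal block of $\mathcal{L}$ contains an identity submatrix, together with the elementary eigenvector bounds $\|H(\lambda_0;\epsilon_1,\eta_1,M_\mathcal{L})x\|_2\geq \|x\|_2$ and $\|H(\overline{\lambda_0};\eta_1,\epsilon_1,M_\mathcal{L}^*)y\|_2\geq \|y\|_2$, which follow because the last block of $\Lambda_{\epsilon_1}(\lambda_0^\ell)\otimes x$ equals $x$.

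For $|\lambda_0|>1$ the analogous $G$-norm upper bounds carry factors of $|\lambda_0|^{d-r}$ via Lemma \ref{lemma:bound} parts (a2)--(c2), while the denominator lower bounds become $\sum_{i=0}^\ell|\lambda_0|^i\|\mathcal{L}_i\|_2\geq |\lambda_0|^\ell\|\mathcal{L}_\ell\|_2\geq |\lambda_0|^\ell$ (the $\lambda^\ell$-coefficient of $L_{\epsilon_1}(\lambda^\ell)$ has $1$'s on a superdiagonal) and $\|G(\lambda_0;\epsilon_1,\eta_1,M_\mathcal{L})x\|_2\geq |\lambda_0|^{d-\ell}\|x\|_2$ (the leading entry of $\lambda_0^{\eta_1\ell}(\Lambda_{\epsilon_1}(\lambda_0^\ell)\otimes x)$ is $\lambda_0^{d-\ell}x$), with the matching lower bound for the left eigenvector. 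Collecting the three factors, the $|\lambda_0|$ powers combine as $(r-\ell) + 2(\ell-r) + (r-\ell) = 0$ and cancel completely, while the remaining constants aggregate exactly as in the proof of Theorem \ref{thm:ratio L-P companion}, using $(r+1)(\epsilon_2+\eta_2+1)\leq 2d$, into the stated upper bound. The chief obstacle I expect is precisely this bookkeeping of $|\lambda_0|$-powers in the $|\lambda_0|>1$ case: verifying that the exponent from the third factor together with the $|\lambda_0|^{\ell-r}$ contributions from $\|G_\mathcal{R}x\|_2/\|G_\mathcal{L}x\|_2$ and its $y$-analog exactly cancel the $|\lambda_0|^{r-\ell}$ arising in the norm-sum ratio, so that the final estimate is genuinely uniform in $\lambda_0$.
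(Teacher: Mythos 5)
Your proposal is correct and follows essentially the same route as the paper's proof: the same three-factor decomposition of the ratio, the same case split $|\lambda_0|\le 1$ versus $|\lambda_0|>1$ with the $H$- and $G$-eigenvector formulas respectively, the same lower bounds $\|\mathcal{L}_0\|_2,\|\mathcal{L}_\ell\|_2\ge 1$ and $\|Hx\|_2\ge\|x\|_2$, $\|Gx\|_2\ge|\lambda_0|^{d-\ell}\|x\|_2$, the identical cancellation of $|\lambda_0|$-powers via Lemma \ref{lemma:relating cond numb}, and the lower bound of the theorem obtained by exchanging the roles of $\mathcal{L}$ and $\mathcal{R}$. The only differences are cosmetic (you bound $\max_i\|\mathcal{R}_i\|_2$ by $(\epsilon_2+\eta_2+1)\max\{1,\max_i\|A_i\|_2\}$ and use $(r+1)(\epsilon_2+\eta_2+1)\le 2d$, while the paper splits off the $(1,1)$ block; both yield the stated constant), so no gap.
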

\begin{proof}
%We will prove the inequalities in the case of coefficientwise condition numbers.
%The normwise case is proved in exactly the same manner, and its proof is omitted.
Throughout the proof, we assume $\ell\geq r$.
The case when $\ell<r$ is completely analogous and left to the reader.

Let $(y,\lambda_0,x)$, $(w_\mathcal{L},\lambda_0,z_\mathcal{L})$ and $(w_\mathcal{R},\lambda_0,z_\mathcal{R})$ be eigentriples of, respectively, the polynomial $P(\lambda)$, the $\ell$-ification $\mathcal{L}(\lambda)$ and the $r$-ification $\mathcal{R}(\lambda)$.
We have
\begin{equation}\label{eq:aux4}
\dfrac{\mathrm{coeff\,cond}_\mathcal{R}(\lambda_0)}{\mathrm{coeff\,cond}_\mathcal{L}(\lambda_0)}=\dfrac{\sum_{i=0}^r|\lambda_0|^i\|\mathcal{R}_i\|_2}{\sum_{i=0}^\ell|\lambda_0|^i\|\mathcal{L}_i\|_2}\cdot \dfrac{\|z_\mathcal{R}\|_2\|w_\mathcal{R}\|_2}{\|z_\mathcal{L}\|_2\|w_\mathcal{L}\|_2}\cdot\dfrac{|w^*_\mathcal{L}\mathcal{L}^\prime(\lambda_0)z_\mathcal{L}|}{|w^*_\mathcal{R}\mathcal{R}^\prime(\lambda_0)z_\mathcal{R}|}.
\end{equation}
To upper bound the ratio \eqref{eq:aux4}, we need to distinguish two cases, namely, the cases $|\lambda_0|>1$ and $|\lambda_0|\leq 1$.

Assume first $|\lambda_0|>1$. 
Notice that Lemma \ref{lemma:norm bound} implies $\|\mathcal{L}_\ell\|_2\geq 1$, because the leading matrix coefficient $\mathcal{L}_\ell$ has at least one block entry equal to $I_n$. 
If we denote by $N(\lambda)=\sum_{i=0}^r N_i\lambda^i$ the (1,1) block of $\mathcal{R}(\lambda)$, then, from Lemmas \ref{lemma:norm bound} and \ref{lemma:bound L and M}, we obtain
\begin{align}\label{eq:aux5}
\begin{split}
\dfrac{\sum_{i=0}^r|\lambda_0|^i\|\mathcal{R}_i\|_2}{\sum_{i=0}^\ell|\lambda_0|^i\|\mathcal{L}_i\|_2} \leq & \dfrac{\sum_{i=0}^r|\lambda_0|^i\|\mathcal{R}_i\|_2}{|\lambda_0|^\ell\|\mathcal{L}_\ell\|_2} \leq \dfrac{1}{|\lambda_0|^{\ell-r}}\sum_{i=0}^r|\lambda_0|^{i-r}\|\mathcal{R}_i\|_2 \leq \\
&\frac{1}{|\lambda_0|^{\ell-r}}(r+1)\max_{i=0:r}\{\|\mathcal{R}_i\|_2\} \leq \\ 
&\frac{2}{|\lambda_0|^{\ell-r}}(r+1)\max\{1,\max_{i=0:r}\{\|N_i\|_2\} \}\leq  \\
&\dfrac{2}{|\lambda_0|^{\ell-r}}\sqrt{(\epsilon_2+1)(\eta_2+1)}\max\{1,\max_{i=0:d}\{\|A_i\|_2\}\}.
\end{split}
\end{align}
Next, recall the definition of the matrix polynomial $G(\lambda;p,q,M)$ introduced in \eqref{eq:G}, and let us denote by $M(\lambda)$ the $(1,1)$ block of $\mathcal{L}(\lambda)$.
From part (b) of Theorems \ref{thm:right eigenvectors} and \ref{thm:left eigenvectors}, we have
\begin{align}\label{eq:aux6}
\begin{split}
\dfrac{\|z_\mathcal{R}\|_2\|w_\mathcal{R}\|_2}{\|z_\mathcal{L}\|_2\|w_\mathcal{L}\|_2} = &
\dfrac{\|G(\lambda_0;\epsilon_2,\eta_2,N)x\|_2\|G(\overline{\lambda_0};\eta_2,\epsilon_2,N^*)y\|_2}{\|G(\lambda_0;\epsilon_1,\eta_1,M)x\|_2\|G(\overline{\lambda_0};\eta_1,\epsilon_1,M^*)y\|_2}\leq  \\
& \dfrac{\|G(\lambda_0;\epsilon_2,\eta_2,N)x\|_2\|G(\overline{\lambda_0};\eta_2,\epsilon_2,N^*)y\|_2}{|\lambda_0|^{2(d-\ell)}\|x\|_2\|y\|_2} =  \\
&|\lambda_0|^{2(\ell-r)}\cdot\dfrac{\|G(\lambda_0;\epsilon_2,\eta_2,N)x\|_2}{|\lambda_0|^{d-r}\|x\|_2}\dfrac{\|G(\overline{\lambda_0};\eta_2,\epsilon_2,N^*)y\|_2}{|\lambda_0|^{d-r}\|y\|_2} \leq \\
&|\lambda_0|^{2(\ell-r)}\cdot 4d^2(\epsilon_2+1)^{3/2}(\eta_2+1)^{3/2}\max\{1,\max_{i=0:d}\{\|A_i\|_2^2\}\},
\end{split}
\end{align}
where the first inequality  above follows from  $\|G(\lambda_0;\epsilon_1,\eta_1,M)x\|_2\geq |\lambda_0|^{d-\ell}\|x\|_2$ and $\|G(\overline{\lambda_0};\eta_1,\epsilon_1,M^*)y\|_2\geq |\lambda_0|^{d-\ell}\|y\|_2$, and the second inequality follows from  similar arguments to the ones used  in the proofs of \eqref{eq:X1} and \eqref{eq:X2}.
Finally, observe that part (b) in Lemma \ref{lemma:relating cond numb}, implies
\begin{equation}\label{eq:aux7}
\dfrac{|w^*_\mathcal{L}\mathcal{L}^\prime(\lambda_0)z_\mathcal{L}|}{|w^*_\mathcal{R}\mathcal{R}^\prime(\lambda_0)z_\mathcal{R}|} = \dfrac{|\lambda_0|^{d-\ell}|y^*P^\prime(\lambda_0)x|}{|\lambda|^{d-r}|y^*P^\prime(\lambda_0)x|}=\dfrac{1}{|\lambda_0|^{\ell-r}}.
\end{equation}
Using the inequalities \eqref{eq:aux5}--\eqref{eq:aux7}, we obtain from \eqref{eq:aux4}
\begin{align*}
\dfrac{\mathrm{coeff\,cond}_\mathcal{R}(\lambda_0)}{\mathrm{coeff\,cond}_\mathcal{L}(\lambda_0)} \leq 8d^2(r+1)(\epsilon_2+1)^{2}(\eta_2+1)^{2} \max\{1,\max_{i=0:d}\{\|A_i\|_2^3\}\} \leq &\\
16d^3(\epsilon_2+1)^{3/2}(\eta_2+1)^{3/2}\max\{1,\max_{i=0:d}\{\|A_i\|_2^3\}\} &,
\end{align*}
which is the desired upper bound.

Assume now $|\lambda_0|\leq 1$. 
Observe that Lemma \ref{lemma:norm bound} implies $\|\mathcal{L}_0\|_2\geq 1$, since the trailing matrix coefficient $\mathcal{L}_0$ has at least one block entry equal to $I_n$. 
Then, from Lemmas \ref{lemma:norm bound} and \ref{lemma:bound L and M}, we easily obtain
\begin{align}\label{eq:aux8}
\begin{split}
\dfrac{\sum_{i=0}^r|\lambda_0|^i\|\mathcal{R}_i\|_2}{\sum_{i=0}^\ell|\lambda_0|^i\|\mathcal{L}_i\|_2} \leq  \dfrac{\sum_{i=0}^r|\lambda_0|^i\|\mathcal{R}_i\|_2}{\|\mathcal{L}_0\|_2} \leq \sum_{i=0}^r|\lambda_0|^{i}\|\mathcal{R}_i\|_2 \leq 
(r+1)\max_{i=0:r}\{\|\mathcal{R}_i\|_2\} \leq &\\
(r+1)\sqrt{(\epsilon_2+1)(\eta_2+1)}\max\{1,\max_{i=0:d}\{\|A_i\|_2\}\}.&
\end{split}
\end{align}
Next, from part (a) in Theorems \ref{thm:right eigenvectors} and \ref{thm:left eigenvectors}, we have
\begin{align}\label{eq:aux9}
\begin{split}
\dfrac{\|z_\mathcal{R}\|_2\|w_\mathcal{R}\|_2}{\|z_\mathcal{L}\|_2\|w_\mathcal{L}\|_2} = &
\dfrac{\|H(\lambda_0;\epsilon_2,\eta_2,N)x\|_2\|H(\overline{\lambda_0};\eta_2,\epsilon_2,N^*)y\|_2}{\|H(\lambda_0;\epsilon_1,\eta_1,M)x\|_2\|H(\overline{\lambda_0};\eta_1,\epsilon_1,M^*)y\|_2}\leq  \\
&\dfrac{\|H(\lambda_0;\epsilon_2,\eta_2,N)x\|_2}{\|x\|_2}\dfrac{\|H(\overline{\lambda_0};\eta_2,\epsilon_2,N^*)y\|_2}{\|y\|_2} \leq \\
&4d^2(\epsilon_2+1)^{3/2}(\eta_2+1)^{3/2}\max\{1,\max_{i=0:d}\{\|A_i\|_2^2\}\},
\end{split}
\end{align}
where the first inequality follows from the inequalities $\|H(\lambda;\epsilon_1,\eta_1,M)x\|_2\geq \|x\|_2$ and $\|H(\overline{\lambda_0};\eta_1,\epsilon_1,M^*)y\|_2\geq \|y\|_2$, and the second inequality follows from  similar arguments to the ones used  in the proofs of \eqref{eq:Y1} and \eqref{eq:Y2}.
Finally, we obtain from Lemma \ref{lemma:relating cond numb}
\begin{equation}\label{eq:aux10}
\dfrac{|w^*_\mathcal{L}\mathcal{L}^\prime(\lambda_0)z_\mathcal{L}|}{|w^*_\mathcal{R}\mathcal{R}^\prime(\lambda_0)z_\mathcal{R}|} = \dfrac{|y^*P^\prime(\lambda_0)x|}{|y^*P^\prime(\lambda_0)x|}=1.
\end{equation}
Using the inequalities \eqref{eq:aux8}---\eqref{eq:aux10} to bound \eqref{eq:aux4}, the desired upper bound readily follows.

We finally observe that the lower  bound follows from applying the just established upper bound to  $\mathrm{coeff\,cond}_\mathcal{L}(\lambda_0)/\mathrm{coeff\,cond}_\mathcal{R}(\lambda_0)$.
\end{proof}

As an immediate corollary of Theorem \ref{thm:ratio different ell-ifications}, we obtain the second main result of this work.
From the conditioning point of view, Theorem \ref{thm:main2} establishes that no block Kronecker companion form is more preferable to other block Kronecker companion form, provided we scale the polynomial $P(\lambda)$ so that  $\max_{i=0:d}\{\|A_i\|_2\}=1$.
%Notice that the Frobenius companion forms are included in this statement.

\begin{theorem}\label{thm:main2}
Let $P(\lambda)$ be an $n\times n$ matrix polynomial as in \eqref{eq:poly} of degree $d$.
Assume $d$ is divisible both by $\ell$ and $r$, and let $\mathcal{L}(\lambda)$ be a block Kronecker companion $\ell$-ification of $P(\lambda)$, and let $\mathcal{R}(\lambda)$ be a block Kronecker companion $r$-ification of $P(\lambda)$.
Assume that $P(\lambda)$ has been scaled so that $\max_{i=0:d}\{\|A_i\|_2\}=1$.
If $\lambda_0$ is a finite, nonzero and simple eigenvalue of $P(\lambda)$, then
\[
%\frac{\mathrm{norm\,cond}_\mathcal{R}(\lambda_0)}{\mathrm{norm\,cond}_\mathcal{L}(\lambda_0)}\approx
\frac{\mathrm{coeff\,cond}_\mathcal{R}(\lambda_0)}{\mathrm{coeff\,cond}_\mathcal{L}(\lambda_0)}\approx 1.
\]
In other words, under the scaling $\max_{i=0:d}\{\|A_i\|_2\}=1$ assumption, no block Kronecker companion form is much better or much worse conditioned than other block Kronecker companion form.
\end{theorem}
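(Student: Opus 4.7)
The plan is to derive Theorem \ref{thm:main2} as an immediate corollary of Theorem \ref{thm:ratio different ell-ifications}, which has already done the heavy lifting by bounding $\mathrm{coeff\,cond}_\mathcal{R}(\lambda_0)/\mathrm{coeff\,cond}_\mathcal{L}(\lambda_0)$ both above and below in terms of polynomial factors in $d, \epsilon_i, \eta_i$ multiplied by $\max\{1,\max_{i=0:d}\{\|A_i\|_2^3\}\}$.

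The first step is simply to invoke the upper and lower bounds from Theorem \ref{thm:ratio different ell-ifications} for the given pair of block Kronecker companion forms $\mathcal{L}(\lambda)$ and $\mathcal{R}(\lambda)$. Under the scaling hypothesis $\max_{i=0:d}\{\|A_i\|_2\}=1$, the quantity $\max\{1,\max_{i=0:d}\{\|A_i\|_2^3\}\}$ collapses to $1$, so the two-sided bound becomes
\[
\frac{1}{16d^3(\epsilon_1+1)^{3/2}(\eta_1+1)^{3/2}}\leq \frac{\mathrm{coeff\,cond}_\mathcal{R}(\lambda_0)}{\mathrm{coeff\,cond}_\mathcal{L}(\lambda_0)} \leq 16d^3(\epsilon_2+1)^{3/2}(\eta_2+1)^{3/2}.
\]
Since $\epsilon_i+\eta_i+1 = d/\ell_i \leq d$, the factors $(\epsilon_i+1)^{3/2}(\eta_i+1)^{3/2}$ are themselves polynomial in $d$, so the whole constant on each side is bounded by a modest polynomial in $d$. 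This is exactly what is meant by the informal assertion $\mathrm{coeff\,cond}_\mathcal{R}(\lambda_0)/\mathrm{coeff\,cond}_\mathcal{L}(\lambda_0)\approx 1$ in the sense used throughout the paper (independent of the matrix coefficients and of the specific block structures, bounded only by a modest function of the polynomial degree).

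Because the entire argument is a substitution into an already proven bound, there is essentially no obstacle; the only thing worth remarking on is that the $\approx 1$ notation in the statement should be read as ``bounded above and below by modest constants depending only polynomially on $d$,'' so the proof should make that interpretation explicit by quoting the $\mathcal{O}(u)$-style convention introduced after \eqref{eq:forward stability}. If a cleaner statement were desired one could phrase it as
\[
\frac{1}{p(d)} \leq \frac{\mathrm{coeff\,cond}_\mathcal{R}(\lambda_0)}{\mathrm{coeff\,cond}_\mathcal{L}(\lambda_0)} \leq p(d)
\]
for a polynomial $p$ of degree at most $6$, but the qualitative statement in the theorem suffices and follows in a single line from Theorem \ref{thm:ratio different ell-ifications}.
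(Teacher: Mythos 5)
Your proposal is correct and matches the paper exactly: Theorem \ref{thm:main2} is stated there as an immediate corollary of Theorem \ref{thm:ratio different ell-ifications}, obtained precisely by setting $\max_{i=0:d}\{\|A_i\|_2\}=1$ in the two-sided bound so that only the modest factors $16d^3(\epsilon_j+1)^{3/2}(\eta_j+1)^{3/2}$ remain. Your additional remark that these factors are polynomial in $d$ (since $\epsilon_j+\eta_j+1=d/\ell_j\leq d$) is the same interpretation of ``$\approx 1$'' that the paper intends.
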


\section{Numerical examples}\label{sec:experiments}

We illustrate the theory on some random and on benchmark matrix polynomials from the NLEVP collection \cite{NLEVP}.
Our experiments were performed in MATLAB 8, for which the unit
roundoff is $2^{-53} \approx 10^{-16}$.
To obtain condition numbers, we took as exact eigenvalues and eigenvectors  the ones computed in MATLAB's VPA arithmetic at 40 digit precision (except in Section \ref{sec:plasma}, which was not possible due to the large size of the problem). 
The $x$-axis in all our figures represents eigenvalue index. The eigenvalues are always sorted in increasing order of absolute value.

\subsection{Experiment 1: well-conditioned eigenvalues}
One of the main predictions of our theory is that well-conditioned eigenvalues can be computed with high relative accuracy as the eigenvalues of block Kronecker companion forms.
The goal of this experiment is to verify this prediction.
Due to the lack of software for computing eigenvalues of low, but larger than one, degree  matrix polynomials, we will focus only on companion linearizations.

We generate a random $n\times n$ matrix polynomial $P(\lambda)$ as in \eqref{eq:poly} with degree $d=3$ and size $n=30$.
We construct each matrix coefficient $A_i$ with the MATLAB line command
\[
{\tt randn(n)+sqrt(-1)*randn(n)};
\]
Then, we compute the eigenvalues of $P(\lambda)$ as the eigenvalues of the Frobenious companion form 
\begin{equation}\label{eq:exp2 Frobenius}
\mathcal{L}_1(\lambda)=\begin{bmatrix}
\lambda A_3+A_2 & A_1 & A_0 \\ -I_n &\lambda I_n & 0 \\ 0 & -I_n & \lambda I_n
\end{bmatrix},
\end{equation}
a (permuted) Fiedler pencil $L_2(\lambda)$, a (permuted) generalized Fiedler pencil $L_3(\lambda)$, and another block Kronecker pencil $L_4(\lambda)$, where
\begin{align}\label{eq:exp1 linearizations}
\begin{split}
&
\mathcal{L}_2(\lambda)=\begin{bmatrix}
\lambda A_3+A_2 & A_1 & -I_n \\ 0 & A_0 & \lambda I_n \\ -I_n & \lambda I_n & 0
\end{bmatrix},\\
& \mathcal{L}_3(\lambda)=\begin{bmatrix}
\lambda A_3+A_2 & 0 & -I_n \\ 
0 & \lambda A_1+A_0 & \lambda I_n \\
-I_n & \lambda I_n & 0
\end{bmatrix}, \quad \quad \mbox{and} \\ 
&\mathcal{L}_4(\lambda)=\begin{bmatrix}
\lambda A_3-A_2 & \lambda A_2+A_1 & -I_n \\
\lambda A_2+A_1 & -\lambda A_1+A_0 & \lambda I_n \\
-I_n & \lambda I_n & 0
\end{bmatrix}
\end{split}
\end{align}
Observe that the four pencils in \eqref{eq:exp2 Frobenius}-\eqref{eq:exp1 linearizations} are block Kronecker companion forms of the matrix polynomial $P(\lambda)=\sum_{i=0}^4 A_i\lambda^i$.

All the eigenvalues of the matrix polynomial $P(\lambda)$ are well-conditioned in the normwise sense \cite{random-poly}.
Since $\max_{i=0:3}\{\|A_i\|_2\}$ is approximately equal to 1, our theory predicts that all the eigenvalues of $P(\lambda)$ must be computed with high relative accuracy, regardless of the block Kronecker companion employed.
This is confirmed in Figure \ref{fig:forward errors}, where we plot the relative forward errors
\begin{equation}\label{eq:exp1 forward}
\dfrac{|\lambda_i-\widetilde{\lambda}_i|}{|\lambda_i|} \quad \quad \lambda_i\mbox{: exact eigenvalue,} \quad  \quad \widetilde{\lambda}_i\mbox{: computed eigenvalue},
\end{equation}
for $i=1,2,\hdots,90$, for the the four linearizations in \eqref{eq:exp2 Frobenius}-\eqref{eq:exp1 linearizations}. 
\begin{figure}[h]
\centering
\includegraphics[height=6.5cm, width=13cm]{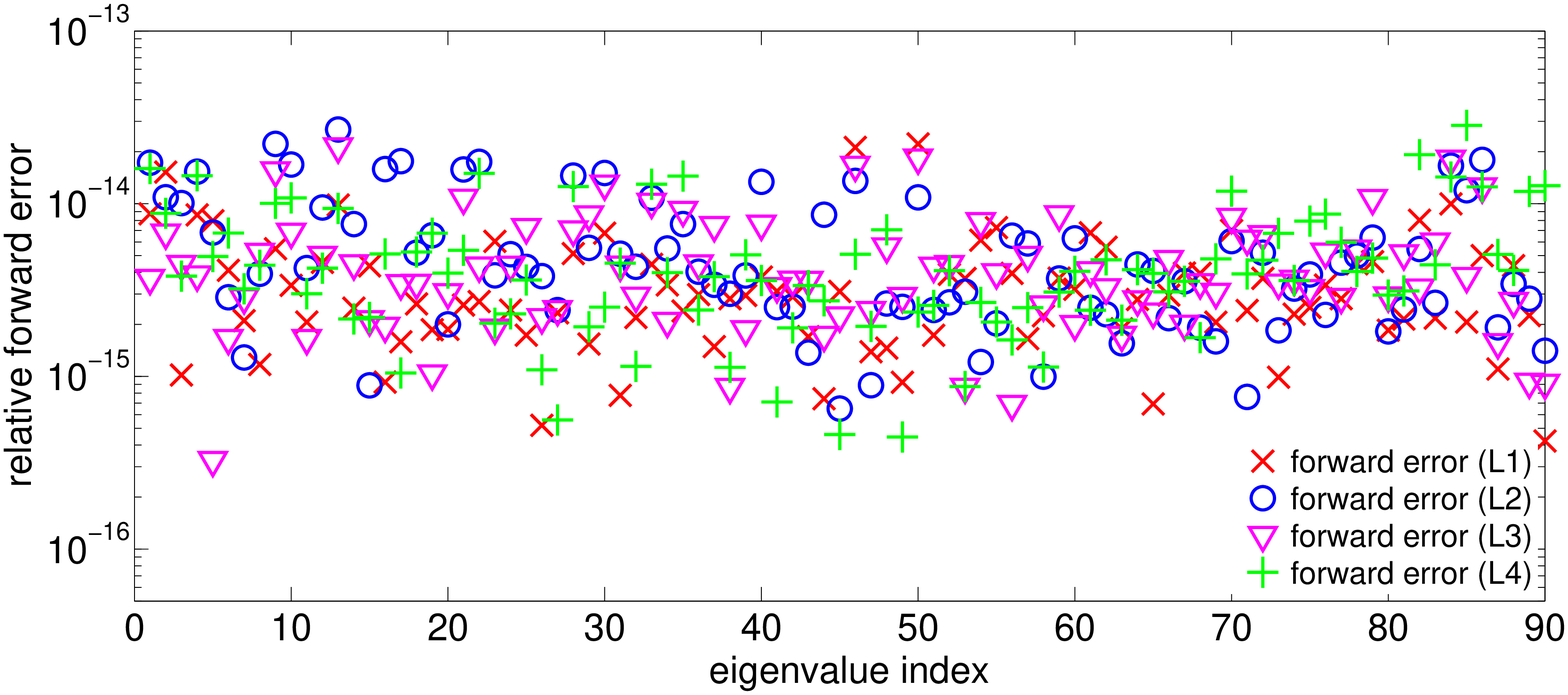}
\caption{Relative forward errors \eqref{eq:exp1 forward} of the computed eigenvalues of a random matrix polynomial $P(\lambda)$.
The eigenvalues of $P(\lambda)$ were computed as the eigenvalues of the following block Kronecker companion forms: a Frobenius companion form $\mathcal{L}_1(\lambda)$, a (permuted) Fiedler pencil $\mathcal{L}_2(\lambda)$, a (permuted) generalized Fiedler pencil $\mathcal{L}_3(\lambda)$, and a block Kronecker pencil $\mathcal{L}_4(\lambda)$.
The pencils $\mathcal{L}_1(\lambda)$, $\mathcal{L}_2(\lambda)$, $\mathcal{L}_3(\lambda)$ and $\mathcal{L}_4(\lambda)$ are as in \eqref{eq:exp2 Frobenius}-\eqref{eq:exp1 linearizations}. 
Observe that all the eigenvalues are computed with high relative accuracy, as predicted by our theory.} 
\label{fig:forward errors} 
\end{figure}

\subsection{Experiment 2: the condition of block Kronecker $\ell$-ifications relative to that of the Frobenius companion form}\label{sec:plasma}
Another key prediction of our theory is that under the scaling $\max_{i=0:d}\{\|A_i\|_2\}=1$ assumption the Frobenius  companion forms \eqref{eq:C1} and \eqref{eq:C2} are not better (or worse) conditioned than any other block Kronecker companion form.
The goal of the following three experiments is to verify this prediction.

In the first experiment, we will compare the conditioning of the block Kronecker companion forms $\mathcal{L}_2(\lambda)$, $\mathcal{L}_3(\lambda)$, $\mathcal{L}_4(\lambda)$ in \eqref{eq:exp1 linearizations} with that of the Frobenius companion form  \eqref{eq:exp2 Frobenius}.
We consider the ``plasma drift'' matrix polynomial from the NLEVP collection \cite{NLEVP}. 
This is a matrix polynomial with degree $d=3$, size $n=128$ and $\max_{i=0:d}\{\|A_i\|_2\}\approx 1.2\times 10^3$.
In Figure \ref{fig:plasma_drift1}, we plot the ratios
\begin{equation}\label{eq:exp2 ratio}
\frac{\mathrm{coeff\,cond}_{\mathcal{L}_i}(\lambda)}{\mathrm{coeff\,cond}_{\mathcal{L}_1}(\lambda)}\quad \mbox{for }i=2,3,4.
\end{equation}
for the scaled polynomial (lower figure) and the unscaled polynomial (upper figure). 
Notice that the results in Figure \ref{fig:plasma_drift1} are in complete accordance with our theory:
a potentially large or small ratios in the unscaled case, and ratios approximately equal to 1 in the scaled case. 

\begin{figure}[h]
\centering
\includegraphics[height=6.5cm, width=13cm]{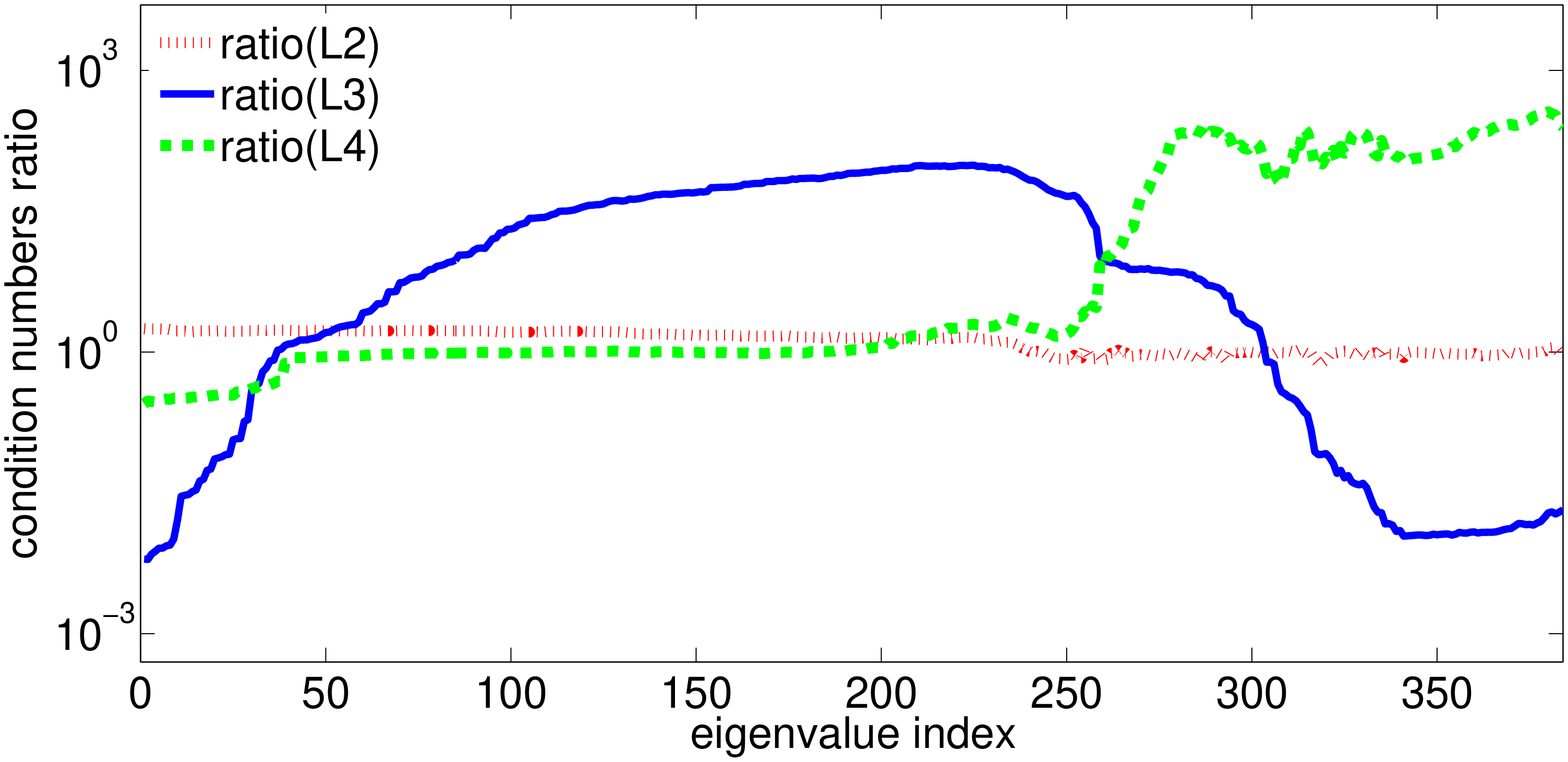}\\
\includegraphics[height=6.5cm, width=13cm]{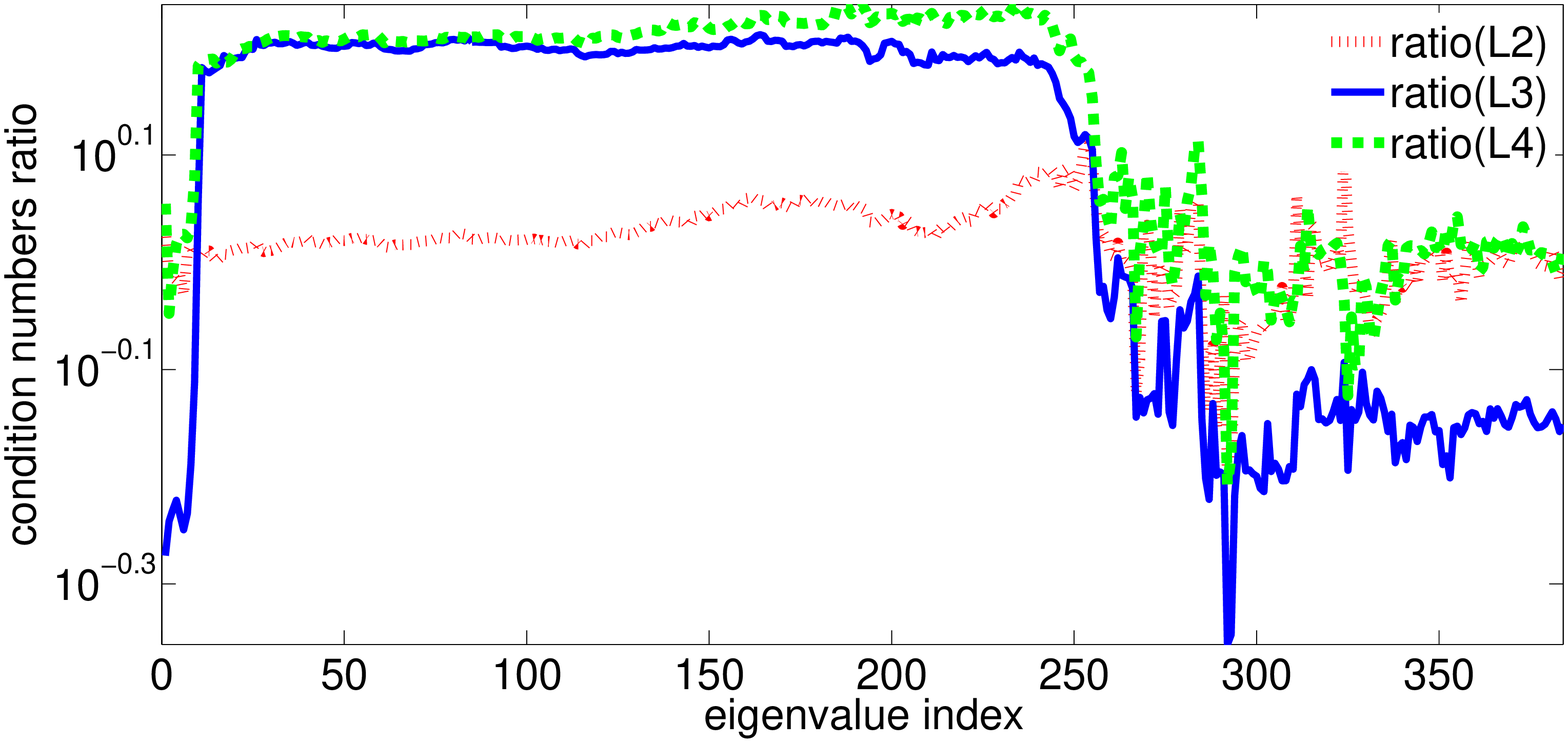}
\caption{Condition numbers ratios \eqref{eq:exp2 ratio} for the unscaled (upper figure) and scaled (lower figure) ``plasma drift'' matrix polynomial.
The pencils $\mathcal{L}_2(\lambda)$, $\mathcal{L}_3(\lambda)$ and $\mathcal{L}_4(\lambda)$ are as in \eqref{eq:exp1 linearizations}.} 
\label{fig:plasma_drift1} 
\end{figure}

In the second experiment, we will compare the conditioning of the block Kronecker quadratification (Frobenius-like quadratification) 
\begin{equation}\label{eq:exp2-Q}
Q(\lambda)=\begin{bmatrix}
\lambda^2 A_4 + \lambda A_3 & \lambda^2A_2+\lambda A_1+A_0 \\
-I_n & \lambda^2 I_n
\end{bmatrix},
\end{equation}
and the Frobenius companion form
\begin{equation}\label{eq:exp2-L}
\mathcal{L}(\lambda)=\begin{bmatrix}
\lambda A_4+A_3 & A_2 & A_1 & A_0 \\
-I_n & \lambda I_n & 0 & 0 \\
0 & -I_n & \lambda I_n & 0 \\
0 & 0 & -I_n & \lambda I_n 
\end{bmatrix},
\end{equation}
both associated with a matrix polynomial of degree 4.
We will consider the ``Orr-Sommerfeld'' matrix polynomial from the NLEVP collection \cite{NLEVP}.
This polynomial has degree $d=4$, size $n=64$, and $\max_{i=0:d}\{\|A_i\|_2\}\approx 2^{12}$.
In Figure \ref{fig:orr}, we plot the ratio
\begin{equation}\label{eq:exp2 ratio2}
\frac{\mathrm{coeff\,cond}_{\mathcal{L}}(\lambda_0)}{\mathrm{coeff\,cond}_{\mathcal{Q}}(\lambda_0)},
\end{equation}
for the scaled polynomial (red dashed line) and the unscaled polynomial (blue solid line). 
Figure \ref{fig:orr} confirms the prediction of our theory: scaling the polynomial guarantees the ratio \eqref{eq:exp2 ratio2} to be moderate. 
\begin{figure}[h]
\centering
\includegraphics[height=6.5cm, width=13cm]{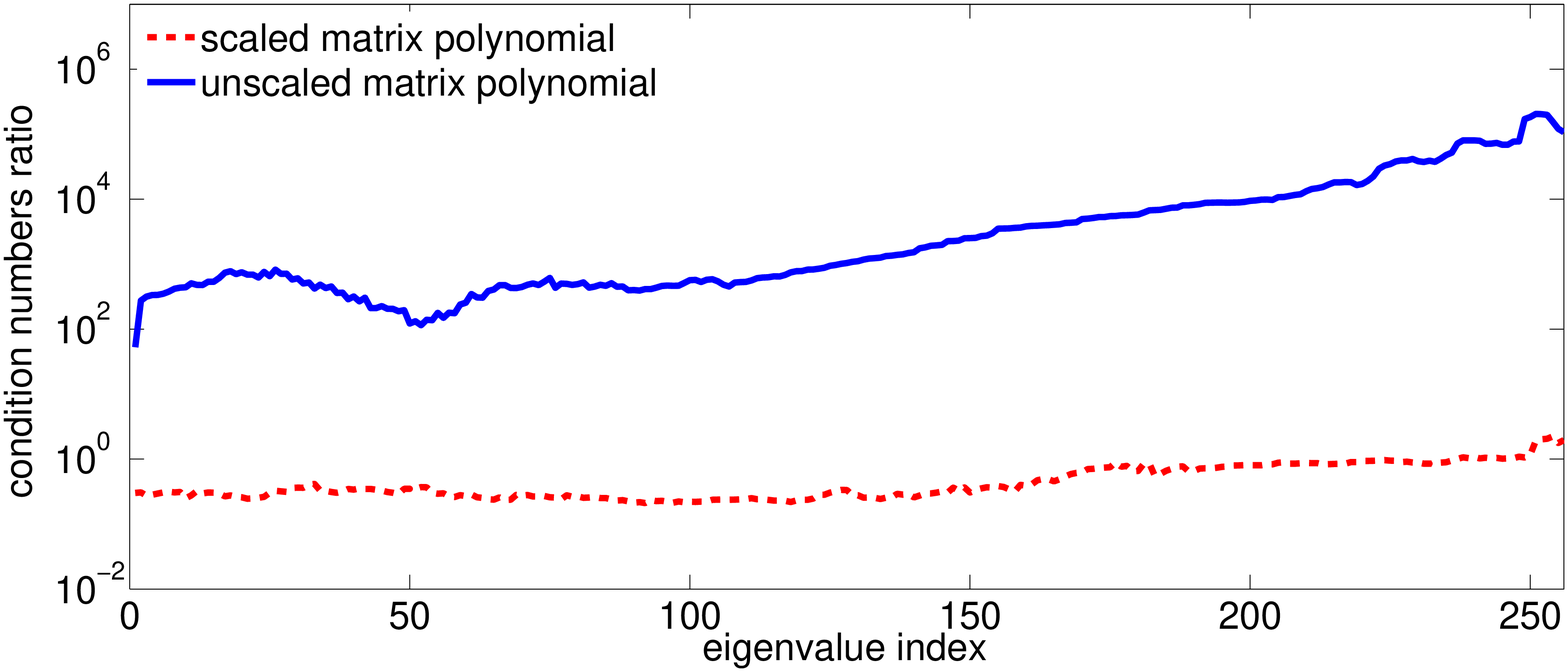}
\caption{Condition numbers ratio \eqref{eq:exp2 ratio2}  for the unscaled (blue solid line) and scaled (red dashed line) ``Orr-Sommerfeld'' matrix polynomial.} 
\label{fig:orr} 
\end{figure}

In the last experiment of this section, we consider a random matrix polynomial $P(\lambda)$ as in \eqref{eq:poly} with degree $d=6$, size $n=10$, and with badly-scaled matrix coefficients.
This matrix polynomial is constructed as follows:
\begin{align*}
&A_0 = {\tt randn(n)+sqrt(-1)*randn(n);} \\
& A_1 = {\tt 1e3*(randn(n)+sqrt(-1)*randn(n));} \\ &A_2 = {\tt randn(n)+sqrt(-1)*randn(n);} \\
&A_3 = {\tt 1e4*(randn(n)+sqrt(-1)*randn(n));} \\
&A_4 = {\tt 1e4*(randn(n)+sqrt(-1)*randn(n));} \\ &A_5  = {\tt 1e2*(randn(n)+sqrt(-1)*randn(n));} \\
&A_6 = {\tt randn(n)+sqrt(-1)*randn(n);}
\end{align*}
In the experiment, we study the conditioning of the following three block Kronecker companion $\ell$-ifications: the linearization (1-ification)
\begin{equation}\label{eq:exp2bis-F}
\mathcal{F}(\lambda) = \left[ \begin{array}{cccccc}
\lambda A_6 & \lambda A_5 & \lambda A_4 & -I_n & 0 & 0 \\
0 & 0 & \lambda A_3 & \lambda I_n & -I_n & 0 \\
0 & 0 & \lambda A_2 & 0 & \lambda I_n & -I_n \\
0 & 0 & \lambda A_1+A_0 & 0 & 0 & \lambda I_n \\ 
-I_n & \lambda I_n & 0 & 0 & 0 & 0 \\
0 & -I_n & \lambda I_n & 0 & 0 & 0
\end{array}\right],
\end{equation}
the quadratification (2-ification)
\begin{equation}\label{eq:exp2bis-Q}
\mathcal{Q}(\lambda) = \left[\begin{array}{ccc}
\lambda^2 A_6+\lambda A_5 & A_2 & -I_n \\
\lambda^2 A_4+\lambda A_3 & \lambda A_1 + A_0 & \lambda^2 I_n \\ 
-I_n & \lambda^2 I_n & 0
\end{array}\right],
\end{equation}
and the cubification (3-ification)
\begin{equation}\label{eq:exp2bis-C}
\mathcal{C}(\lambda) = \left[\begin{array}{cc}
\lambda^3 A_6+\lambda^2A_5+\lambda A_4 & \lambda^3A_3+\lambda^2A_2+\lambda A_1+A_0 \\  -I_n & \lambda^3 I_n
\end{array}\right],
\end{equation}
relative to the conditioning of the Frobenius companion form
\begin{equation}\label{eq:exp2bis-L}
\mathcal{L}(\lambda) =
\begin{bmatrix}
\lambda A_6+A_5 & A_4 & A_3 & A_2 & A_1 & A_0 \\
-I_n & \lambda I_n & 0 & 0 & 0 & 0 \\
0 & -I_n & \lambda I_n & 0 & 0 & 0 \\
0 & 0 & -I_n & \lambda I_n & 0 & 0 \\
0 & 0 & 0 & -I_n & \lambda I_n & 0 \\
0 & 0 & 0 & 0 & -I_n & \lambda I_n 
\end{bmatrix}.
\end{equation}
In Figure \ref{fig:ell-ifications}, we plot the ratios
\begin{equation}\label{eq:exp2 ratio3}
\frac{\mathrm{coeff\,cond}_{\mathcal{F}}(\lambda_0)}{\mathrm{coeff\,cond}_{\mathcal{L}}(\lambda_0)}, \quad \quad \frac{\mathrm{coeff\,cond}_{\mathcal{Q}}(\lambda_0)}{\mathrm{coeff\,cond}_{\mathcal{L}}(\lambda_0)} \quad \quad \mbox{and} \quad \quad \frac{\mathrm{coeff\,cond}_{\mathcal{C}}(\lambda_0)}{\mathrm{coeff\,cond}_{\mathcal{L}}(\lambda_0)},
\end{equation}
for the scaled polynomial (lower figure) and the unscaled polynomial (upper figure). 
Figure \ref{fig:orr} validates our theory: scaling the polynomial makes the ratios \eqref{eq:exp2 ratio3} close to 1, regardless of how badly-scaled is the polynomial (i.e., regardless of how small is the quantity $\min\{\|A_0\|_2,\|A_d\|_2\}$). 

\begin{figure}[h]
\centering
\includegraphics[height=6.5cm, width=13cm]{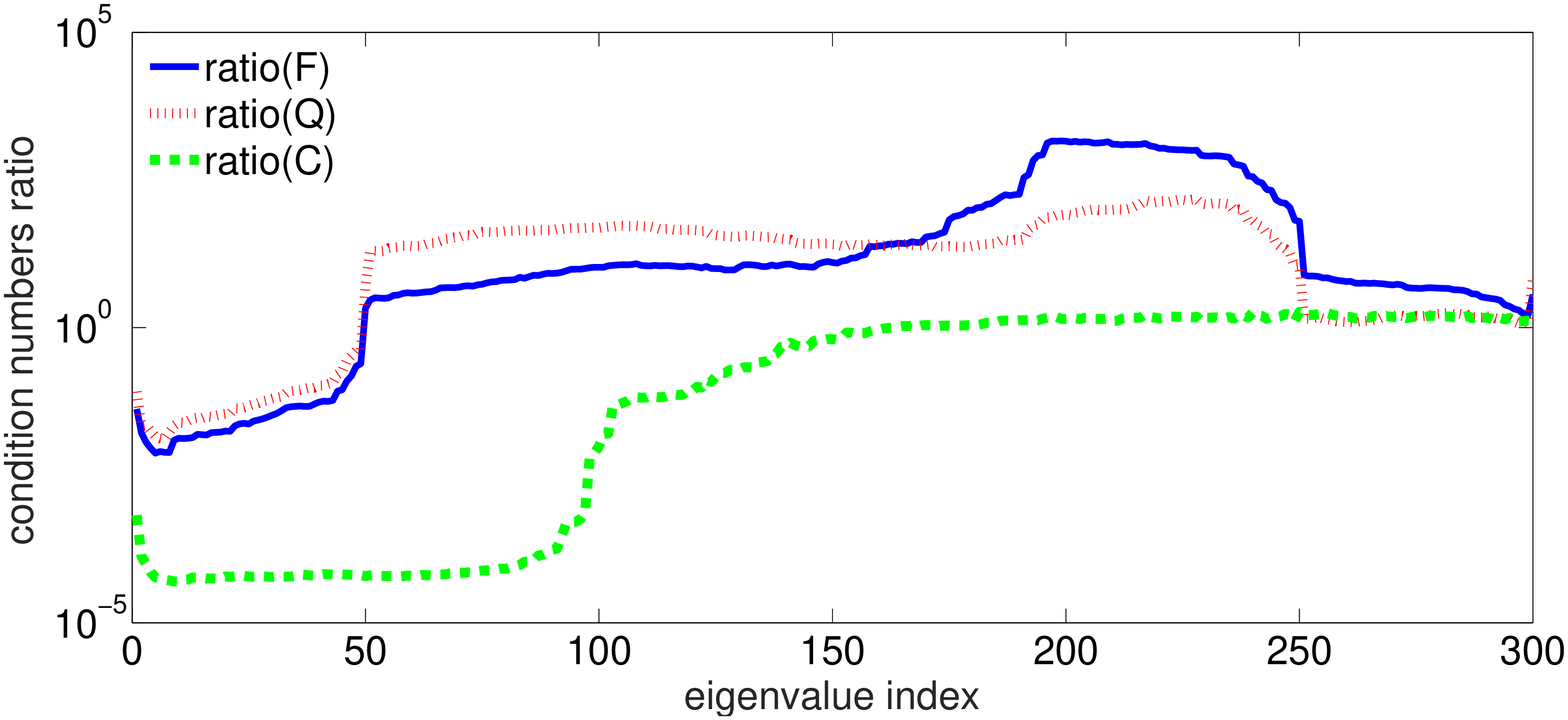}\\
\includegraphics[height=6.5cm, width=13cm]{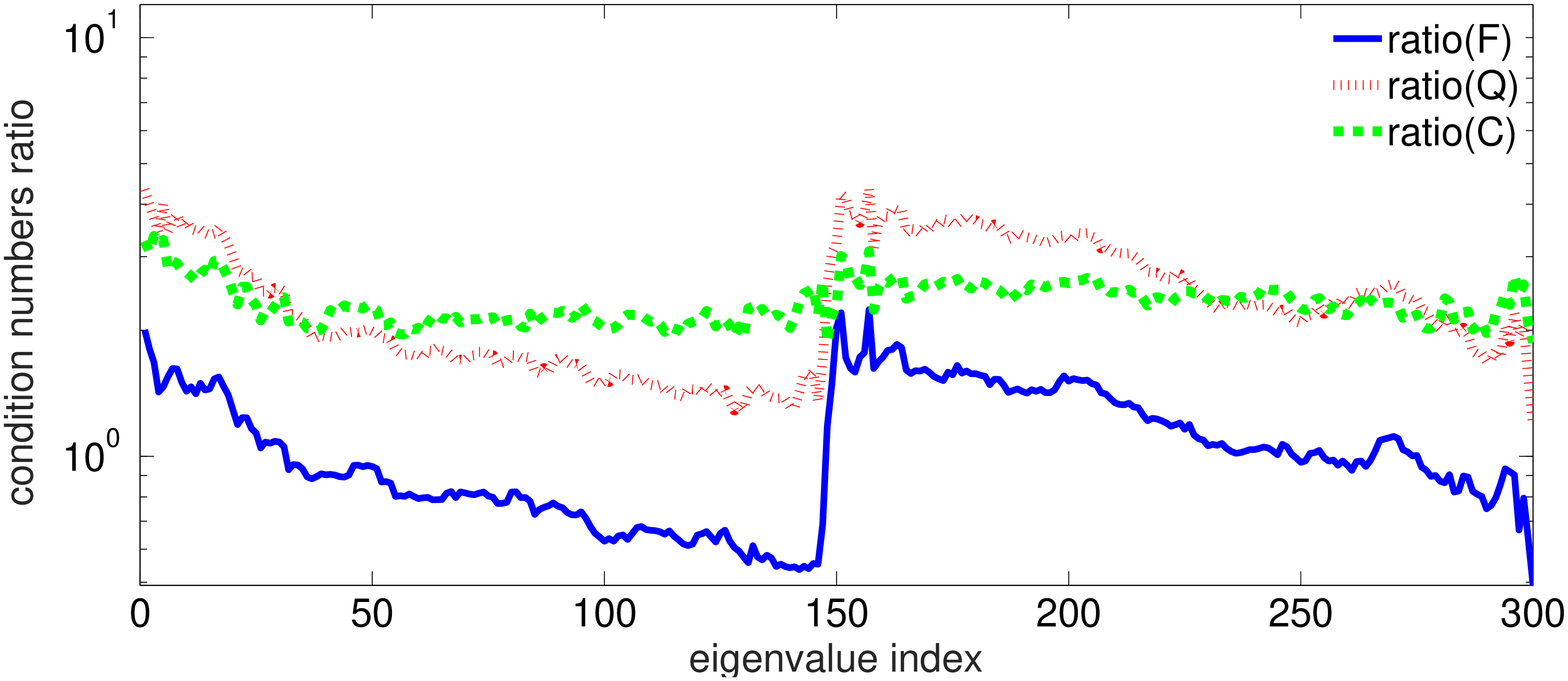}
\caption{Condition numbers ratios \eqref{eq:exp2 ratio3} for a random matrix polynomial with badly scaled matrix coefficients.
Results for the unscaled polynomial are in the upper figure, and results for the scaled polynomial are in the lower figure. The matrix polynomials $\mathcal{F}(\lambda)$, $\mathcal{Q}(\lambda)$ and $\mathcal{C}(\lambda)$ are as in \eqref{eq:exp2bis-F}--\eqref{eq:exp2bis-C}.} 
\label{fig:ell-ifications} 
\end{figure}

\subsection{Experiment 3: the conditioning of block Kronecker companion forms relative to that of the polynomial}

Ideally, we would like the block Kronecker companion form $\mathcal{L}(\lambda)$ that we use to solve a PEP to be as well conditioned as the original polynomial $P(\lambda)$.
Our theory predicts  that the coefficientwise conditioning of $\mathcal{L}(\lambda)$ is within a factor
\begin{equation}\label{eq:rho factor}
\rho(P):=\frac{\max_{i=0:d}\{\|A_i\|_2^3\}}{\min\{\|A_0\|_2,\|A_d\|_2\}}
\end{equation}
of the coefficientwise conditioning of $P(\lambda)$.
Hence, if we scale the matrix polynomial so that $\max_{i=0:d}\{\|A_i\|_2\}=1$, then $\mathcal{L}(\lambda)$ and $P(\lambda)$ are guaranteed to have similar condition numbers, provided $\min\{\|A_0\|_2,\|A_d\|_2\}$ is not too small. 
The goal of the following two examples is to illustrate this fact, and to show the benefits of scaling the polynomial. 

In the first experiment, we consider again the ``plasma drift'' matrix polynomial from the NLEVP collection \cite{NLEVP}, the Frobenius companion form $\mathcal{L}_1(\lambda)$ in \eqref{eq:exp2 Frobenius} and the block Kronecker companion forms $\mathcal{L}_2(\lambda)$, $\mathcal{L}_3(\lambda)$ and $\mathcal{L}_4(\lambda)$ in \eqref{eq:exp1 linearizations}.
In Figure \ref{fig:plasma_drift2}, we plot the ratios
\begin{equation}\label{eq:exp3 ratio}
\frac{\mathrm{coeff\,cond}_{\mathcal{L}_i}(\lambda_0)}{\mathrm{coeff\,cond}_P(\lambda_0)} \quad \mbox{for i}=1,2,3,4,
\end{equation}
for the scaled polynomial (lower figure) and the unscaled polynomial (upper figure).
The unscaled matrix polynomial $\rho(P)$ factor \eqref{eq:rho factor} of  order $10^8$,  which explains the large ratios in Figure \ref{fig:plasma_drift2}.
Notice how scaling brings a considerable improvement in the conditioning of the four block Kronecker linearizations. 
This improvement is predicted by our theory, since the scaled polynomial has a $\rho(P)$ factor \eqref{eq:rho factor} approximately equal to 100.

\begin{figure}[h]
\centering
\includegraphics[height=6.5cm, width=13cm]{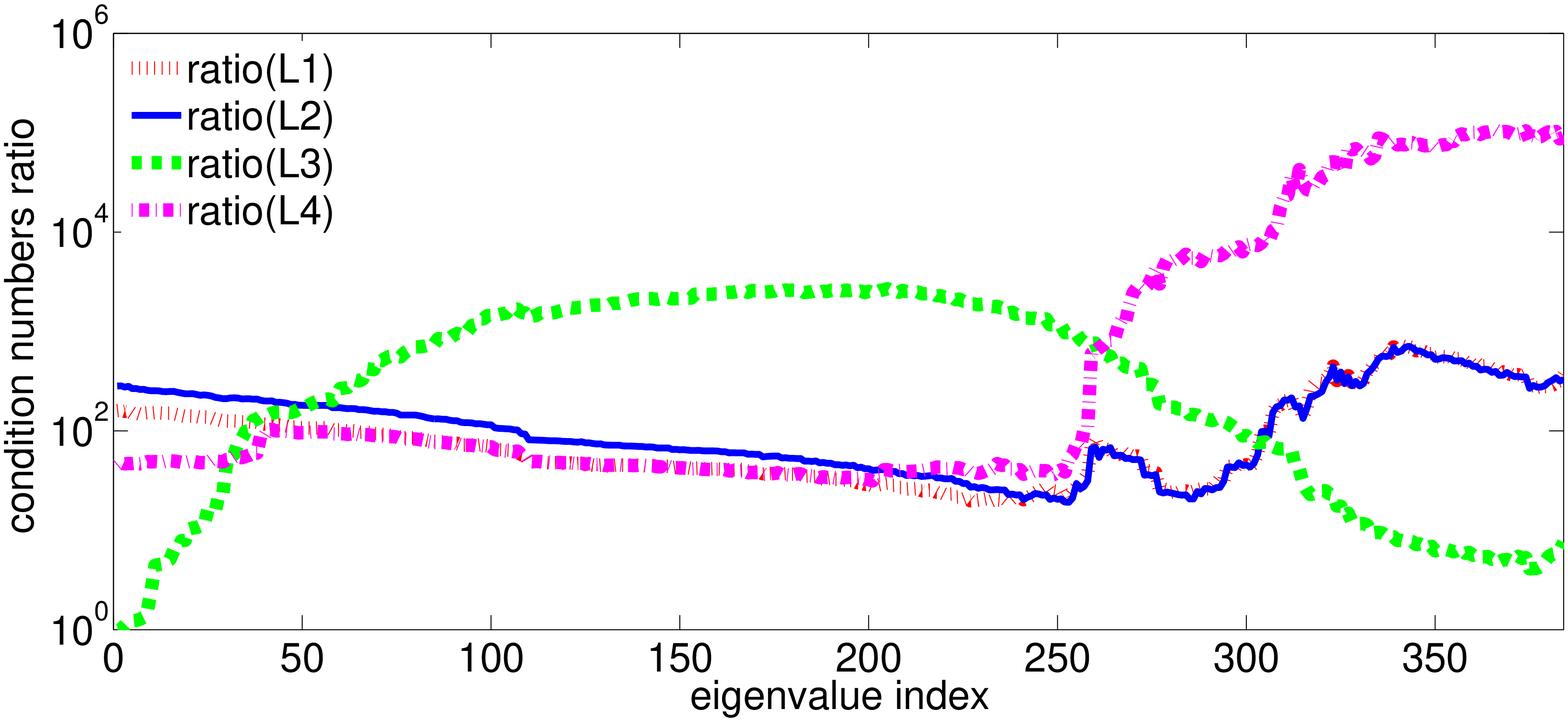}\\
\includegraphics[height=6.5cm, width=13cm]{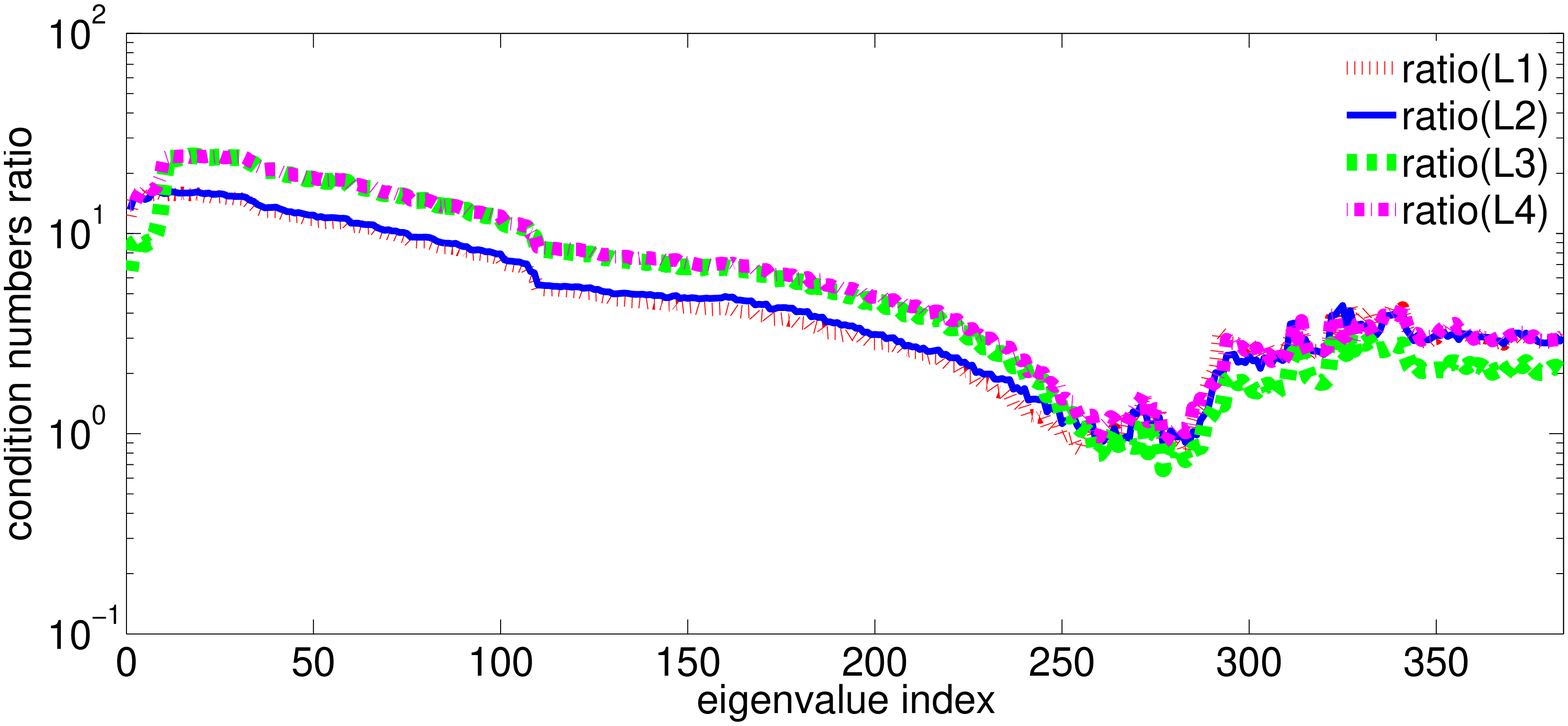}
\caption{Condition numbers ratios \eqref{eq:exp3 ratio} for the unscaled (upper figure) and scaled (lower figure) ``plasma drift'' matrix polynomial.
The pencils  $\mathcal{L}_1(\lambda)$, $\mathcal{L}_2(\lambda)$, $\mathcal{L}_3(\lambda)$ and $\mathcal{L}_4(\lambda)$ are as in \eqref{eq:exp2 Frobenius}-\eqref{eq:exp1 linearizations}.} 
\label{fig:plasma_drift2} 
\end{figure}

In the second experiment, we consider again the ``Orr-Sommerfeld'' matrix polynomial from the NLEVP collection \cite{NLEVP}, the Frobenius companion linearization $\mathcal{L}(\lambda)$ in \eqref{eq:exp2-L}, and the Frobeius-like quadratification $\mathcal{Q}(\lambda)$ in \eqref{eq:exp2-Q}. 
In Figure \ref{fig:orr2}, we plot the ratios
\begin{equation}\label{eq:exp3-ratio2}
\frac{\mathrm{coeff\,cond}_{\mathcal{L}}(\lambda_0)}{\mathrm{coeff\,cond}_P(\lambda_0)} \quad \quad \mbox{and} \quad \quad 
\frac{\mathrm{coeff\,cond}_{\mathcal{Q}}(\lambda_0)}{\mathrm{coeff\,cond}_P(\lambda_0)},
\end{equation}
for the scaled polynomial (lower figure) and the unscaled polynomial (upper figure).
We observe that both the scaled and unscaled matrix polynomials have very large factors \eqref{eq:rho factor}, which explains the large ratios in Figure \ref{fig:orr2}.
However, notice the significant improvement  that scaling the polynomial brings on the conditioning of the Frobenius companion form.
\begin{figure}[h]
\centering
\includegraphics[height=6.5cm, width=13cm]{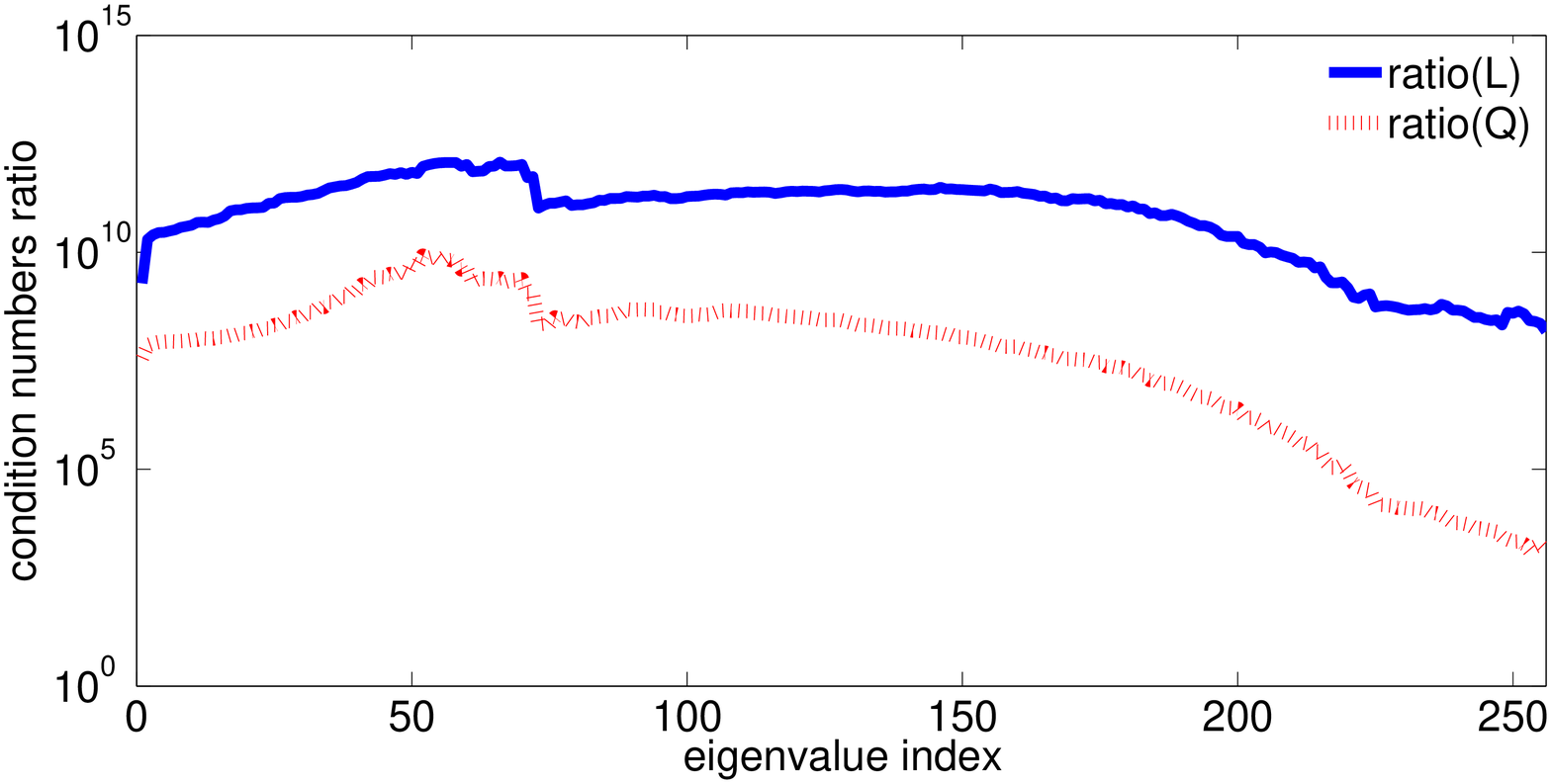}\\
\includegraphics[height=6.5cm, width=13cm]{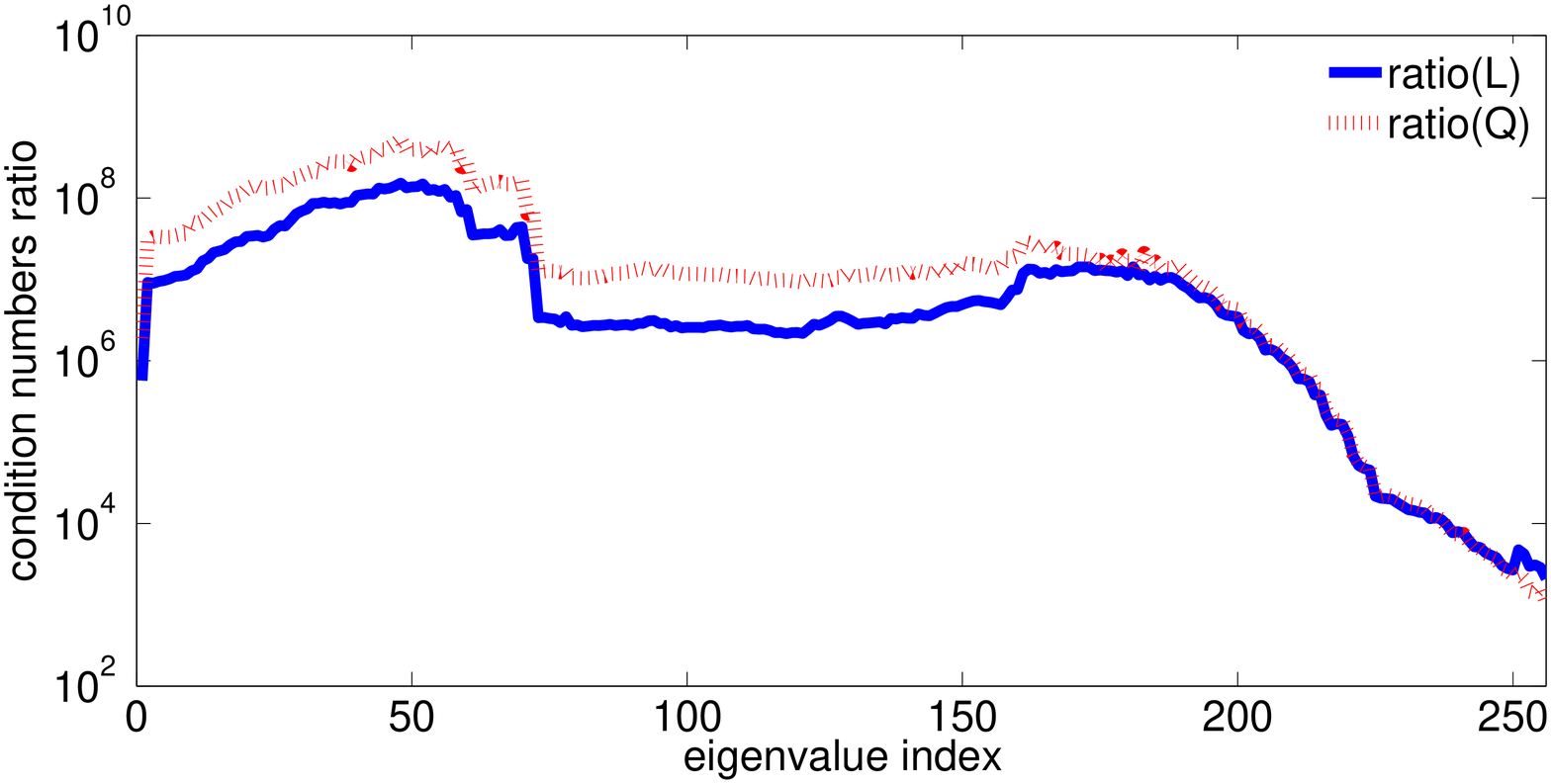}
\caption{Condition numbers ratio \eqref{eq:exp3-ratio2} for the ``Orr-Sommerfeld'' matrix polynomial.
Results for the unscaled polynomial are in the upper figure, and results for the scaled polynomial are in the lower figure.
The matrix polynomials $\mathcal{Q}(\lambda)$ and $\mathcal{L}(\lambda)$ are as in \eqref{eq:exp2-Q}-\eqref{eq:exp2-L}.} 
\label{fig:orr2} 
\end{figure}

\section{Conclusions}

Several recent papers have systematically addressed the task of broadening the menu of available $\ell$-ifcations \cite{Robol,DTDM10,FFP,ell-ifications-DPV,4m-vspace}.
Unfortunately, this explosion of new classes of $\ell$-ifications has not been followed by the corresponding analyses of the influence of the $\ell$-ification process on the accuracy and stability of the computed eigenvalues and/or eigenvectors.
Only  the influence of some classes of linearizations (the Frobenius companion forms and a block tridiagonal linearization \cite{tridiagonal}, linearizations in the $\mathbb{DL}(P)$ vector space \cite{DTDM10,conditioning,BackErrors,GoodVibrations,bivariate}), and Fiedler matrices \cite{Fiedler1,Fiedler2}) has been  studied in the last years.
In this work, we have started a systematic study of the numerical influence of $\ell$-ifications.
Focusing on the recent family of block Kronecker companion forms \cite{DLPV}, we have analyzed the influence of $\ell$-ifications on the conditioning of the polynomial eigenvalue problem.
Our findings lead to two main conclusions.
First, block Kronecker companion forms are  about as well conditioned as the polynomial itself, provided we scale $P(\lambda)$ so that $\max_{i=0:d}\{\|A_i\|_2\}=1$, and the quantity $\min\{\|A_0\|_2,\|A_d\|_2\}$ is not too small. 
Second, under the scaling assumption  $\max_{i=0:d}\{\|A_i\|_2\}=1$,  any block Kronecker companion form, regardless of its degree or block structure, is about as well-conditioned as the Frobenius companion forms.
We hove that the theoretical findings of this work will help to gain confidence on companion forms other than the Frobenius companion forms, and that this will lead to new algorithmic developments.

%More specifically, we are interested in studying eigenvalue condition numbers and backward errors of computed eigenpairs for the family  known as \emph{block Kronecker $\ell$-ifications} \cite{ell-ifications-DPV}.
%We focus on this family because they have the additional property of having left and right eigenvectors closely related with those of the original matrix polynomial.
%This property is not implied by the definition of a strong $\ell$-ification, and will play a key role in all of our results.

\end{document}